\newcommand{\e}{\varepsilon}
\newcommand{\cA}{\mathcal{A}}
\newcommand{\cC}{\mathcal{C}}
\newcommand{\R}{\mathbb{R}}
\newcommand{\C}{\mathbb{C}}
\newcommand{\Z}{\mathbb{ Z}}
\newcommand{\I}{\mathcal{I}}
\newcommand{\J}{\mathcal{J}}
\newcommand{\tf}{\tfrac{1}{2}}
\newcommand{\X}{X}
\newcommand{\bG}{{\bf G}}
\newcommand{\bZ}{{\bf Z}}
\newcommand{\oh}{\mbox{$\frac{1}{2}$}}
\newtheorem{prop}{Proposition}[section]
\newtheorem{thm}[prop]{Theorem}
\newtheorem{cor}[prop]{Corollary}
\newtheorem{lem}[prop]{Lemma}
\newtheorem{defn}{Definition}
\newtheorem{conj}{Conjecture}
\numberwithin{equation}{section}
\begin{document}

\begin{frontmatter}[classification=text]
%% EDITOR: this will force the keywords to appear right after the Abstract.
%%   If the abstract is too long and would force the keywords off the
%%   front page, please comment out % [classification=text] above
%%   This way the keywords will be floated on the bottom of the first page
%%   even though the Abstract spills over to the next page.

%%% AUTHOR: Title goes here.  This line is optional.  You must use it
%%   if title has footnote attached or requires nontrivial typesetting,
%%   e.g., inclusion of linebreaks to force nice layout.
\title{The sixth moment of the Riemann zeta function and  ternary additive divisor sums} %% please capitalize all significant words

%%% AUTHOR:
%%% List all authors. If you wish, place grant acknowledgements in \thanks.
%%% In brackets include a short tag for each author.
\author[ng]{Nathan Ng\thanks{The research for this article was supported by an
NSERC Discovery Grant.}}
%\author[johan]{Johan H{\aa}stad\thanks{Supported by...}}
%\author[laci]{L\'aszl\'o Lov\'asz\thanks{Supported by...}}
%\author[andy]{Andrew Chi-Chih Yao\thanks{Supported by...}}

%%% AUTHOR: Abstract goes here
\begin{abstract}
Hardy and Littlewood initiated the study of the $2k$-th moments of the Riemann zeta function on the critical line. 
In 1918 Hardy and Littlewood established an asymptotic formula for the second moment and in 1926 
Ingham established an asymptotic formula for the fourth moment.  Since then no other moments have been 
asymptotically evaluated.  In this article we study the sixth moment of the zeta function on the critical line. 
We show that a conjectural formula for a certain family of ternary additive divisor sums implies an asymptotic formula
with power savings error term
for the sixth moment of the Riemann zeta function on the critical line.  This provides a rigorous proof for a heuristic argument of Conrey and Gonek \cite{CG}.  Furthermore, this gives some evidence towards
a conjecture of Conrey, Keating, Farmer, Rubinstein, and Snaith \cite{CFKRS} on shifted moments of the Riemann
zeta function.  In addition, this improves
on a theorem of Ivic \cite{Iv2}, who obtained an upper bound for the the sixth moment of the zeta function, based on the assumption of a conjectural formula for correlation sums of the triple divisor function. 
\end{abstract}
\end{frontmatter}

%%% AUTHOR: body of paper starts here
%\section{Introduction}
% The body of your paper goes here~\cite{cilleruelo}.

\section{Introduction}

The $2k$-th moment of the Riemann zeta function is 
\begin{equation}
   \label{IkT}
   I_{k}(T) = \int_{0}^{T} |\zeta(\tfrac{1}{2}+it)|^{2k} dt,
\end{equation}
where $\zeta$ denotes the Riemann zeta function and $k > 0$.  
This article concerns the behaviour of \eqref{IkT} in the case $k=3$. 
Hardy and Littlewood initiated the study of the moments \eqref{IkT}.  Their interest in these moments
arose from the relationship of the $I_k(T)$ to the Lindel\"{o}f hypothesis, which asserts that for any $\varepsilon >0$
one has 
$|\zeta(\tfrac{1}{2}+it)| \ll_{\varepsilon} t^{\varepsilon}$.  In fact, the Lindel\"{o}f hypothesis 
is equivalent to the statement, for any $\varepsilon >0$, $I_k(T) \ll_{\varepsilon} T^{1+\varepsilon}$
for all $k \in \mathbb{N}$.  
The motivation for studying the moments $I_k(T)$ is that it seems that 
it might be easier to obtain an average bound of $\zeta(\frac{1}{2}+it)$ rather than a pointwise bound. 
In 1918, Hardy and Littlewood \cite{HL} proved that 
\begin{equation}
 \label{I1}
  \int_{0}^{T} |\zeta(\tfrac{1}{2}+it)|^{2} dt \sim T(\log T) 
\end{equation}
and in 1926 Ingham \cite{In} proved that 
\begin{equation}
\label{I2}
  \int_{0}^{T} |\zeta(\tfrac{1}{2}+it)|^{4} dt \sim \frac{T}{2 \pi^2} (\log T)^4. 
\end{equation}
To date these are the only asymptotic results established for $I_k(T)$. 
In 1996, Conrey and Ghosh \cite{CGh} conjectured that 
\begin{equation}
  \label{I3}
  \int_{0}^{T} |\zeta(\tfrac{1}{2}+it)|^{6} dt \sim \frac{42 a_3}{9!}T(\log T)^9
\end{equation}
and in 1998,  Conrey and Gonek  \cite{CG} conjectured that
\begin{equation}
  \label{I4}
  \int_{0}^{T} |\zeta(\tfrac{1}{2}+it)|^{8} dt \sim \frac{24024 a_4}{16!}T(\log T)^{16}
\end{equation}
for certain specific constants $a_3$ and $a_4$ (see \eqref{ak} below).
In 1998, 
Keating and Snaith \cite{KS} conjectured  for all positive integers $k$ that
\begin{equation}
  \label{ksasymp}
I_k(T) \sim \frac{g_k a_k}{(k^2)!} T (\log T)^{k^2}
\end{equation}
where 
\begin{equation*}
  \label{gk}
  g_k =  k^2! \prod_{j=0}^{k-1} \frac{ j!}{(k+j)!}  
\end{equation*}
and
\begin{equation}
  \label{ak}
  a_k = \prod_{p} \Big( 1-\frac{1}{p} \Big)^{k^2} \sum_{m=0}^{\infty} \Big( \frac{\Gamma(m+k)}{m! \Gamma(k)} \Big)^2 p^{-m}.
\end{equation}
Note that \eqref{ksasymp} agrees with \eqref{I1}, \eqref{I2}, \eqref{I3}, and \eqref{I4}.  

The formulae \eqref{I1} and \eqref{I2} have been refined to asymptotic formulae with 
error terms admitting power savings.  
In 1926, Ingham \cite{In} showed that 
\begin{equation} \label{I1P}
   I_1(T) = T\mathcal{P}_1(\log T) + O(T^{\frac{1}{2}+\varepsilon}), \text{ for any } \varepsilon >0, 
\end{equation}
and in 1979, Heath-Brown  \cite{HB} showed that 
\begin{equation} \label{I2P}
  I_2(T) = T\mathcal{P}_4(\log T) + O(T^{\frac{7}{8}+\varepsilon}), \text{ for any } \varepsilon >0, 
\end{equation}
where $\mathcal{P}_1,\mathcal{P}_4$ are polynomials of degrees 1 and 4 respectively.  
The error term in \eqref{I1P} has been improved numerous times and the current record is due 
to Bourgain and Watt \cite{BW} who showed $O(T^{\frac{1515}{4816}+\varepsilon})$. 
Zavorotny\u{\i} \cite{Zav}  improved the error term in   \eqref{I2P} to $O(T^{\frac{2}{3}+\varepsilon})$ and this was improved to 
$O(T^{\frac{2}{3}} (\log T)^{8})$  (see \cite{Mo}, \cite{IM}).  

Although the asymptotic \eqref{ksasymp} remains open for $k \ge 3$, there are a number of results providing upper and lower bounds.
Ramachandra \cite{Ra1},\cite{Ra2}  established that $I_{k}(T) \gg_{k} T (\log T)^{k^2}$ for positive integers $2k$ and this was extended to rational $k$ by 
Heath-Brown \cite{HB2}.  Further, assuming the Riemann hypothesis, this was established for all real $k > 0$ by Ramachandra \cite{Ra2},\cite{Ra3} and 
independently Heath-Brown \cite{HB2}. 
Recently, Radziwi\l\l  \, and Soundararjan \cite{RS} obtained the correct lower bound unconditionally for all $k >0$.  In 2008, Soundararajan \cite{S2} showed 
that on the Riemann hypothesis that $I_{k}(T) \ll_k T (\log T)^{k^2+\varepsilon}$, for any $\varepsilon >0$.   Building on this work
and introducing a number of new ideas, Harper \cite{Ha} showed that the Riemann hypothesis implies $I_{k}(T) \ll_k T (\log T)^{k^2}$.
Prior to Harper's work, Radziwi\l\l  \, \cite{R} obtained this bound for $k < 2.18$, assuming the Riemann hypothesis. 

In order to study the mean value $I_k(T)$ it is convenient to consider the shifted moments
\begin{equation}
   \label{IIJT}
    I_{\I,\J}(\omega) = 
    \int_{-\infty}^{\infty}
   \Big( \prod_{j=1}^{k}
     \zeta(\oh+a_j +it)
    \zeta(\oh+b_j -it) \Big)
  \omega(t)dt
\end{equation}
where $\I = \{ a_1, \ldots , a_k \}$ and  $\J = \{ b_1, \ldots, b_k \}$ are $k$-tuples of complex numbers,
while $\omega$ is a suitable smooth function with support $[\alpha,\beta]$
satisfying $T \ll \alpha \ll \beta-\alpha \ll T$. 
The elements of $\I$ and $\J$ are called shifts.  Observe that 
$I_k(T) = I_{\{0, \ldots, 0 \}, \{ 0, \ldots, 0 \}}(\omega) 
 \text{ where } \omega = \mathds{1}_{[0,T]}(t)$
 \footnote{ $\mathds{1}_B(t)$ denotes the indicator function corresponding to $B \subset \mathbb{R}$.}.  It turns out that it can be technically simpler to study $ I_{\I,\J}(\omega)$
rather than $I_k(T)$.   It appears that this approach was first used by Ingham \cite{In} in the case $|\I|=|\J|=1$.
What the shifts $\I$ and $\J$ do is that they make the main term easier to compute.   
The main term of $ I_{\I,\J}(\omega)$ may be written as a certain contour integral of a meromorphic function.
If the shifts in $\I$ and $\J$ are distinct, then the poles of this meromorphic function are simple and thus are simpler to compute.  On the other hand, the shifts introduce a certain combinatorial problem.

 Conrey et al. \cite{CFKRS}  developed a conjecture for shifted moments
of the Riemann zeta function. This shall be described shortly.    
We now explain the conjecture of \cite{CFKRS} for this mean value, but we shall follow
the notation of \cite{HY}. In order to do this, we first define several functions. 

\begin{defn}  \label{zetaX} Let $X$ be a finite multiset of complex numbers.  We define the arithmetic function
$\sigma_{X}(n)$ to be the coefficient of $n^{-s}$ in the Dirichlet series $\zeta_X(s)$, defined by 
$\zeta_X(s):=\prod_{x \in X} \zeta(s+x)$.
In other words, if $X = \{x_1, \ldots, x_k \}$ then $
  \sigma_{X}(n) = \sum_{n_1 \cdots n_k=n} n_{1}^{-x_1} \cdots n_{k}^{-x_k}$.   
\end{defn}
Observe that 
\begin{equation*}
\begin{split}
   \zeta_{\{0, \ldots, 0\}}(s) & =\zeta(s)^k \text{ and } \\
   \sigma_{\{0, \ldots, 0\}}(n) & = d_k(n) = \sum_{d_1 \cdots d_k=n} 1 \end{split}
\end{equation*}
is the $k$-th divisor function 
where $k=    \# \{0,\ldots, 0 \}$.
Thus if, the elements of $X$ are close
to zero and $k=\#X$, then $\zeta_X(s)$ is approximately  $\zeta^k(s)$
and $\sigma_{X}(n)$ is approximately $d_k(n)$. 
\begin{defn}  \label{ZXYs} Given finite multisets $X,Y$ of complex numbers we define the Dirichlet series 
\[
  \mathcal{Z}_{X,Y}(s) := \sum_{n=1}^{\infty} \frac{\sigma_X(n) \sigma_Y(n)}{n^{1+s}}.
\]
\end{defn}
The series $\mathcal{Z}_{X,Y}(s)$  plays an important role in the study of $I_{\I,\J}(\omega)$ and
will occur frequently in this article.  It should be noted that $\mathcal{Z}_{X,Y}(s)$ has an analytic continuation to the left of $\Re(s)=0$. 
In fact, 
\[
   \mathcal{Z}_{X,Y}(s) =  \Big( \prod_{x \in X, y \in Y} \zeta(1+s+x+y) \Big) \cA_{X,Y}(s)
\]
where $\cA_{X,Y}(s)$ is holomorphic in a half-plane containing $s=0$.  
Precise formulae for $\cA_{X,Y}(s)$, in the case $|X|=|Y|=3$,  are given in Lemma \ref{ZIJ} which follows. 
For $|X|=|Y| \le 2$, there are simple formulae for $ \mathcal{Z}_{X,Y}(s)$.
\\

\noindent {\bf Examples}. (i) Let $X=\{x_1\}$ and $Y=\{y_1\}$.  Then 
$\mathcal{Z}_{X,Y}(s) = \zeta(1+s+x_1+y_1).$ \\
(ii)   Let $X=\{x_1,x_2\}$ and $Y=\{y_1,y_2\}$. Then a calculation, using a formula of Ramanujan, establishes that 
\[
   \mathcal{Z}_{X,Y}(s) = \frac{\zeta(1+s+x_1+y_1)\zeta(1+s+x_1+y_2)\zeta(1+s+x_2+y_1)\zeta(1+s+x_2+y_2)}{\zeta(2+2s+x_1+x_2+y_1+y_2)}.
\]
In order to formulate the conjecture on the size of $I_{\I,\J}(\omega)$, we require a definition
concerning set operations on $\I$ and $\J$.  
\begin{defn}[$j$-swaps] \label{IJST}
Let  $\I=\{a_1, \ldots, a_k\}$, let $\J = \{b_1, \ldots, b_k \}$, and let $0 \le j \le k$.
Let $\mathcal{S} \subset \I$ and $\mathcal{T} \subset \J$ be such that $|\mathcal{S}|=|\mathcal{T}|=j$.  
We write
$$ \mathcal{S} = \{ a_{i_1}, \ldots, a_{i_j} \}
  \text{ and }
\mathcal{T}= \{ b_{l_1}, \ldots, b_{l_j} \}$$
 where 
$$ i_1 < \cdots < i_j \text{ and } 
l_1 < \cdots < l_j.$$ 
We shall write $\I_{\mathcal{S}}$ to denote the $k$-tuple obtained from $\{ a_1, \ldots, a_k \}$
by replacing $a_{i_r}$ with $-b_{l_r}$ for $1 \le r \le j$.  Similarly, 
we write $\J_{\mathcal{T}}$  to denote the $k$-tuple obtained from $\{ b_1, \ldots, b_k \}$
by replacing $b_{l_r}$ with $-a_{i_r}$ for $1 \le r \le j$. 
\end{defn}
These are called $j$-swaps since $j$ members of $\I$ and $\J$ are swapped with the negative signs inserted.
This terminology is introduced in the series of articles \cite{CK1}, \cite{CK2}, \cite{CK3}, and \cite{CK4}. 
In order to explain this we give some simple examples. 

\noindent {\bf Examples}.
Let  $\I=\{a_1,a_2, a_3\}$, $\J = \{b_1,b_2, b_3 \}$.  If $\mathcal{S}=\emptyset$ and $\mathcal{T}=\emptyset$, then 
$\I_\mathcal{S} = \I$ and $\J=\J_\mathcal{T}$. If $\mathcal{S}= \{a_1 \}$ and $\mathcal{T}=\{b_3 \}$, then 
$\I_\mathcal{S}=(-b_3,a_2,a_3)$ and $\J_\mathcal{T}=(b_1,b_2,-a_1)$.
If $\mathcal{S}=\{ a_1, a_3\}$ and $\mathcal{T}=\{ b_2, b_3 \}$, then 
$\I_\mathcal{S}=(-b_2, a_2,-b_3)$  and $\J_\mathcal{T}= (b_1, -a_1, -a_3)$. 
If $\mathcal{S}=\I$ and $\mathcal{T}=\J$, then 
$\I_\mathcal{S}=-\J$ and $ \J_\mathcal{T}=-\I$.   The cases of $|\mathcal{S}|=|\mathcal{T}|=0$ are  called 0-swaps, the 
cases of $|\mathcal{S}|=|\mathcal{T}|=1$ are  called 1-swaps, and in general the cases of $|\mathcal{S}|
=|\mathcal{T}|=j$ are  called $j$-swaps.  

We are now prepared to state the  conjecture of Conrey, Farmer, Keating, Rubinstein, and Snaith \cite{CFKRS} for  $I_{\I,\J}(\omega)$ when $\omega$ is a  convenient weight function, which we now define.  
Let $\varepsilon_0 \in (0,\frac{1}{4}]$ be a positive absolute constant, and let
$\omega$ be a function from $\mathbb{R}$ to $\mathbb{C}$  that satisfies the following:
\begin{align}
 & \label{cond1}  \omega \text{ is smooth}, \\
& \label{cond2} \text{the support of }  \omega  \text{ lies in } [c_1T,c_2T]  \text{ where } 0 < c_1 < c_2
\text{ are positive absolute constants}, \\
& \label{cond3} \text{there exists } T_0  \ge T^{\frac{3}{4}+\varepsilon_0} 
\text{ such that } T_0 \ll T \text{ and }
  \omega^{(j)}(t) \ll T_{0}^{-j} \text{ for } j \in \mathbb{N} \cup \{ 0 \} \text{ and all }  t \in \R. 
\end{align}
\begin{conj}  \label{cfkrsconj}
Let $T >3$ and suppose that $\omega$ satisfies \eqref{cond1}, \eqref{cond2}, and \eqref{cond3}. 
Assume that  $\I=\{a_1, \ldots, a_k\} \subset \mathbb{C}$
and $\J= \{b_1, \ldots, b_k \} \subset \mathbb{C}$ satisfy $|a_i|, |b_i| \ll (\log T)^{-1}$ for 
$i=1, \ldots, k$ (where the implicit constants are absolute).  Then, in those cases where $T$ is sufficiently 
large (in absolute terms), one has
\begin{equation}
   \label{shiftedconjecture}
   I_{\I,\J}(\omega) = 
   \int_{-\infty}^{\infty} 
    \Big(
    \sum_{\substack{\mathcal{S} \subset \I, \mathcal{T} \subset \J \\ |\mathcal{S}|=|\mathcal{T}|  }  }
   \mathcal{Z}_{\I_{\mathcal{S}},\J_{\mathcal{T}}}(0) 
   \Big( \frac{t}{2 \pi} \Big)^{-\mathcal{S}-\mathcal{T}} + o(1) \Big) \omega(t) dt
\end{equation}
where we have defined
\begin{equation*}
  \label{tSTdefn}
  \Big( \frac{t}{2 \pi} \Big)^{-\mathcal{S}-\mathcal{T}} :=  \Big( \frac{t}{2 \pi} \Big)^{-\sum_{x \in \mathcal{S}} x - \sum_{y \in \mathcal{T}} y}
\end{equation*}
for $\mathcal{S}  \subset \I$ and $\mathcal{T} \subset \J$. 
\end{conj}

\noindent {\bf Remarks}  
\begin{enumerate}
\item The works of Ingham \cite{In} and Motohashi \cite{Mo} establish this conjecture in the cases $|\I|=|\J|=1$
and $|\I|=|\J|=2$, respectively.
\item Conrey, Farmer, Keating, Rubinstein, and Snaith \cite{CFKRS} made this conjecture with the $o(1)$ 
replaced by $O(T^{-\frac{1}{2}+\varepsilon})$. That is,  the total error with the weight included is $O(T^{\frac{1}{2}+\varepsilon})$.   They gave a heuristic argument based on a ``recipe" (see \cite[section 2.2, pp. 53-56]{CFKRS}).
\item It should be observed that if $a_1=a_2$, or if $a_1=-b_1$ then some of the terms $ \mathcal{Z}_{\I_{\mathcal{S}},\J_{\mathcal{T}}}(0)$ are undefined due to the pole of $\zeta(s)$  at $s=1$.   However, these polar 
terms cancel out as can be seen by Lemma 2.5.1 of \cite{CFKRS}, which shows that the right hand side 
of   \eqref{shiftedconjecture} is holomorphic in the $a_i$'s and $b_i$'s, as long as $|a_i|, |b_i| \le \eta$
for a sufficiently small positive $\eta$. 
\item The authors of \cite{CFKRS} make the conjecture conditional on the weight function $\omega$
being ``suitable",   without actually defining what they mean by that.  We have stated the conjecture 
for functions $\omega$ satisfying \eqref{cond1}, \eqref{cond2}, and \eqref{cond3}.  
However,  Conjecture \ref{cfkrsconj} should hold for a much wider class
of functions.  For instance, it should also be true when $\omega(t) = \phi(t/T)$ where $\phi$ 
is a Schwarz class function. 
\item There is some debate on the size of the error term in this conjecture.
In the case $k=3$ and all shifts $a_i=b_j=0$, namely $I_3(T)$, 
Motohashi \cite[p.218, eq. (5.4.10)]{Mo} has conjectured that the error term is $\Omega(T^{\frac{3}{4}-\delta})$ for any fixed $\delta >0$.
Similarly, Ivic \cite[p. 171]{Iv1} has conjectured that the error term in this case is $O(T^{\frac{3}{4}+\varepsilon})$ and $\Omega( T^{\frac{3}{4}} )$.
Zhang \cite{Z} studied a related mean value (the cubic moment of quadratic Dirichlet $L$-functions at the central point)
and found a main term plus secondary term of size
$T^{\frac{3}{4}}$,  assuming certain meromorphicity and polynomial growth assumptions for the corresponding multiple 
Dirichlet series.   Recently,  Diaconu \cite{Di}  and Diaconu and Whitehead \cite{DW} have given a proof of this 
in both the function field and number field cases. 
\item Recent numerical calculations of these moments (see \cite{RuYa}) currently do not seem give to conclusive evidence of what is the correct size 
for the error term.  
\end{enumerate}

In this article we shall prove that a certain asymptotic formula for ternary additive divisor sums implies an asymptotic formula 
for $ I_{\I,\J}(\omega)$ in the case $|\I|=|\J|=3$.   In the remainder of this article we consider $\I, \J$  where 
\begin{equation*}
  \label{IJ}
  \I = \{ a_1, a_2, a_3 \}  \text{ and } 
   \J = \{ b_1, b_2, b_3 \}.
\end{equation*}
We shall assume that the numbers $a_i,b_i$ $(i=1,2,3)$ satisfy a certain condition of the form
\begin{equation}
    \label{sizerestrictiondelta}
   |a_i|, |b_i| \le \delta \text{ for } i =1,2,3,
\end{equation} 
where $\delta$ denotes a positive constant satisfying $\delta < \frac{1}{2}$. 
At times we shall require the more restrictive size restriction
\begin{equation}
  \label{sizerestriction}
  |a_i|, |b_i| \ll \frac{1}{\log T} \text{ for } i =1,2,3.
\end{equation}
In our proof of Theorem \ref{mainthm} and in our proofs of various
intermediate results we shall assume that $T$ is sufficiently large.  With this assumption, 
observe that if $a_i$ and $b_i$ satisfy \eqref{sizerestriction}, then they will
automatically satisfy \eqref{sizerestrictiondelta}. 
The family of additive divisor sums we are concerned with are sums
\begin{equation}
  \label{DfIJ}
  D_{f;\I,\J}(r) = \sum_{m-n=r} \sigma_{\I}(m) \sigma_{\J}(n) f(m,n)
\end{equation}
where $r \in \mathbb{Z} \setminus \{ 0 \}$ and $f$ is a smooth function. 
We require that, the partial derivatives of $f$ satisfy growth conditions. 
That is,  there must exist $X,Y,$ and $P$ positive such that 
\begin{equation}
  \label{fsupport}
  \text{support}(f) \subset [X,2X] \times [Y,2Y]
\end{equation}
and the partial derivatives of $f$ satisfy
\begin{equation} \label{fcond} 
 x^{m} y^{n} f^{(m,n)}(x,y) \ll_{m,n}
P^{m+n}. 
\end{equation}
In order to state a conjecture for the size of $ D_{f;\I,\J}(r)$, we must introduce several multiplicative functions. 
\begin{defn}
 Let $X= \{ x_1, \ldots, x_k\}$ be a finite multiset of complex numbers and  let $s \in \mathbb{C}$
 satisfy $\Re(s) > -\min_{j=1, \ldots, k} \Re(x_j)$. 
The multiplicative function $n \to g_X(s,n)$ is given by
 \begin{equation}
  \label{gXdefn}
   g_{X}(s,n)
=
\prod_{p^{\alpha} \mid \mid n} 
  \frac{\sum_{j=0}^{\infty} \frac{\sigma_{X}(p^{j+\alpha})}{p^{js}}}{ \sum_{j=0}^{\infty} \frac{\sigma_{X}(p^j)}{p^{js}}}.
\end{equation}
In other words, for $n \in \mathbb{N}$ we have 
$
  \sum_{m=1}^{\infty} \frac{\sigma_{X}(nm) }{m^s}
  =  g_X(s,n)
  \zeta(s+x_1) \cdots \zeta(s+x_k)$.
  
The multiplicative function $n \to G_X(s,n)$ is given by 
\begin{equation}
  \label{GXdefn}
  G_{X}(s,n) =  \sum_{d \mid n} \frac{\mu(d) d^s}{\phi(d)}    \sum_{e \mid d} \frac{\mu(e)}{e^s} g_{X} \Big(s,\frac{ne}{d} \Big).
\end{equation}
\end{defn}
Finally, we shall use the standard notation $e(\theta) := e^{2 \pi i \theta}$ and  we recall that 
 the Ramanujan sum is defined by $ c_{\ell}(r)= \sum_{\substack{a=1 \\ (a,\ell)=1}}^{\ell}
e ( \frac{ar}{\ell})$.
With these definitions in hand, we may state the additive divisor conjecture.

\begin{conj} \label{divconj}
There exists a pair $(\vartheta, C) \in [\frac{1}{2}, \frac{2}{3}) \times [0,\infty)$ for which 
the following (henceforth to be referred to as $\mathcal{AD}(\vartheta,C)$, or the 
`additive divisor hypothesis') holds.  Let $\varepsilon$ and $\varepsilon_2$ be positive
absolute constants.  Let $P > 1$, and let $X,Y > \frac{1}{2}$ satisfy $Y \asymp X$.  Let $f$
be a smooth function satisfying \eqref{fsupport} and \eqref{fcond} and suppose 
$\I = \{ a_1, a_2, a_3 \}$ and $\J = \{ b_1, b_2, b_3 \}$ are sets of distinct complex numbers 
satisfying $|a_i|, |b_i| \ll (\log X)^{-1}$ for $i=1,2,3$ (where the implicit constants are absolute).
Then, in those cases where $X$ is sufficiently large (in absolute terms), one has
  \begin{align*}
   &   D_{f;\I,\J}(r)  =   \sum_{i_1=1}^{3} \sum_{i_2=1}^{3} 
    \prod_{j_1 \ne i_1} \zeta(1-a_{i_1}+a_{j_1})   \prod_{j_2 \ne i_2} \zeta(1-b_{i_2}+b_{j_2}) \cdot  \\
  & \sum_{\ell=1}^{\infty} \frac{c_{\ell}(r)G_{\I}(1-a_{i_1},\ell)G_{\J}(1-b_{i_2},\ell)  }{\ell^{2-a_{i_1}-b_{i_2}}}
  \int_{\max(0,r)}^{\infty}  f(x,x-r) x^{-a_{i_1}}(x-r)^{-b_{i_2}} dx 
+ O ( P^{C} X^{\vartheta+\varepsilon} ), 
 \end{align*}
 uniformly for $1 \le |r| \ll X^{\frac{1}{2}-\varepsilon_2}$.  
\end{conj}
\noindent {\bf Remarks}. 
\begin{enumerate}
\item The main term in the above conjecture can be derived by following Duke, Friedlander, and Iwaniec's $\delta$-method
\cite{DFI}.   
\item  The function  $D_{f;\I,\J}(r)$ is a smoothed and shifted variant of the classical additive divisor sum
\begin{equation*} 
   \label{Dkxr}
     D_k(x,r) =\sum_{n \le x} d_k(n)d_k(n+r). 
\end{equation*}
The leading term in the conjecture for $D_k(x,r)$ can be worked out with a heuristic
probabilistic calculation.  This was recently done independently by  Tao \cite{Tao} and Ng and Thom \cite{NT}.
\item   In the case that $\sigma_{\I}(n)=\sigma_{\J}(n)=d(n)$, the divisor function,
Duke, Friedlander, and Iwaniec \cite{DFI} have shown that an analogous result is available
with an error term having $\vartheta =\frac{3}{4}$ and $C= \frac{5}{4}$.   Furthermore, they mention that improvements to their argument would reduce $C$ to $\frac{3}{4}$ and more elaborate arguments may lead to $C=\frac{1}{2}$.  
\item  Conrey and Gonek \cite{CG} have conjectured that in the case of $D_3(x,r)$ (an unsmoothed version of $D_{f;\I,\J}(r)$) that 
$\vartheta=\frac{1}{2}$ is valid for $1 \le r \le \sqrt{x}$.     
Moreover, Conrey and Keating \cite{CK3} have suggested that  $\vartheta=\frac{1}{2}$ is valid for $1 \le r \le x^{1-\varepsilon}$. 
 This is discussed extensively in  Ng and Thom \cite{NT} where a probabilistic argument has been given
 which suggests the error term for $D_k(x,r)$ is uniform in the range $1 \le r \le x^{1-\varepsilon}$.   
 Hence it is likely that the above conjecture holds in the wider range $r \le X^{1-\varepsilon_2}$. 
\item  Blomer \cite{Bl} has shown that there exists $C>0$ such that 
\[
   \sum_{\ell_1m-\ell_2n=h} a(m) \overline{a(n)} f(m,n) \ll P^{C}  X^{\frac{1}{2}+\Theta+\varepsilon}
\]
where $g(z) = \sum_{m=1}^{\infty} a(m) m^{\frac{k-1}{2}} e(mz) \in S_k(N,\chi)$ is  a primitive cusp form
(holomorphic newform) and $\Theta$ is a non-negative constant such that $|\lambda(n)| \ll n^{\Theta}$ 
 for eigenvalues $\lambda(n)$ of the Hecke operator $T_n$ acting on the space of weight 0 Maass cusp forms of level $N$.
\item Recently, Aryan \cite{Ar} has shown in the case that $\sigma_{\I}(n)=\sigma_{\J}(n)=d(n)$, $X=Y$, and $P=1$, 
that the corresponding error term is $O(X^{\frac{1}{2}+\Theta+\varepsilon})$.  
\item Unfortunately, for $k \ge 3$ an asymptotic evaluation of $D_k(x,r)$
 currently remains an open problem.  In the case of the unsmoothed sum
$D_k(x,r)$  uniform upper and lower bounds for $r \le x^{A}$, for $A>0$, of the correct order
of magnitude are known. Ng and Thom \cite{NT}  established lower bounds
and Henriot \cite{He} established upper bounds. 
Recently, K. Matom\"aki, M.  Radziwi\l\l, and T. Tao \cite{MRT}, \cite{MRT2} have established the
expected asymptotic for $D_{k}(x,r)$ for almost all $r$ in certain ranges.
\end{enumerate}
The main goal of this article is to show that Conjecture \ref{divconj} implies the case $k=3$ of Conjecture \ref{cfkrsconj}
for those choices of $\omega$ which satisfy  \eqref{cond1}, \eqref{cond2},  \eqref{cond3}, 
$T_0 \gg T^{b}$ where $b > \frac{C+3 \vartheta/2}{C+1}$, and $\int_{0}^{\infty} |\omega(t)| \, dt \gg T$. 
\begin{thm} \label{mainthm}
Let $\I = \{ a_1,a_2,a_3 \}$, $\J=\{ b_1,b_2,b_3 \}$, and assume the elements of $\I$ and $\J$ satisfy \eqref{sizerestriction}.   Let $\omega$ satisfy \eqref{cond1}, \eqref{cond2}, and 
\eqref{cond3}. 
Suppose $\e$ is a positive constant, and that $\vartheta \in [\frac{1}{2},\frac{2}{3})$ and $C >0$ are such that the hypothesis
$\mathcal{AD}(\vartheta,C)$ holds.  Then, provided that $T$ is sufficiently large, one has
\begin{equation}
\begin{split}
   \label{mainthmformula}
   I_{\I,\J}(\omega)   & = 
   \int_{-\infty}^{\infty} 
   \Big(
     \sum_{\substack{\mathcal{S} \subset \I, \mathcal{T} \subset \J \\ |\mathcal{S}|=|\mathcal{T}|  }  }
   \mathcal{Z}_{\I_{\mathcal{S}},\J_{\mathcal{T}}}(0) 
   \Big( \frac{t}{2 \pi} \Big)^{-\mathcal{S}-\mathcal{T}}  \Big) \omega(t) dt  
   +O \Big( T^{\frac{3 \vartheta}{2}+\varepsilon} \Big( \frac{T}{T_0} \Big)^{1+C}   
      \Big).
\end{split}
\end{equation}  
\end{thm}
From this theorem, we deduce an asymptotic formula with power savings error term for the sixth moment of the 
Riemann zeta function.  
\begin{cor}  \label{maincor}
If Conjecture \ref{divconj}, $\mathcal{AD}(\vartheta,C)$, is true for some constants
$\vartheta \in [\frac{1}{2},\frac{2}{3})$ and $C \ge 0$, 
then there exists a polynomial $\mathcal{P}_9(x)$ of degree 9 such that, for any $\varepsilon >0$, 
\begin{equation*}
  \label{I3Tasymptotic}
   I_3(T) = \int_{0}^{T} |\zeta(\tf+it)|^6dt = T\mathcal{P}_9(\log T) + O(T^{\frac{\frac{3\vartheta}{2}+1+C}{2+C}+\varepsilon}),
\end{equation*}
as long as $T$ is sufficiently large. 
\end{cor}
\noindent {\bf Remarks}. 
\begin{enumerate}
\item Conditionally, this confirms Conjecture \ref{cfkrsconj} and provides an asymptotic formula
as long as $\vartheta < \frac{2}{3}$. 
\item  Assuming a stronger version of Conjecture \ref{divconj} in which the conditions $|a_i|,|b_i| \le (\log X)^{-1}$ 
$(i=1,2,3)$
are replaced by $|a_i|,|b_i| \le \delta_0$ $(i=1,2,3)$, with $\delta_0$ chosen to be some (sufficiently small) positive
constant,  we can deduce a correspondingly strengthened version of Theorem \ref{mainthm}, in which the condition
\eqref{sizerestriction} is replaced by a weaker condition of the form \eqref{sizerestrictiondelta}. 
\item This result makes rigorous the argument of Conrey and Gonek in \cite{CG}. 
In their work they argued that the $I_3(T)$ is asymptotic to 
the sum of mean values of  the shape $\int_{T}^{2T} |\mathbb{D}_{T^{\theta_i}}(\tfrac{1}{2}+it)|^2 dt$  $(i=1,2)$
where $\mathbb{D}_{T^{\theta_i}}(s) = \sum_{n \le T^{\theta_i}} d_3(n) n^{-s}$ and $\theta_1+\theta_2=3$.  They then invoked a Theorem 
of Goldston and Gonek \cite{GG} to asymptotically evaluate these expressions.     
This required a certain conjectural formula for 
$D_3(x,r)$
with sharp error terms, uniform for $r \le \sqrt{x}$.  
\item In a sense, this improves work of Ivi\'{c} \cite{Iv2}, who showed 
that certain asymptotic formula  for $D_3(x,r)$  implies $I_3(T) \ll T^{1+\varepsilon}$ for any 
$\varepsilon >0$. 
A slight difference in our treatment is that we have chosen to deal with smooth additive divisor sums corresponding
to $\sigma_{\I}(n)$ and $\sigma_{\J}(n)$ where the elements of $\I$ and $\J$ are $\ll (\log T)^{-1}$.   This is a mild assumption and in fact, a number of proofs for unsmoothed additive divisor sums 
are deduced from their smooth and shifted versions.
\item In our proof we follow an argument of Hughes and Young \cite{HY} who evaluated the twisted fourth moment
\[
  \int_{-\infty}^{\infty} \Big( \frac{h}{k} \Big)^{-it}  |\zeta(\tf+it)|^4 \omega(t) dt.
\]
for coprime natural numbers $h,k$ satisfying $hk \le T^{\frac{2}{11}-\varepsilon}$. 
Recently, this was improved by Bettin, Bui, Li, and Radziwi\l\l  \ \cite{BBLR} \ who extended the range to $\max(h,k) \le T^{\frac{1}{4}-\varepsilon}$. 
\item If the hypothesis $\mathcal{AD}(\vartheta,C)$ is true for the best possible exponent $\vartheta=\frac{1}{2}$
(and some $C>0$), then the error term in \eqref{mainthmformula} will be of size $O(T^{\frac{3}{4}+\varepsilon})$
when $T_0 =T^{1-\varepsilon}$.
Note this matches with the (already mentioned) speculations of Ivic and Motohashi  on the error term  for $I_3(T)$. 
Furthermore, under the hypothesis  $\mathcal{AD}(\tfrac{1}{2},C)$ the exponent in the error term in Corollary 
\ref{maincor} is $1- \frac{1}{8+4C} +\varepsilon$.  Note that this is greater than $ \tfrac{7}{8} +\varepsilon$, 
the exponent in \eqref{I2P}. 

\item Formulae and numerical values for the coefficients of $\mathcal{P}_9$ may be found in 
\cite{CFKRS}, \cite{CFKRS2008}. 
\item From Theorem \ref{mainthm} we can also deduce formulae for the integrals
$$\int_{-\infty}^{\infty} \zeta^{(j_1)}(\tfrac{1}{2}+it) \zeta^{(j_2)}(\tfrac{1}{2}+it)\zeta^{(j_3)}(\tfrac{1}{2}+it)
\zeta^{(j_4)}(\tfrac{1}{2}-it)\zeta^{(j_5)}(\tfrac{1}{2}-it)\zeta^{(j_6)}(\tfrac{1}{2}-it) \omega(t) dt,$$
where $(j_1, \ldots, j_6) \in \mathbb{Z}_{\ge 0}^6$,
assuming Conjecture \ref{divconj}.  Such integrals can be used in detecting large gaps between 
the zeros of the Riemann zeta function.  For instance see Hall \cite{Hall}. 
\end{enumerate}
\begin{proof}[Proof of Corollary \ref{maincor}]
In this proof $\I = \{ a_1,a_2,a_3 \}$ and  $\J= \{b_1, b_2,b_3 \}$ are each triples of complex numbers.   We also write
$\vec{a} = (a_1,a_2,a_3)$, 
and $\vec{b}=(b_1,b_2,b_3)$.   Set $f({\vec a};{\vec b}) = I_{\I,\J}(\omega)$  and
\begin{equation*}
   g({\vec a};{\vec b}) =  \int_{-\infty}^{\infty} 
   \omega(t) \Big(
     \sum_{\substack{\mathcal{S} \subset \I, \mathcal{T} \subset \J \\ |\mathcal{S}|=|\mathcal{T}|  }  }
   \mathcal{Z}_{\I_{\mathcal{S}},\J_{\mathcal{T}}}(0) 
   \Big( \frac{t}{2 \pi} \Big)^{-\mathcal{S}-\mathcal{T}}  \Big)dt. 
\end{equation*}
Note that $f({\vec a};{\vec b})$ is holomorphic in $a_i$ and $b_i$ as long as $|a_i| < \tfrac{1}{2}$ and $|b_i| < \tfrac{1}{2}$.  
Also by Lemma 2.5.1 of \cite{CFKRS} and \cite[Sections 4.4,4.5]{Br}  $g({\vec a};{\vec b})$ is holomorphic in $a_i$ and $b_i$ as long as $|a_i| < \eta$ and $|b_i| < \eta$ for a sufficiently small fixed $\eta>0$.   
It shall be convenient to  set $a_4=-b_1,a_5=-b_2$, and $a_6=-b_3$.  We have
(see \cite[p. 371]{CFKRS2003})
\begin{equation*}
  g({\vec a};{\vec b}) = \int_{-\infty}^{\infty} \omega(t) 
  \Big( \frac{t}{2 \pi} \Big)^{-\frac{1}{2} \sum_{j=1}^{3} (a_j+b_j)  }
    P(\log \tfrac{t}{2 \pi}, \vec{a},\vec{b}) 
   dt,
\end{equation*}
where 
\begin{equation*}
  P(x,\vec{a},\vec{b}) =   
  -\frac{e^{(-a_1-a_2-a_3-b_1-b_2-b_3) \tfrac{x}{2}  } }{(3!)^2 (2 \pi i)^6} \oint 
  \cdots \oint \frac{G(z_1, \ldots, z_6) \Delta^2(z_1, \ldots, z_6)}{\prod_{j=1}^{6} \prod_{i=1}^{6} (z_j-a_i)} 
   e^{(x/2) \sum_{j=1}^{3} (z_j-z_{3+j})  } dz_1 \ldots dz_6,
\end{equation*}
such that the integrals $\oint$ are over small, positively oriented circles, inside of each of which lie
all of the points $a_1, \ldots, a_6$, 
\begin{align*}
  \Delta(z_1, \ldots, z_6)  & =  \prod_{1 \le i < j \le 6 } (z_j-z_i), \\
    G(z_1, \ldots, z_6) & =   A(z_1, \ldots, z_6) \prod_{i=1}^{3} \prod_{j=1}^{3} \zeta( 1 + z_i-z_{3+j}), \\ 
    A(z_1, \ldots, z_6) & =  \prod_{p} \prod_{i=1}^{3} \prod_{j=1}^{3}
    \Big(1-\frac{1}{p^{1+z_i-z_{3+j}}} \Big) \int_{0}^{1} 
    \prod_{j=1}^{3}  \Big(1-\frac{e(\theta)}{p^{\frac{1}{2}+z_j}} \Big)^{-1}  
    \Big(1-\frac{e(-\theta)}{p^{\frac{1}{2}-z_{3+j}}} \Big)^{-1}. 
\end{align*} 
It follows that for $|a_i|, |b_i| < \eta$ that $F({\vec a};{\vec b}) = f({\vec a};{\vec b}) -g({\vec a};{\vec b})$ is holomorphic in 
each of the variables.  
It follows from Theorem \ref{mainthm} that
\[
   |F(\vec{0};\vec{0})| \ll   T^{\frac{3 \vartheta}{2}+\varepsilon} \Big( \frac{T}{T_0} \Big)^{1+C}    
\]
and so, when $\omega$ satisfies \eqref{cond1}, \eqref{cond2}, and \eqref{cond3}, one has
\begin{equation*}
\begin{split}
  \int_{-\infty}^{\infty} |\zeta(\tfrac{1}{2}+it)|^6 \omega(t) dt 
  & =  \int_{-\infty}^{\infty} P_9 \Big( \log \frac{t}{2 \pi} \Big)  \omega(t) dt 
   + O \Big( 
  T^{\frac{3 \vartheta}{2}+\varepsilon} \Big( \frac{T}{T_0} \Big)^{1+C}
  \Big)
\end{split}
\end{equation*}
where
\begin{equation*}
\begin{split}
  P_9(x) = -  \frac{1}{(3!)^2 (2 \pi i)^6} \oint \cdots \oint \frac{G(z_1, \ldots, z_6) \Delta^2(z_1, \ldots, z_6)}{\prod_{j=1}^{6} z_j^6} 
   e^{(x/2) \sum_{j=1}^{3} (z_j-z_{3+j})  } dz_1 \ldots dz_6.
\end{split}
\end{equation*}
It is known that $P_9(x)$ is a polynomial of degree 9 (see \cite[Theorem 1.2]{CFKRS2008}).
Now choose $\omega^{+}(t)$ to be a smooth majorant of the function $\mathds{1}_{[T,2T]}(t)$ with $\omega^{+}=1$ in $[T,2T]$, $\text{support}(\omega^{+}) \subseteq [T-T_0, 2T+T_0]$, and satisfying $(\omega^{+})^{(j)} \ll T_0^{-j}$. 
It follows that 
\begin{equation*}
  \label{I3T2T}
  I_3(2T)-I_3(T) \le  \int_{T}^{2T} P_9 \Big( \log \frac{t}{2 \pi} \Big) dt +  O \Big(T_0 (\log T)^9+
  T^{\frac{3 \vartheta}{2}+\varepsilon} \Big( \frac{T}{T_0} \Big)^{1+C} 
  \Big).
\end{equation*}
The term $O(T_0 (\log T)^9)$ arises from estimating the portions of the integral corresponding to the intervals 
$[T-T_0,T]$ and $[2T,2T+T_0]$. Now choose $T_0$ so that the first
and second error terms are equal.   Solving for $T_0$ we find that $T_0= T^{\frac{\frac{3\vartheta}{2}+1+C}{2+C}}$ and note that this satisfies \eqref{cond3} since 
$\frac{7}{8} \le  \frac{ \frac{3\vartheta}{2}+1+C}{2+C}  < 1$ for $\vartheta \in [\frac{1}{2},\frac{2}{3})$ and $C \ge 0$.  Thus
\begin{equation}
   \label{I3Tub}
   I_3(2T)-I_3(T) \le  \int_{T}^{2T}  P_9 \Big( \log \frac{t}{2 \pi} \Big) dt + O \Big(
   T^{\frac{\frac{3\vartheta}{2}+1+C}{2+C}+\varepsilon}
   \Big).
\end{equation}
By a similar argument with a smooth minorant $\omega^{-}(t)$ of $\mathds{1}_{[T,2T]}(t)$ one
may substitute ``$\ge$" for ``$\le$" in \eqref{I3Tub}, and so may conclude that
\begin{equation}
  \label{I3dyadic}
    I_3(2T)-I_3(T)  = \int_{T}^{2T} P_9 \Big( \log \frac{t}{2 \pi} \Big) dt + O \Big(
   T^{\frac{\frac{3\vartheta}{2}+1+C}{2+C}+\varepsilon}
   \Big)
\end{equation}
for $T \ge T_1$, where $T_1$ is a sufficiently large fixed constant.  We now assume that $T \ge T_1^5$ so that $T^{\frac{1}{5}} \ge T_1$.  Let $J \ge 1$, such that  $\frac{T}{2^J} \ge T^{\frac{1}{5}} > \frac{T}{2^{J+1}}$.
Substituting $\frac{T}{2^j}$ in \eqref{I3dyadic}  for $j=1, \ldots, J$ and summing, we have 
\[
  I_3(T) - I_3(\tfrac{T}{2^J}) = \int_{\frac{T}{2^J}}^{T} P_9 \Big( \log \frac{t}{2 \pi} \Big) dt 
  + O \Big(  \sum_{j=1}^{J}  \Big( \frac{T}{2^j}  \Big)^{\frac{\frac{3\vartheta}{2}+1+C}{2+C}+\varepsilon}
 \Big) 
 = \int_{\frac{T}{2^J}}^{T} P_9 \Big( \log \frac{t}{2 \pi} \Big) dt 
  + O \Big(  T^{\frac{\frac{3\vartheta}{2}+1+C}{2+C}+\varepsilon} \Big).
\]
Now observe that  \footnote{ The constants in \eqref{I3Tinitial}  are effective.
For instance, we can use Lehman's bound 
\cite[Lemma 2]{L}
$|\zeta(\tfrac{1}{2}+it)| \le (4/(2 \pi)^{\frac{1}{4}}) t^{\frac{1}{4}}$
for $t \ge 128 \pi$.} 
\begin{equation}
  \label{I3Tinitial}
  I_3(\tfrac{T}{2^J}) \le I_3(2T^{\frac{1}{5}}) \le \int_{0}^{2T^{\frac{1}{5}}} 
  |\zeta(\tfrac{1}{2}+it)|^6 \, dt
  \ll   \int_{0}^{2T^{\frac{1}{5}}}  ( t^{\frac{1}{4}} +1)^6  \, dt \ll 
   T^{\frac{1}{2}}.
\end{equation}
Letting $P_9(u) = \sum_{j=0}^{9} \alpha_j u^j$, 
we find 
\[
 \Bigg| \int_{0}^{\tfrac{T}{2^J}} P_9 \Big( \log \frac{t}{2 \pi} \Big) dt  \Bigg|
 \le  \Big(
  \max_{0 \le j \le 9} |\alpha_j| \Big)
 \int_{2 \pi e}^{2 T^{\frac{1}{5}}}  \sum_{j=0}^{9}  \log^j( \tfrac{t}{2 \pi}) 
 \, dt  + O(1)
 \ll T^{\frac{1}{5}} \log^9(T). 
\]
From the last three displayed equations we obtain 
\[
    I_3(T) = \int_{0}^{T} P_9 \Big( \log \frac{t}{2 \pi} \Big) dt 
    +O \Big(  T^{\frac{\frac{3\vartheta}{2}+1+C}{2+C}+\varepsilon} + T^{\frac{1}{2}} \Big)
    =T \mathcal{P}_9(\log T) + O \Big(
   T^{\frac{\frac{3\vartheta}{2}+1+C}{2+C}+\varepsilon}
   \Big)
\]
since $\frac{\frac{3\vartheta}{2}+1+C}{2+C} > \frac{1}{2}$. Here
 $\mathcal{P}_9$ is a polynomial of degree 9, since $P_9$ is a polynomial of degree 9.
\end{proof}

\subsection{Conventions and Notation} \label{convnot}
Given two functions $f(x)$ and $g(x)$, we shall interchangeably use the notation  $f(x)=O(g(x))$, $f(x) \ll g(x)$, and $g(x) \gg f(x)$  to mean there exists $M >0$ such that $|f(x)| \le M |g(x)|$ for all sufficiently large $x$. 
We write $f(x) \asymp g(x)$ to mean that the estimates $f(x) \ll g(x)$ and $g(x) \ll f(x)$ simultaneously hold.  
If we write $f(x)=O_{a_1, \ldots, a_k}(g(x))$, $f(x) \ll_{a_1, \ldots, a_k} g(x)$, or $f(x) \asymp_{a_1, \ldots, a_k} g(x)$ for real numbers $a_1, \ldots, a_k$, then we mean that the 
corresponding constants depend on $a_1, \ldots, a_k$.  
In this article we shall use the convention that $\varepsilon$ denotes an arbitrarily small positive constant which may vary from instance to instance.  
In addition, $B$ shall denote a positive constant, which may be taken arbitrarily large and which may change from line to line. 
The letter $p$ will always be used to denote a prime number.
For a function $\varphi : \mathbb{R}^{+} \times \mathbb{R}^{+} \to \mathbb{C}$, $\varphi^{(m,n)}(x,y) = \frac{\partial^m}{\partial x^m}
\frac{\partial^n}{\partial y^n} \varphi(x,y)$.  
The integral notation $\int_{(c)} f(s) ds$ for a complex function $f(s)$ and $c \in \mathbb{R}$
 will be used frequently and is defined by  the following contour integral
\begin{equation*}
   \label{intc}
  \int_{(c)}f(s) ds = \int_{c-i\infty}^{c + i \infty} f(s) \, ds. 
\end{equation*}
In this article we shall consider $s \in \mathbb{C}$ and  usually we shall write its real part as $\sigma =\Re(s)$. 
Throughout this article we often use the fact that $\omega(t)$ has support in
$[c_1 T, c_2T]$ so that $t \asymp T$.  \\
  
\section{The approximate functional equation and the Dirichlet series $\mathcal{Z}_{\I,\J}(s)$}

One of the difficulties in evaluating mean values of the type \eqref{IkT} and \eqref{IIJT} is that the
integration is on the line $\Re(s)=\frac{1}{2}$ where $\zeta$ does not possess an absolutely convergent
Dirichlet series.  Instead, in the critical strip a standard tool is the approximate functional equation. 
The approximate functional equation for $\zeta(s)^k$ for $k=1,2$ was derived by Hardy and Littlewood.  Their result is of the form
\begin{equation*}
  \label{afeclassical}
  \zeta(s)^k = \sum_{n \le x} \frac{d_k(n)}{n^s} + \chi(s)^k \sum_{n \le y} \frac{d_k(n)}{n^{1-s}} +  \text{error}(s,x,y)
\end{equation*}
where $x$ and $y$ are an arbitrary pair of real numbers satisfying $x, y \gg 1$ and 
$xy =( \frac{|t|}{2 \pi} )^k$ (it being assumed that $s=\sigma+it$ with $-\frac{1}{2} \le \sigma \le \frac{3}{2}$ 
and $t^2 \gg 1$).  
There are several problems with this version of the approximate functional equation. 
First, each of these sums have sharp cutoffs, that is, the sum over $n$ does not decay smoothly. 
 In practice, it is convenient to sum over all integers with 
a weight which is smooth.   The sharp cutoff functions lead to poor error terms error$(s,x,y)$.  
Another problem is the presence of the factor $\chi(s)^k$.   Modern versions of the approximate functional 
equation (see \cite[p.92, eq. (4.20.1)]{Ti}) have the shape
\begin{equation}
   \label{afemodern}
   \zeta(s)^k =\sum_{n=1}^{\infty} \frac{d_k(n)    \nu_{x}(n)}{n^s}
    +
    \chi(s)^k \sum_{n =1}^{\infty} \frac{d_k(n) \tilde{\nu}_{y}(n)}{n^{1-s}}  
    +O(\exp(-c t^2))
\end{equation}
where $c >0$ and 
$\nu_{x}(m)$ and $\tilde{\nu}_{y}(n)$ 
are certain smooth weights essentially supported in $[0,x]$ and $[0,y]$, while
 $xy = ( \frac{|t|}{2 \pi} )^k$ and $s=\sigma+it$ with $0 \le \sigma \le 1$ and $t^2 \gg 1$.  
A classical approach to evaluating \eqref{IkT} 
is to use the identity 
$|\zeta(\tfrac{1}{2}+it)|^{2k} = \zeta(\tfrac{1}{2}+it)^k \zeta(\tfrac{1}{2}-it)^k$ and then
to apply \eqref{afemodern} with $s=\tfrac{1}{2} \pm it$ and then multiply out to give nine terms. 
Some of the terms contain factors of the form $\chi(\tf \pm it)^k$.   These  have to be treated with stationary phase and
lead to unappealing oscillatory integrals. 
One way to circumvent this problem is to develop an approximate functional equation 
for $|\zeta(\tf+it)|^{2k}$ instead of $\zeta(\tf+it)^k$.  This idea is due to Heath-Brown 
\cite[Lemma 1]{HB} who showed that 
\begin{equation}
  \label{HBafe}
  |\zeta(\tf+it)|^{2k} = 2\sum_{mn \le c T^{k}}^{\infty} \frac{d_k(m)d_k(n)}{
  \sqrt{mn}}
  \Big( \frac{m}{n} \Big)^{-it}K(mn,t)  + O(T^{-2}) \text{ for }  t \in [T,2T]
\end{equation}
where both $c$ (a positive constant) and $K(u,t)$ (a certain smooth weight function) depend on $k$. 

In this section, we prove an approximate functional equation for $\zeta_{\I}(\tf+it) \zeta_{\J}(\tf-it)$
analogous to \eqref{HBafe} in the case $\I = \{a_1,a_2,a_3\}$ and $\J= \{b_1,b_2,b_3\}$.  
Recall that $\zeta_{\I}$ and $\zeta_{\J}$ are defined 
in Definition \ref{zetaX}.  The following proposition is a straightforward generalization of \cite[Proposition 2.1, p. 209]{HY} which handles the case $\I = \{a_1,a_2\}$ and $\J= \{b_1,b_2\}$.

\begin{prop} \label{afe}
Let $\delta \in (0,\frac{1}{2})$ and $|t| \ge 1$. 
Let $\I = \{a_1,a_2,a_3\}$ and $\J= \{b_1,b_2,b_3\}$ be  complex numbers
which satisfy  $|a_i|, |b_i| \le \delta$ for $i=1,2,3$. 
Let $G(s)$ be an even, entire function of rapid decay
\footnote{$G$ is of rapid decay if for every $B >0$, we have $|G(s)| \le |s|^{-B}$ for $|\Re(s)| \le A$
and $|\Im(s)|$ sufficiently large.
An admissible $G$ is 
$G(s) =  \exp(s^2)$.  Observe that $A$ may be chosen to be any positive constant.
}
 as $|s| \to \infty$ in any fixed strip $| \Re(s)| \le A$ with $G(0)=1$.
 Let 
\begin{equation}
  \label{VIJ}
   V_{\I,\J;t}(x) = \frac{1}{2 \pi i} \int_{(1)} \frac{G(s)}{s} g_{\I,\J}(s,t) x^{-s}ds,
\end{equation}
where 
\begin{equation}
  \label{gIJ}
  g_{\I,\J}(s,t) = 
   \prod_{j=1}^{3} 
  \frac{  \Gamma \Big(\frac{\tfrac{1}{2}+a_j+s+it}{2} \Big) \Gamma \Big(\frac{\tfrac{1}{2}+b_j+s-it}{2} \Big)}{ \Gamma  \Big(\frac{\tfrac{1}{2}+a_j+it}{2} \Big) \Gamma \Big(\frac{\tfrac{1}{2}+b_j-it}{2} \Big)}.
\end{equation}
Furthermore, set 
\begin{equation}
  \label{XIJ}
  X_{\I,\J;t} = \pi^{\sum_{j=1}^3 a_j+b_j}
  \prod_{j=1}^{3} 
  \frac{  \Gamma \Big(\frac{\tfrac{1}{2}-a_j-it}{2} \Big) \Gamma \Big(\frac{\tfrac{1}{2}-b_j+it}{2} \Big)}{ \Gamma \Big(\frac{\tfrac{1}{2}+a_j+it}{2} \Big) \Gamma \Big(\frac{\tfrac{1}{2}+b_j-it}{2} \Big)}.
\end{equation}
Then for any constant $A'>0$, we have 
\begin{multline}
  \label{smoothafe}
  \zeta_{\I}(\tf+it) \zeta_{\J}(\tf-it)
  = \sum_{m,n=1}^{\infty} \frac{\sigma_{\I}(m) \sigma_{\J}(n)}{(mn)^{\frac{1}{2}}}
\Big( \frac{m}{n} \Big)^{-it}
V_{\I,\J;t}( \pi^3 mn)  \\
+ X_{\I,\J;t} \sum_{m,n=1}^{\infty} \frac{\sigma_{-\J}(m) \sigma_{-\I}(n)}{(mn)^{\frac{1}{2}}} 
\Big( \frac{m}{n} \Big)^{-it}V_{-\J,-\I;t}(\pi^3 mn)  
 + O( (1+|t|)^{-A'} ).
\end{multline}
\end{prop}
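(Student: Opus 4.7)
The plan is to establish \eqref{smoothafe} by the standard contour-shift technique applied to
\begin{equation*}
J := \frac{1}{2\pi i}\int_{(1)} \frac{G(s)}{s}\, g_{\I,\J}(s,t)\, \pi^{-3s}\, \zeta_{\I}(\tf + s + it)\,\zeta_{\J}(\tf + s - it)\, ds.
\end{equation*}
On $\Re(s)=1$ the Dirichlet series for $\zeta_{\I}(\tf + s + it)$ and $\zeta_{\J}(\tf + s - it)$ converge absolutely, so I interchange sum and integral, recognize the inner Mellin integral as $V_{\I,\J;t}(\pi^3 mn)$ from \eqref{VIJ}, and identify $J$ with the first double sum on the right of \eqref{smoothafe}.

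Next I shift the contour to $\Re(s) = -1$ by integrating around a rectangle with horizontal sides at $\pm i T^{*}$ and letting $T^{*} \to \infty$; an admissible $G$ decays super-exponentially in $|\Im(s)|$, so these horizontal pieces vanish in the limit. The residue at the simple pole $s = 0$ equals $G(0)\, g_{\I,\J}(0,t)\, \zeta_{\I}(\tf+it)\zeta_{\J}(\tf-it) = \zeta_{\I}(\tf+it)\zeta_{\J}(\tf-it)$, which is precisely the left-hand side of \eqref{smoothafe}. The other poles crossed are the simple poles of $\zeta_{\I}(\tf+s+it)$ at $s = \tf - a_i - it$ and of $\zeta_{\J}(\tf+s-it)$ at $s = \tf - b_j + it$; each lies at height $|\Im(s)|\asymp |t|$, and the super-exponential decay of $G$ there dominates the at-most-exponential growth of $g_{\I,\J}(s,t)$, so the combined contribution of these residues is $O((1+|t|)^{-A'})$ for every $A' > 0$. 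On the shifted line I apply the functional equation $\zeta(\tf+s+a_i+it) = \chi(\tf+s+a_i+it)\,\zeta(\tf-s-a_i-it)$ to each factor, which produces
\begin{equation*}
\zeta_{\I}(\tf+s+it)\zeta_{\J}(\tf+s-it) = \chi_{\I}(s,t)\,\chi_{\J}(s,-t)\,\zeta_{-\I}(\tf-s-it)\,\zeta_{-\J}(\tf-s+it),
\end{equation*}
and then substitute $s \mapsto -s$, using the evenness of $G$ to return the contour to $\Re(s) = 1$.

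Expanding the absolutely convergent Dirichlet series of the resulting product $\zeta_{-\J}(\tf+s+it)\zeta_{-\I}(\tf+s-it)$ and executing the Mellin inversion then recovers the second double sum in \eqref{smoothafe}, provided the gamma identity
\begin{equation*}
X_{\I,\J;t}\, g_{-\J,-\I}(s,t)\, \pi^{-3s} \;=\; g_{\I,\J}(-s,t)\, \chi_{\I}(-s,t)\, \chi_{\J}(-s,-t)\, \pi^{3s}
\end{equation*}
holds. This I verify by a direct computation with $\chi(w) = \pi^{w - \tf}\,\Gamma((1-w)/2)/\Gamma(w/2)$: the numerators of $g_{\I,\J}(-s,t)$ cancel with the denominators of $\chi_{\I}(-s,t)\chi_{\J}(-s,-t)$, the numerator of $X_{\I,\J;t}$ cancels with the denominator of $g_{-\J,-\I}(s,t)$, and the surviving gamma quotients on each side become the common factor $\prod_i \Gamma((\tf+s-a_i-it)/2)\,\Gamma((\tf+s-b_i+it)/2) / \Gamma((\tf+a_i+it)/2)\,\Gamma((\tf+b_i-it)/2)$. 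The $\pi$-powers balance because the $\pi^{\sum_i(a_i+b_i)}$ coming from $X_{\I,\J;t}$ reproduces the corresponding factor from $\chi_{\I}\chi_{\J}$, while $\pi^{-6s}\cdot\pi^{3s}$ matches $\pi^{-3s}$ on the left. The main obstacle is thus not conceptual but is this gamma bookkeeping, together with the justification that the zeta-pole residues are truly negligible; the latter is why the hypothesis on $G$ must include super-exponential (not merely polynomial) decay in $|\Im(s)|$. Note that the divisibility of $G$ by $Q_{\I,\J}(s)$ plays no role in the present proof; it is a condition reserved for the later contour-shifting steps in the main theorem.
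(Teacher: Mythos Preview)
Your argument is correct and is essentially the same contour-shift proof the paper gives; the only organizational difference is that the paper packages everything via the completed zeta function $\Lambda(s)=\pi^{-s/2}\Gamma(s/2)\zeta(s)$, setting $\Lambda_{\I,\J}(s)=\prod_i \Lambda(\tf+s+a_i+it)\Lambda(\tf+s+b_i-it)$, so that the functional equation becomes the clean reflection $\Lambda_{\I,\J}(-s)=\Lambda_{-\J,-\I}(s)$ and your gamma identity drops out as the pair of formulas $\bG_{\I,\J,t}(s)/\bG_{\I,\J,t}(0)=\pi^{-3s}g_{\I,\J}(s,t)$ and $\bG_{-\J,-\I,t}(s)/\bG_{\I,\J,t}(0)=\pi^{-3s}X_{\I,\J;t}\,g_{-\J,-\I}(s,t)$ without any bookkeeping with $\chi$.

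One small correction: your last remark, that the hypothesis on $G$ ``must include super-exponential (not merely polynomial) decay,'' overstates what is needed. At a zeta pole such as $s_0=\tf-a_1-it$ the three $a$-factors in the numerator of $g_{\I,\J}(s_0,t)$ have bounded imaginary part and the three $b$-factors have imaginary part $-t$, so the exponential pieces from Stirling match those of the denominator exactly and $g_{\I,\J}(s_0,t)$ has only polynomial growth in $|t|$ (as do the surviving zeta values). Hence ``rapid decay'' in the sense of the footnote --- faster than any fixed power of $|s|$ --- already forces those residues to be $O((1+|t|)^{-A'})$; super-exponential decay of $G$ is convenient for the specific choice $G(s)=Q_{\I,\J}(s)e^{s^2}$ but is not required by the argument. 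Your observation that the divisibility by $Q_{\I,\J}$ plays no role here is correct.
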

\noindent {\bf Remark}. This proposition can be generalized to the case $\I=\{a_1, \ldots, a_k \}$ and $\J=\{b_1,\ldots, b_k\}$.
\begin{proof} 
We begin by assuming that $\I$ and $\J$  satisfy 
\begin{equation}
  \label{ajbjspecial}
  |a_j| = |b_j| = 2^{j-1} \delta^j \text{ for } j=1,2,3,
\end{equation}
where $\delta < \frac{1}{2}$.  At the end of the proof we remove this condition. 
This ensures that the elements of $\I$ are distinct from one another and also the elements of $\J$ are distinct from one another. 
Indeed, this implies 
\begin{equation}
   \label{ajbjdiff}
   |a_i-a_j|  \ge 4 \delta^2 (\tfrac{1}{2}-\delta) 
   \text{ and }   |b_i-b_j|  \ge 4 \delta^2 (\tfrac{1}{2}-\delta)  \text{ for } 
   1 \le i < j \le 3. 
\end{equation}
Throughout this proof we let $
  \Lambda(s) = \pi^{-\frac{s}{2}} \Gamma(\tfrac{s}{2}) \zeta(s)$ and we make use of the functional equation $\Lambda(s)= \Lambda(1-s)$.
Set
\begin{equation}
  \label{LambdaIJ}
\Lambda_{\I,\J}(s) 
  = \prod_{j=1}^{3} \Lambda(\tf+s+a_j+it) \Lambda(\tf+s+b_{j}-it),
\end{equation}
and
\begin{equation}
  I_1 = \frac{1}{2 \pi i} \int_{(1)} \Lambda_{\I,\J}(s) \frac{G(s)}{s} ds.
\end{equation}
We shall move the contour left to the line $\Re(s)=-1$ and apply the residue theorem. 
The integrand has poles at $s=0$ and at $s=\pm \frac{1}{2}-a_j-it$ and $s=\pm \frac{1}{2}-b_j+it$ with $j=1,2,3$.
Observe that by the condition \eqref{sizerestrictiondelta}
 the poles  all lie within the region $-1 < \Re(s) < 1$.  Furthermore, since $|a_j|, |b_j|  <  \frac{1}{2}$ for $j=1,2,3$ and $|t| \ge 1$, 
 it follows that the poles are distinct. 
The residue at $s=0$ is  
\[
  \Lambda_{\I,\J}(0) =\prod_{j=1}^{3} \Lambda(\tf+a_j+it) \Lambda(\tf+b_{j}-it).
\]
It may be checked that the residue at the pole $s = \frac{1}{2}-a_1-it$ is 
\begin{align*}
&  \Big( \prod_{j=2}^{3} \Lambda(1+a_j-a_1) \Big)
 \prod_{j=1}^{3} \Big( |\Gamma( \tfrac{1+b_j-a_1-2it}{2} )  \zeta(1+b_j-a_1-2it)|  \Big)
  \frac{|G( \frac{1}{2}-a_1-it )|}{| \frac{1}{2}-a_1-it| } \\
  & \ll  \Big( \prod_{j=2}^{3} \frac{1}{|a_j-a_1|} \Big) 
  \Big(  t^{\delta} e^{-\frac{\pi}{2}t} \cdot   t^{\frac{1}{2}} \Big)^3 
  (1+|t|)^{-A-6}
   \ll_{\delta}
    t^{3} e^{-\frac{3\pi t}{2}}(1+|t|)^{-A-6}, 
\end{align*}
where we have used $\delta < \frac{1}{2}$, 
$G$ is of rapid decrease of $G(s)$ when $|\Im(s)|$ is large, and \eqref{ajbjdiff}. 
We get the same bound at the other poles. 
Let
\begin{align*}
  I_2 & =  \frac{1}{2 \pi i} \int_{(-1)} \Lambda_{\I,\J}(s) \frac{G(s)}{s} ds. 
\end{align*} 
By the residue theorem it follows that 
\[
    I_{1} -I_{2} =  \Lambda_{\I,\J}(0) + O( e^{-\frac{3\pi t}{2}}(1+|t|)^{-A-3}).
\]
Now observe that  $\Lambda_{\I,\J}(-s)=\Lambda_{-\J,-\I}(s)$ and thus
\[
  I_2 = -\frac{1}{2 \pi i} \int_{(1)} \Lambda_{-\J,-\I}(s) \frac{G(s)}{s} ds. 
\]
Set 
$\bZ_{\I,\J,t}(s)
  = \prod_{j=1}^{3} \zeta(\tf+s+a_j+it) \zeta(\tf+s+b_{j}-it)$
and 
\begin{equation}
  \label{GIJt}
 \bG_{\I,\J,t}(s)
  = 
  \pi^{-\frac{3}{2}-3s-\frac{1}{2}\sum_{j=1}^{3}(a_j+b_j)}
  \prod_{j=1}^{3} \Gamma \Big(\frac{\tf+s+a_j+it}{2} \Big) \Gamma \Big(\frac{\tf+s+b_{j}-it}{2} \Big).
\end{equation}
A calculation using the definition \eqref{LambdaIJ} establishes that 
$\Lambda_{\I,\J}(s)  =   \bZ_{\I,\J,t}(s)  \bG_{\I,\J,t}(s)$.
It follows that 
\begin{equation*}
\begin{split}
  \label{firstidentity}
   \bZ_{\I,\J,t}(0) & = \frac{1}{2 \pi i} \int_{(1)} 
    \bZ_{\I,\J,t}(s)
    \frac{  \bG_{\I,\J,t}(s) }{  \bG_{\I,\J,t}(0)} \frac{G(s)}{s} ds
    + 
     \frac{1}{2 \pi i} \int_{(1)} 
    \bZ_{-\J,-\I,t}(s)
    \frac{  \bG_{-\J,-\I,t}(s) }{  \bG_{\I,\J,t}(0)} \frac{G(s)}{s} ds \\
    & + O((1+|t|)^{-A}).
\end{split}
\end{equation*}
Here we make use of the bound $ |\bG_{\I,\J,t}(0)| \gg |t|^{-3} e^{-\frac{3 \pi t}{2}}$,
which follows from Stirling's formula.
We observe that
\begin{equation*}
    \frac{  \bG_{\I,\J,t}(s) }{  \bG_{\I,\J,t}(0)} = \pi^{-3s} g_{\I,\J}(s,t)
 \text{ and }    \frac{  \bG_{-\J,-\I,t}(s) }{  \bG_{\I,\J,t}(0)}  =\pi^{-3s} X_{\I, \J;t} g_{-\J,-\I}(s,t)
\end{equation*}
which follows from  \eqref{GIJt} and definitions \eqref{gIJ} and \eqref{XIJ}.
The second equality makes use of the identity
$X_{\I, \J;t}= \frac{  \bG_{-\J,-\I,t}(0) }{  \bG_{\I,\J,t}(0)}$.
Combining the above facts  with $\bZ_{\I,\J,t}(0) =   \zeta_{\I}(\tf+it) \zeta_{\J}(\tf-it)$
we arrive at
\begin{equation}
\begin{split}
  \label{secondidentity}
    \zeta_{\I}(\tf+it) \zeta_{\J}(\tf-it) & = \frac{1}{2 \pi i} \int_{(1)} 
    \bZ_{\I,\J,t}(s) \pi^{-3s} g_{\I,\J}(s,t)
    \frac{G(s)}{s} ds \\
    &+ 
     \frac{1}{2 \pi i} \int_{(1)} 
    \bZ_{-\J,-\I,t}(s)  \pi^{-3s} X_{\I, \J;t} g_{-\J,-\I}(s,t)
   \frac{G(s)}{s} ds 
    + O((1+|t|)^{-A}).
\end{split}
\end{equation}
However, we have the Dirichlet series expansions 
\[
    \bZ_{\I,\J,t}(s) = \sum_{m,n=1}^{\infty} \frac{\sigma_{\I}(m) \sigma_{\J}(n)}{m^{\frac{1}{2}+s+it}
    n^{\frac{1}{2}+s-it}} \text{ and }
    \bZ_{-\J,-\I,t}(s) = \sum_{m,n=1}^{\infty} \frac{\sigma_{-\J}(m) \sigma_{-\I}(n)}{m^{\frac{1}{2}+s+it}
    n^{\frac{1}{2}+s-it}}.
\]
These expressions are inserted in \eqref{secondidentity}.  Since they 
are absolutely convergent on $\Re(s)=1$, we may exchange integration and summation order.
Thus using definition \eqref{VIJ} we arrive at \eqref{smoothafe} in the case that $\I$ and $\J$ 
satisfy \eqref{ajbjspecial}.  We now deduce \eqref{smoothafe} in the case that 
\begin{equation}
  \label{clregion}
|a_j|,  |b_j| \le  2^{j-1} \delta^j \text{ for } j=1,2,3.  
\end{equation}
Note that $\zeta_{\I}(\tf+it) \zeta_{\J}(\tf-it)$ is holomorphic for $|a_j| < \frac{1}{2}$ and $|b_j|
< \frac{1}{2}$ for $j =1,2,3$.
Likewise the sum of the two terms on the right hand side of the equality \eqref{smoothafe} is holomorphic in  the same region in $\C^6$. 
Since $\delta < \frac{1}{2}$, the closed region \eqref{clregion}
is a subset of $|a_j|,|b_j| < \frac{1}{2}$ for $j=1,2,3$. 
It follows from six applications of the maximum modulus principle that \eqref{smoothafe} also holds assuming  \eqref{clregion}. Now as $\delta$ approaches $\tfrac{1}{2}$ from below,   $2^{j-1} \delta^j  \to \frac{1}{2}$
for $j=1,2,3$.
It follows that for any $\delta' \in (0,\tfrac{1}{2})$, \eqref{smoothafe} holds assuming $|a_j|,|b_j| \le \delta'$ 
for $j=1,2,3$ and this completes the  proof. 
\end{proof}

The next lemma gives estimates for the functions
$  X_{\I,\J;t}$,
 $ g_{\I,\J}(s,t)$, and $V_{\I,\J;t}$. 
\begin{lem} \label{Stirling} 
Let $A$ be a positive constant
and  $\delta \in (0, \frac{1}{6})$.
 Suppose that $t \ge 2A+3$.  Assume that 
$\I = \{a_1,a_2,a_3\}$ and $\J= \{b_1,b_2,b_3\}$
satisfy $|a_i|, |b_i| \le \delta$. 
 Then one has: \\
\begin{itemize}
\item[(i)]
$\displaystyle X_{\I,\J;t} = \Big( \frac{t}{2 \pi} \Big)^{-\sum_{i=1}^{3} (a_i+b_i)} 
 (1+O(t^{-1})) \, ;$
\item[(ii)]
For $0 \le \Re(s) \le A$, we have 
$\displaystyle g_{\I,\J}(s,t) = \Big( \frac{t}{2} \Big)^{3s} \Big(1+O \Big(
\Big(
\frac{|s|^2+1}{t} \Big)  t^{3 \delta} \Big)  \Big) \, ;$ \\
\item[(iii)]  Let $\varepsilon >0$ and $i \ge 0$. For $\Re(s)=\varepsilon$, $|\Im(s)| \le \sqrt{T}$, and $c_1 T \le t \le c_2 T$ where $0 < c_1 < c_2$,
\begin{equation}
  \label{gstiderivatives}
   \frac{d^{i}}{dt^{i}} g_{\I,\J}(s,t) \ll_{i,\varepsilon}  |s|^{i} T^{3\varepsilon+3 \delta-i};
\end{equation}
\item[(iv)]   
For $x > t^3$,
$\displaystyle V_{\I,\J;t}(x) = O \Big( \Big( \frac{t^3}{x} \Big)^{A} \Big) \,$.
\end{itemize}
\end{lem}
\begin{proof} The first two parts follow from Stirling's formula and are technical calculations. 
Since the proof of (i) is similar and easier than (ii), we leave it as an exercise. 
The proof of (ii) and (iii) will be deferred to Appendix \ref{appendix2}.   
Proof of part (iv).  Note that we can move the contour right to $\Re(s)=A$  
so that 
\begin{equation*}
\begin{split}
  V_{\I,\J;t}(x) & = \frac{1}{2 \pi i} \int_{(A)} \frac{G(s)}{s} g_{\I,\J}(s,t) x^{-s} ds 
   \ll   \int_{A-i \infty}^{A +i \infty} \frac{|G(s)|}{|s|} \Big( \frac{t^3}{8x} \Big)^{\Re(s)}
  \Big( 1 + \frac{c|s|^{2}}{t^{1-3 \delta}} \Big)
   |ds|
\end{split}
\end{equation*}
for some positive constant $c$,  
by part (ii).  It follows that $
 V_{\I,\J;t}(x) \ll   ( \frac{t^3}{x} )^{A}$ as desired.
\end{proof}
In  our evaluation of $I_{\I,\J}(\omega)$ we shall encounter the Dirichlet series 
$\mathcal{Z}_{\I,\J}(s) = \sum_{n=1}^{\infty} \frac{\sigma_{\I}(n) \sigma_{\J}(n)}{n^{1+s}}$.  We now provide a factorization
of this series into zeta factors times an absolutely convergent product near $s=0$. 
\begin{lem} \label{ZIJ}
Let $\I=\{ a_1,a_2,a_3\}$ and $\J = \{b_1,b_2,b_3\}$.  Let $\delta \in (0,\frac{1}{6})$ and assume 
$|a_i|,|b_i| \le \delta$ for $i=1,2,3$. 
We have for $\Re(s) \ge 3\delta$ 
\begin{equation}
  \label{ZIJprod}
  \mathcal{Z}_{\mathscr{\I},\J}(s) = 
 \Big( \prod_{i,j=1}^{3} \zeta(1+s+a_i+b_j) \Big)
   \cA_{\I,\J}(s)
\end{equation}
where 
\begin{equation}
  \label{AIJ}
  \cA_{\I,\J}(s) = \prod_{p}
  \cA_{p; \I, \J}(s)
\end{equation}
and
\begin{equation}
  \label{ApIJ}
   \cA_{p; \I, \J}(s) =
   P(p^{-a_1},p^{-a_2},p^{-a_3},p^{-b_1},p^{-b_2},p^{-b_3};p^{-s-1})
\end{equation}
where 
\begin{equation}
\begin{split}
\label{P}
 & P(X_1,X_2,X_3,Y_1,Y_2,Y_3,U) \\
 &  =  
  1  - X_1 X_2 X_3 Y_1 Y_2 Y_3
  (X_{1}^{-1}+X_{2}^{-1}+X_{3}^{-1})(Y_{1}^{-1}+Y_{2}^{-1}+Y_{3}^{-1}) U^2   \\
  & + X_1 X_2 X_3 Y_1 Y_2 Y_3 \times \\
 &  \Big(  
  (X_{1}^{-1}+X_{2}^{-1}+X_{3}^{-1})(X_1+X_2+X_3)
  +  (Y_{1}^{-1}+Y_{2}^{-1}+Y_{3}^{-1})(Y_1+Y_2+Y_3)
  -2
  \Big) U^3 \\
   & - X_1 X_2 X_3 Y_1 Y_2 Y_3
   (X_1+X_2+X_3)(Y_1+Y_2+Y_3)U^4 \\
   & +(X_1 X_2 X_3 Y_1 Y_2 Y_3)^2 U^6.
\end{split}
\end{equation}
This implies that $\cA_{\I,\J}(s)$ is absolutely convergent in $\Re(s) \ge 3 \delta -\frac{1}{2}$
since $  \cA_{p; \I, \J}(s) = 1 +O(p^{-1-2\delta})$ in this region. 
\end{lem}
\noindent {\bf Remark}.  This Lemma provides a memorphic continuation of $ \mathcal{Z}_{\mathscr{\I},\J}(s)$
to $\Re(s) \ge 3 \delta -\frac{1}{2}$ and shows that it has poles at $s=-a_i-b_j$ for $(i,j) \in \{ 1,2,3 \}^2$.   Indeed it is possible to show that the elements $-a_i-b_j$ are not zeros 
of $\cA_{\I,\J}(s)$.
\begin{proof}
Let $s=2z-1$.  It is shown in \cite{CFKRS} that
\begin{align*}
 & \sum_{n=1}^{\infty}
 \frac{\sigma_{\I}(n) \sigma_{\J}(n)}{n^{2z}}
  = \Big( \prod_{i,j=1}^{3} \zeta(2z+a_i+b_j)  \Big) \mathcal{B}_{\I,\J}(z)
\end{align*}
where 
\begin{equation*}
  \mathcal{B}_{\I,\J}(z) = 
  \prod_{p} \sum_{m=1}^{3} 
  \prod_{i \ne m} \frac{\prod_{j=1}^{3} \Big( 1-\frac{1}{p^{2z+a_j+b_i}} \Big)}{1-p^{b_m-b_i}}. 
\end{equation*} 
It follows that $
  \cA_{\I,\J}(s) = 
  \prod_{p}
   \cA_{p;\I,\J}(s)$
where
\begin{equation}
  \label{ApIJ2}
  \cA_{p;\I,\J}(s) =
   \sum_{m=1}^{3} 
  \prod_{i \ne m} \frac{\prod_{j=1}^{3} \Big( 1-\frac{1}{p^{1+s+a_j+b_i}} \Big)}{1-p^{b_m-b_i}}.
\end{equation}
It is proven in \cite[p.66]{CFKRS} that \eqref{ApIJ2} equals 
$P(p^{-a_1},p^{-a_2},p^{-a_3},p^{-b_1},p^{-b_2},p^{-b_3};p^{-s-1})$
where $P$ is explicitly given in \cite[eq.(2.6.7), p.64]{CFKRS}.  Observe that, since
$a_i, b_i$ satisfy \eqref{sizerestrictiondelta} it follows that $X_i =p^{-a_i}$ and $Y_i=p^{-b_i}$ satisfy $|X_i|, |Y_i| \le p^{\delta}$ for $i=1,2,3$. 
Thus it follows from \eqref{P} that 
\begin{equation}
\begin{split}
   \cA_{p; \I, \J}(s) & =1+ O 
   \Big( p^{4\delta-2-2\sigma} +  p^{6\delta-3-3\sigma} +  p^{8\delta-4-4\sigma}
   +p^{12\delta-6-6\sigma}  \Big) \\
   & = 1 + O(p^{-2(\sigma+1-2 \delta})= 1 + O(p^{-1-2 \delta})
\end{split}
\end{equation}
assuming $\sigma \ge -\frac{1}{2}+3 \delta$ and $\delta < \frac{1}{6}$.  
\end{proof}
In the next lemma, we provide standard bounds for $\zeta(s)$ and  bounds for $\mathcal{Z}_{\mathscr{\I},\J}(s)$. 
\begin{lem} 
\begin{itemize}
\item[(i)] For $t > t_0 >0$, uniformly in $\sigma$
\begin{equation}
  \label{zetabounds}
  |\zeta(\sigma+it)| \ll \begin{cases}
  1  & \sigma \ge 2, \\
  \log t   & 1 \le \sigma \le 2, \\
  t^{\frac{1-\sigma}{2}} \log t & 0 \le \sigma \le 1, \\
  t^{\frac{1}{2}-\sigma} \log t & \sigma \le 0. 
  \end{cases}
\end{equation}
\item[(ii)] Let $|a_i|,|b_i| \le \delta$ for $i=1,2,3$ with $\delta \in (0,\frac{1}{6})$. For $\Re(s) \ge 2 \delta$, 
\begin{equation*}
  \label{ZIJbd1}
   \mathcal{Z}_{\mathscr{\I},\J}(2s) \ll_{\delta} 1. 
\end{equation*}
\item[(iii)] Let $|a_i|,|b_i| \le \delta$ for $i=1,2,3$ with $\delta \in (0,\tfrac{1}{12})$.
For $\Re(s) = -\frac{1}{4} + 2\delta$ and $\Im(s)=u$,  
\begin{equation}
  \label{ZIJbd2}
     \mathcal{Z}_{\mathscr{\I},\J}(2s) \ll_{\delta}  (1+|u|)^{\frac{3}{2}}.
\end{equation}
\end{itemize}
\end{lem}
\begin{proof} (i) is \cite[Theorem 1.9]{Iv}.   (ii) For $\Re(s) \ge 2\delta$, it follows that $\Re(1+2s+a_i+b_j) \ge 1+2\delta$
and thus by \eqref{ZIJprod} and the absolute convergence of  $\cA_{\I,\J}(s)$  we have 
\[
       \mathcal{Z}_{\mathscr{\I},\J}(2s) \ll \zeta(1+2 \delta)^9 \ll_{\delta} 1.  
\]
(iii)  Since  $\Re(s) = -\frac{1}{4} + 2\delta$, we have that $\frac{1}{2}+2 \delta \le \Re(1+2s+a_i+b_j) 
\le \frac{1}{2}+6 \delta < 1$
and thus it follows from \eqref{zetabounds}  that each factor in the product \eqref{ZIJprod} is 
$\ll_{\delta} (1+|u|)^{\frac{1}{4}}$ where $u = \Im(s)$.  Therefore \eqref{ZIJbd2} follows since
there are 9 zeta factors in \eqref{ZIJprod}
and Lemma \ref{ZIJ} implies $\cA_{\I,\J}(2s) \ll_{\delta} 1$.
\end{proof}

\section{A formula for the sixth moment} \label{formulasixthmoment}

We now commence with our evaluation of 
$ I_{\I,\J}(\omega)$.  At the outset we assume a number of conditions on the elements of $\I$ and $\J$:
\begin{equation}
    \label{initialconditionA}
     |a_{j_1} \pm b_{j_2}| \ge \frac{C_0}{\log T}, \
      \text{ for } j_1,  j_2 \in \{1,2,3\},
\end{equation}
\begin{equation}
  \label{initialconditionB}
  |a_{j_1} \pm a_{j_2}| \ge \frac{C_0}{\log T}, \
  |b_{j_1} \pm b_{j_2}| \ge \frac{C_0}{\log T}, \
  \text{ for } j_1 \ne  j_2 \in \{1,2,3\},
\end{equation}
\begin{equation}
  \label{initialconditionC}
  |a_{i_1} \pm a_{i_2} \pm b_{j_1} \pm b_{j_2}| \ge \frac{C_0}{\log T}
   \text{ for } i_1 \ne  i_2 \in \{1,2,3\} \text{ and }
   j_1 \ne  j_2 \in \{1,2,3\}
\end{equation}
where $C_0$ is an arbitrary positive constant which satisfies $1 \ll C_0 \ll 1$. 
These size conditions shall be assumed throughout sections \ref{formulasixthmoment}, \ref{diagonalterms},  \ref{proofofmainthmoffdiagonal}, and \ref{section6}.  
They are made so that we will be able  to bound 
terms such as $\zeta(1+a_1-a_2)$ and $\zeta(1-a_1+a_2 -b_1+b_2)$ which can be very large without these assumptions.  They will also ensure that all the poles we
encounter are distinct. 
However, in the proof of Theorem \ref{mainthm} at the beginning of 
section \ref{proofofmainthmoffdiagonal} we show how to remove 
these conditions by making use of the fact that the $ I_{\I,\J}(\omega)$ and its main term  are holomorphic functions of the $a_i$ and $b_i$. 
By Proposition \ref{afe} it follows that
\begin{align*}
   I_{\I,\J}(\omega) & =   \sum_{m,n=1}^{\infty} \frac{\sigma_{\I}(m) \sigma_{\J}(n)}{(mn)^{\frac{1}{2}}}
 \int_{-\infty}^{\infty} \Big( \frac{m}{n} \Big)^{-it}
V_{\I,\J;t}( \pi^3 mn) \omega(t) dt  \\
 &+ \sum_{m,n=1}^{\infty}  \frac{\sigma_{-\J}(m)\sigma_{-\I}(n)}{(mn)^{\frac{1}{2}}}
  \int_{-\infty}^{\infty} \Big( \frac{m}{n} \Big)^{-it}
 X_{\I,\J;t}  V_{-\J,-\I;t}(\pi^3 mn) \omega(t) dt  +O \Big(\int_{-\infty}^{\infty} |\omega(t)| t^{-1} dt \Big) \\
 & := I^{(1)}+I^{(2)}+O(1).
\end{align*}
Opening the integral formulae for $V_{\I,\J;t}$ and $V_{-\J,-\I;t}$ yields 
\begin{equation*}
\begin{split} \label{Iu1}
 I^{(1)} 
 = \sum_{m,n=1}^{\infty} \frac{\sigma_{\I}(m) \sigma_{\J}(n)}{(mn)^{\frac{1}{2}}}
 \frac{1}{2 \pi i} \int_{(1)} \frac{G(s)}{s} (\pi^3 mn)^{-s} 
 \int_{-\infty}^{\infty} \Big( \frac{m}{n} \Big)^{-it}
g_{\I,\J}(s,t) \omega(t) dt ds,  
\end{split}
\end{equation*}
and 
\begin{equation*}
\begin{split} \label{Iu2}
 I^{(2)} 
 = \sum_{m,n=1}^{\infty} \frac{\sigma_{-\J}(m) \sigma_{-\I}(n)}{(mn)^{\frac{1}{2}}}
 \frac{1}{2 \pi i} \int_{(1)} \frac{G(s)}{s} (\pi^3 mn)^{-s} 
 \int_{-\infty}^{\infty} \Big( \frac{m}{n} \Big)^{-it}
 X_{\I,\J;t}  \  g_{-\J,-\I}(s,t) \omega(t) dt ds.
\end{split}
\end{equation*}
We now define the diagonal terms $I_{D}^{(1)}$ and $I_{D}^{(2)}$ to be those terms above where 
$m=n$.  Likewise the off-diagonal terms $I_{O}^{(1)}$ and $I_{O}^{(2)}$ are those terms above where
$m \ne n$.  More precisely,
\begin{equation} \label{ID1}
   I_{D}^{(1)} 
 = \sum_{n=1}^{\infty} \frac{\sigma_{\I}(n) \sigma_{\J}(n)}{n}
 \frac{1}{2 \pi i} \int_{(1)} \frac{G(s)}{s} (\pi^3 n^2)^{-s} 
 \int_{-\infty}^{\infty} 
 g_{\I,\J}(s,t) \omega(t) dt ds,  
\end{equation}
\begin{equation*} \label{ID2}
   I_{D}^{(2)} 
 = \sum_{n=1}^{\infty} \frac{\sigma_{-\J}(n) \sigma_{-\I}(n)}{n}
 \frac{1}{2 \pi i} \int_{(1)} \frac{G(s)}{s} (\pi^3 n^2)^{-s} 
 \int_{-\infty}^{\infty}
 X_{\I,\J;t}  \  g_{-\J,-\I}(s,t) \omega(t) dt ds,
\end{equation*}
\begin{equation} \label{IO1}
    I_{O}^{(1)} 
 = \sum_{m \ne n} \frac{\sigma_{\I}(m) \sigma_{\J}(n)}{(mn)^{\frac{1}{2}}}
 \frac{1}{2 \pi i} \int_{(1)} \frac{G(s)}{s} (\pi^3 mn)^{-s} 
 \int_{-\infty}^{\infty} \Big( \frac{m}{n} \Big)^{-it}
g_{\I,\J}(s,t) \omega(t) dt ds,  
\end{equation}
\begin{equation} \label{IO2}
  I_{O}^{(2)} 
 = \sum_{m \ne n} \frac{\sigma_{-\J}(m) \sigma_{-\I}(n)}{(mn)^{\frac{1}{2}}}
 \frac{1}{2 \pi i} \int_{(1)} \frac{G(s)}{s} (\pi^3 mn)^{-s} 
 \int_{-\infty}^{\infty} \Big( \frac{m}{n} \Big)^{-it}
 X_{\I,\J;t}  \  g_{-\J,-\I}(s,t) \omega(t) dt ds.
\end{equation}
Summarizing, we have 
$I^{(j)} = I_{D}^{(j)} + I_{O}^{(j)}$ for  $j=1,2$
and thus 
\begin{equation}
   \label{IIJTexpans}
    I_{\I,\J}(\omega) = (I_{D}^{(1)} + I_{O}^{(1)})+(I_{D}^{(2)} + I_{O}^{(2)}) +O(1). 
\end{equation}
The asymptotic evaluation of $I_{\I,\J}(\omega)$ is reduced to evaluating $I_{D}^{(j)}$ and $I_{O}^{(j)}$.
The calculations of the $I_{D}^{(j)}$ are straightforward.  The majority of this 
article concerns the evaluation of the off-diagonal sums $I_{O}^{(j)}$. 

\section{Diagonal terms} \label{diagonalterms}
In this section we evaluate the diagonal terms $I_{D}^{(j)}$.    
\begin{prop} \label{diagonal}
Let $\delta \in (0,\frac{1}{12})$ and assume 
$a_i,b_i$  satisfy 
$|a_i|,|b_i| \le \delta$ for $i=1,2,3$ and \eqref{initialconditionA} and \eqref{initialconditionC}.
Then 
\begin{equation}
\begin{split}
  \label{ID1asymp}
  I_{D}^{(1)}  & =   \int_{-\infty}^{\infty}  \mathcal{Z}_{\I,\J}(0) \omega(t) dt \\
& + \sum_{i,j=1}^3 \mathrm{Res}_{s=  \frac{-a_i -b_j}{2}} \mathcal{Z}_{\I,\J}(2s) \frac{G \Big( \frac{-a_i -b_j}{2}  \Big)}{ (\frac{-a_i -b_j}{2})}
  \int_{-\infty}^{\infty} \omega(t) 
  \Big( 
  \frac{t}{2 \pi}
  \Big)^{-\frac{3}{2}(a_i+b_j)}dt +O(T^{\frac{1}{4}+6 \delta})
\end{split}
\end{equation}
and 
\begin{equation}
\begin{split}
   \label{ID2asymp}
  I_{D}^{(2)} & =  \int_{-\infty}^{\infty} 
   \Big( \frac{t}{2 \pi} \Big)^{-\sum_{k=1}^{3} (a_k+b_k)} 
   \mathcal{Z}_{-\J,-\I}(0) \omega(t) dt \\
 & + \sum_{i,j=1}^{3}  
 \mathrm{Res}_{s=  \frac{a_i +b_j}{2}} \mathcal{Z}_{-\J,-\I}(2s) \frac{G \Big( \frac{a_i +b_j}{2}  \Big)}{ (\frac{a_i +b_j}{2})}
  \int_{-\infty}^{\infty} \omega(t) 
  \Big( 
  \frac{t}{2 \pi}
  \Big)^{-\sum_{k=1}^{3} (a_k+b_k)+\frac{3}{2}(a_i+b_j)}dt +O(T^{\frac{1}{4}+6 \delta}).
\end{split}
\end{equation}
\end{prop}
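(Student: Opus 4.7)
The plan is to treat $I_D^{(1)}$ and $I_D^{(2)}$ in parallel by a standard contour-shift argument applied to \eqref{ID1} and \eqref{ID2}. First, I interchange the sum and integrals (justified by absolute convergence on $\Re(s)=1$ together with the rapid decay of $G$) and recognize $\sum_n \sigma_{\I}(n)\sigma_{\J}(n) n^{-1-2s}=\mathcal{Z}_{\I,\J}(2s)$ to rewrite
\[
I_D^{(1)} = \frac{1}{2\pi i}\int_{(1)} \mathcal{Z}_{\I,\J}(2s)\frac{G(s)}{s}\pi^{-3s}\Big(\int_{-\infty}^{\infty} g_{\I,\J}(s,t)\,\omega(t)\,dt\Big)\,ds.
\]
By Lemma \ref{ZIJ}, $\mathcal{Z}_{\I,\J}(2s)$ is regular in $\Re(s)>-\tfrac{1}{4}$ apart from simple poles at $s=-(a_i+b_j)/2$ coming from the nine zeta factors; the Gamma factors in $g_{\I,\J}(s,t)$ introduce no poles in the strip $-\tfrac{1}{4}<\Re(s)<1$. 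Assuming the nine quantities $a_i+b_j$ are distinct, these poles are simple; the general case follows by holomorphy in the parameters.

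I then shift the contour from $\Re(s)=1$ to $\Re(s)=-\tfrac{1}{4}+\varepsilon$, which lies inside the region of absolute convergence of $\cA_{\I,\J}(2s)$. The residue at $s=0$ (from $1/s$, with $G(0)=g_{\I,\J}(0,t)=1$) equals $\mathcal{Z}_{\I,\J}(0)\int\omega(t)\,dt$, producing the first main term of \eqref{ID1asymp}. At each pole $s=-(a_i+b_j)/2$, Lemma \ref{Stirling}(ii) gives $g_{\I,\J}(s,t)=(t/2)^{3s}(1+O(|s|^2/t))$; combining $\pi^{-3s}$ with $(t/2)^{3s}$ yields $(t/2\pi)^{3s}$, which at the pole becomes $(t/2\pi)^{-3(a_i+b_j)/2}$. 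The $O(|s|^2/t)$ correction, since $|s|\ll(\log T)^{-1}$ at the pole, contributes $O(T^{-1})$ after integration against $\omega$ and is absorbed in the error term, so the nine residues reconstruct the second sum in \eqref{ID1asymp}.

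The line integral on $\Re(s)=-\tfrac{1}{4}+\varepsilon$ is the error. I bound each of the nine zeta factors of $\mathcal{Z}_{\I,\J}(2s)$ by convexity (they sit on $\Re=\tfrac{1}{2}+2\varepsilon$, each $\ll(1+|\Im s|)^{1/4+\varepsilon}$), estimate $g_{\I,\J}(s,t)\ll(t/2)^{-3/4+3\varepsilon}$ up to polynomial growth in $|\Im s|$ via Stirling, use $\int|\omega(t)|dt\asymp T$, and absorb all polynomial growth in $|\Im s|$ via the rapid decay of $G(s)$. This yields a bound of order $T^{1-3/4+\varepsilon}=T^{1/4+\varepsilon}$, as required. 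The argument for $I_D^{(2)}$ is identical in structure: one replaces $\mathcal{Z}_{\I,\J}$ by $\mathcal{Z}_{-\J,-\I}$ (whose nine poles sit at $s=(a_i+b_j)/2$) and inserts the extra factor $X_{\I,\J;t}\,g_{-\J,-\I}(s,t)$ in the $t$-integrand. By Lemma \ref{Stirling}(i), $X_{\I,\J;t}=(t/2\pi)^{-\sum_k(a_k+b_k)}(1+O(t^{-1}))$; combining this with $\pi^{-3s}g_{-\J,-\I}(s,t)=(t/2\pi)^{3s}(1+O(|s|^2/t))$ evaluated at $s=(a_i+b_j)/2$ produces exactly the exponent $-\sum_k(a_k+b_k)+\tfrac{3}{2}(a_i+b_j)$ of \eqref{ID2asymp}.

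The main technical obstacle is not the residue bookkeeping, which is mechanical, but the uniform Stirling-type control of $g_{\I,\J}(s,t)$ (and of $X_{\I,\J;t}g_{-\J,-\I}(s,t)$) along the shifted line for unbounded $|\Im s|$: one must check that the polynomial growth in $|\Im s|$ coming from Stirling on the Gamma ratio and from the nine convexity bounds on zeta is comfortably absorbed by the rapid decay of $G(s)$, so that the vertical integral genuinely saturates at $O(T^{1/4+\varepsilon})$ rather than picking up spurious logarithmic or polynomial factors in $T$. The choice $G(s)=Q_{\I,\J}(s)\exp(s^2)$ with its Gaussian decay handles this, but it is the one step that deserves careful verification.
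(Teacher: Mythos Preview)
Your proposal is correct and follows essentially the same route as the paper's proof: interchange sum and integrals to produce $\mathcal{Z}_{\I,\J}(2s)$, shift the $s$-contour to $\Re(s)=-\tfrac14+\varepsilon$, collect the residues at $s=0$ and at the nine points $s=-(a_i+b_j)/2$, and bound the remaining line integral using Stirling for $g_{\I,\J}$ together with convexity for the zeta factors and the rapid decay of $G$. The only structural difference is an ordering of steps: the paper first replaces $\pi^{-3s}g_{\I,\J}(s,t)$ by $(t/2\pi)^{3s}$ on $\Re(s)=\varepsilon$ (absorbing the $O(|s|^2/t)$ error as $O(T^{3\varepsilon})$) and \emph{then} shifts the contour, so the residue terms come out exactly in the form stated; you instead shift first and approximate $g_{\I,\J}$ at the poles afterwards. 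Both are valid, but note that your claim that the $O(|s|^2/t)$ correction at the poles contributes $O(T^{-1})$ is not quite right: after multiplying by $\mathrm{Res}\,\mathcal{Z}_{\I,\J}(2s)\ll(\log T)^{8}$ and $G(s)/s\ll\log T$ and integrating against $\omega$, one gets $O((\log T)^{C})$ rather than $O(T^{-1})$; this is still comfortably inside $O(T^{1/4+\varepsilon})$, so your argument stands.
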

\begin{proof}
By \eqref{ID1}, moving the sum inside the integral, 
\begin{align*}
 I_{D}^{(1)}
 & 
 = 
 \int_{-\infty}^{\infty} \omega(t)  
 \Bigg\{
 \frac{1}{2 \pi i} \int_{(1)} \frac{G(s)}{s} \pi^{-3s} 
g_{\I,\J}(s,t) 
\mathcal{Z}_{\I,\J}(2s)
ds \Bigg\} dt,  
\end{align*}
where $\mathcal{Z}_{\I,\J}$ is defined by Definition \ref{ZXYs}.
By Cauchy's theorem, we move the $s$ integral to the $\Re(s) = \varepsilon_1$ line where $\varepsilon_1 =2\delta$,
and then apply Lemma \ref{Stirling} (ii) to obtain
\begin{equation}
  \label{ID1formula}
   I_{D}^{(1)}
 = 
 \int_{-\infty}^{\infty} \omega(t)   \Bigg\{
 \frac{1}{2 \pi i} \int_{(\varepsilon_1)} \frac{G(s)}{s} 
 \Big( \frac{t}{2 \pi} \Big)^{3s} (1+O(|s|^2|t|^{3 \delta-1})) 
\mathcal{Z}_{\I,\J}(2s)
ds \Bigg\} dt.
\end{equation}
The contribution to the contour integral from the $O(|s|^2|t|^{3 \delta-1})$ term is 
\[
  \ll_{\varepsilon_1} |t|^{3 \delta+3 \varepsilon_1-1} \int_{-\infty}^{\infty}  \frac{|G(\varepsilon_1+iu)|}{|\varepsilon_1+iu|} |\varepsilon_1+iu|^2  du 
  \ll_{\varepsilon_1} |t|^{3 \delta+3 \varepsilon_1-1},
\]
since $\mathcal{Z}_{\I,\J}(2s) \ll_{\delta} 1$ for $\Re(s)=\varepsilon_1$ and $G$ is of rapid decay. 
Since $\int_{-\infty}^{\infty} \omega(t) |t|^{3\varepsilon_1-1} dt \ll T^{9 \delta}$, it follows that 
\[
    I_{D}^{(1)}
 = 
 \int_{-\infty}^{\infty} \omega(t)  \Bigg\{
 \frac{1}{2 \pi i} \int_{(\varepsilon_1)} \frac{G(s)}{s} 
 \Big( \frac{t}{2 \pi} \Big)^{3s}\mathcal{Z}_{\I,\J}(2s)
ds \Bigg\}  dt +O(T^{9 \delta}). 
\]
The inner integrand has a pole at $s=0$ and by Lemma \ref{ZIJ} it also has poles at 
$s = \frac{-a_i -b_j}{2}$ for  $(i,j) \in \{1,2,3\}^2$.  By the conditions \eqref{initialconditionA},  \eqref{initialconditionB},
and \eqref{initialconditionC} it follows that 
$a_i +b_j \ne 0$ for $(i,j) \in  \{1,2,3\}^2$ and $a_i +b_j \ne a_h+b_k$ for $(i,j), (h,k) \in  \{1,2,3\}^2$. 
Therefore all of these poles are distinct. 
We now move the line of integration to $\Re(s) = -\frac{1}{4} +\varepsilon_1$,   crossing these poles.  
The contribution from the residue at $s=0$ is 
\begin{equation}
  \label{residue1}
  \int_{-\infty}^{\infty}  \mathcal{Z}_{\I,\J}(0) \omega(t) dt
\end{equation}
and, for $1 \le i,j \le 3$, that of  the residue at $s=  \frac{-a_i -b_j}{2}$ is
\begin{equation}
  \label{residue2}
  \text{Res}_{s=  \frac{-a_i -b_j}{2}} \mathcal{Z}_{\I,\J}(2s) \frac{G \Big( \frac{-a_i -b_j}{2}  \Big)}{ \frac{-a_i -b_j}{2}}
  \int_{-\infty}^{\infty} \omega(t) 
  \Big( 
  \frac{t}{2 \pi}
  \Big)^{-\frac{3}{2}(a_i+b_j)}dt.
\end{equation}
The $s$-integral on the line $\Re(s) = -\frac{1}{4}+\varepsilon_1$ is 
\[
   \frac{1}{2 \pi i} \int_{(-\frac{1}{4}+\varepsilon_1)} \frac{G(s)}{s} 
 \Big( \frac{t}{2 \pi} \Big)^{3s}\mathcal{Z}_{\I,\J}(2s)
ds  \ll  
 t^{-\frac{3}{4}+3 \varepsilon_1} \int_{-\infty}^{\infty} \frac{|G(-\frac{1}{4}+\varepsilon_1+iu)|}{|-\frac{1}{4}+\varepsilon_1+iu|} (1+|u|)^{\frac{3}{2}} du  \ll t^{-\frac{3}{4}+3 \varepsilon_1}  
\]
by virtue of \eqref{ZIJbd2} and the rapid decay of $G$.
Therefore by \eqref{cond1}-\eqref{cond3}
\[
 \int_{-\infty}^{\infty} \omega(t)  \Bigg\{
 \frac{1}{2 \pi i} \int_{(-\frac{1}{4}+\varepsilon_1)} \frac{G(s)}{s} 
 \Big( \frac{t}{2 \pi} \Big)^{3s}\mathcal{Z}_{\I,\J}(2s)
ds  \Bigg\} dt  \ll  \int_{c_1 T}^{c_2 T}  t^{-\frac{3}{4}+3 \varepsilon_1}  |\omega(t)| dt \ll T^{\frac{1}{4}+6 \delta}.
\]
Since $\delta < \frac{1}{12}$, it follows that $T^{9\delta} \ll T^{\frac{1}{4}+6 \delta}$.
Thus, combining \eqref{residue1}, \eqref{residue2}, and the last displayed equation 
we obtain  \eqref{ID1asymp}.

The evaluation of $ I_{D}^{(2)}$ can be done in a completely analogous fashion. 
For instance with the help of Lemma \ref{Stirling} (i) and (ii), we can show that 
\[
   I_{D}^{(2)} = 
 \int_{-\infty}^{\infty} \omega(t)\Big( \frac{t}{2 \pi} \Big)^{-\sum_{k=1}^{3} (a_k+b_k)}
 \frac{1}{2 \pi i} \int_{(\varepsilon_1)} \frac{G(s)}{s} 
 \Big( \frac{t}{2 \pi} \Big)^{3s} 
\mathcal{Z}_{-\J,-\I}(2s)
ds dt + O(T^{9 \delta}).   
\]
This formula is obtained from \eqref{ID1formula} by formally replacing $\mathcal{I}$ by $-\mathcal{J}$ and
by replacing $\mathcal{J}$ by $-\mathcal{I}$ and by inserting the factor 
$ ( \frac{t}{2 \pi} )^{-\sum_{k=1}^{3} (a_k+b_k)}$. 
As before we shall move the contour in the $s$-integral left,  passing poles at $s=0$ and 
$s = \frac{a_i+b_j}{2}$ for $i,j=1, \ldots, 3$. 
Calculating the residues as before, we arrive at \eqref{ID2asymp}. 
\end{proof}

\section{Proof of main theorem and initial evaluation of the off-diagonal terms}
\label{proofofmainthmoffdiagonal}

In this section we  begin the evaluation of the off-diagonal terms $I_{O}^{(j)}$. 
In addition, we shall prove Theorem \ref{mainthm}. 
 This is the most involved part of the argument. 
Combining all of the results from sections \ref{proofofmainthmoffdiagonal}
and \ref{section6} we shall establish the following.  The proof of Theorem \ref{offdiagonal} may be found 
in section \ref{section6} just after Proposition \ref{sumtozero}. 
\begin{prop} \label{offdiagonal}
Let
$a_i,b_i$  satisfy 
$|a_i|, |b_i| \ll (\log T)^{-1}$  for $i=1,2,3$ and \eqref{initialconditionA}, 
 \eqref{initialconditionB}, and 
\eqref{initialconditionC} with $C_0 >0$.
Suppose that $\vartheta \in [\frac{1}{2},\frac{2}{3})$ and $C >0$ are such that the hypothesis
$\mathcal{AD}(\vartheta,C)$ holds. 
For every $\varepsilon >0$, there exists $T_{\varepsilon} >0$ such that for $T \ge T_{\varepsilon}$
\begin{equation}
\begin{split}
  \label{IOasymp}
&   I_{O}^{(1)}+ I_{O}^{(2)} = 
   \int_{-\infty}^{\infty} 
   \Big(
   \sum_{j=1}^{2}
        \sum_{\substack{\mathcal{S} \subset \I, \mathcal{T} \subset \J \\ |\mathcal{S}|=|\mathcal{T}| =j }  }
   \mathcal{Z}_{\I_{\mathcal{S}},\J_{\mathcal{T}}}(0) 
   \Big( \frac{t}{2 \pi} \Big)^{-\mathcal{S}-\mathcal{T}} \Big) \omega(t) dt \\
   & - \sum_{i,j=1}^3 \mathrm{Res}_{s=  \frac{-a_i -b_j}{2}} \mathcal{Z}_{\I,\J}(2s) \frac{G \Big( \frac{-a_i -b_j}{2}  \Big)}{ \frac{-a_i -b_j}{2}}
  \int_{-\infty}^{\infty} \omega(t) 
  \Big( 
  \frac{t}{2 \pi}
  \Big)^{-\frac{3}{2}(a_i+b_j)}dt \\
   & - \sum_{i,j=1}^{3}  
 \mathrm{Res}_{s=  \frac{a_i +b_j}{2}} \mathcal{Z}_{-\J,-\I}(2s) \frac{G \Big( \frac{a_i +b_j}{2}  \Big)}{ \frac{a_i +b_j}{2}}
  \int_{-\infty}^{\infty} \omega(t) 
  \Big( 
  \frac{t}{2 \pi}
  \Big)^{-\sum_{k=1}^{3} (a_k+b_k)+\frac{3}{2}(a_i+b_j)}dt \\
  & +O \Big( T^{\frac{3 \vartheta}{2}+\varepsilon} \Big( \frac{T}{T_0} \Big)^{1+C}   
      \Big).
\end{split}
\end{equation}
\end{prop}
The main theorem, Theorem \ref{mainthm}, follows from Proposition \ref{diagonal} and
Proposition \ref{offdiagonal}.
\begin{proof}[Proof of Theorem \ref{mainthm}]
We begin by establishing Theorem \ref{mainthm} in the case that $\I$ and $\J$ satisfy
$|a_i|, |b_i| \le C' (\log T)^{-1}$ 
for $i=1,2,3$, with a fixed choice of $C' >0$, along with \eqref{initialconditionA},   \eqref{initialconditionB}, and  \eqref{initialconditionC}, 
where it is assumed that $0 <C_0 < C'$. 
Later, we shall remove the last three conditions. 
Combining \eqref{IIJTexpans}, \eqref{ID1asymp}, \eqref{ID2asymp}, and \eqref{IOasymp}
\begin{equation}
\begin{split}
   \label{IIJomegaid}
   I_{\I,\J}(\omega) & =  \int_{-\infty}^{\infty} 
    \Big(
     \sum_{j=0}^{3}
        \sum_{\substack{\mathcal{S} \subset \I, \mathcal{T} \subset \J \\ |\mathcal{S}|=|\mathcal{T}| =j }  }
   \mathcal{Z}_{\I_{\mathcal{S}},\J_{\mathcal{T}}}(0) 
   \Big( \frac{t}{2 \pi} \Big)^{-\mathcal{S}-\mathcal{T}} \Big) \omega(t) dt  
    +O \Big( T^{\frac{3 \vartheta}{2}+\varepsilon} \Big( \frac{T}{T_0} \Big)^{1+C}    
        \Big). 
\end{split}
\end{equation}
Notice that the sum of residues in \eqref{ID1asymp} and \eqref{ID2asymp} exactly cancel the two sums
of residues in \eqref{IOasymp}. Also, the first terms in \eqref{ID1asymp} and \eqref{ID2asymp} are added into 
the first sum of \eqref{IOasymp} making the sum over $j \in \{0,1,2,3\}$. 
This completes the proof of the main theorem under the 
assumptions \eqref{initialconditionA}, \eqref{initialconditionB}, and  \eqref{initialconditionC}. 

We now demonstrate how to remove the conditions 
\eqref{initialconditionA}, \eqref{initialconditionB}, and  \eqref{initialconditionC}
and prove the theorem without these restrictions. 
This will be similar to an argument employed in \cite{Br}. 
The main idea is to apply the Cauchy integral formula.  
Note that $I_{\I,\J}(\omega)$ is holomorphic if the $a_i$'s and $b_i$'s satisfy $|a_i| < \frac{1}{2}$ and 
$|b_i| < \frac{1}{2}$.  In addition, by Lemma 2.5.1 of 
\cite{CFKRS} the first term after the equality in \eqref{IIJomegaid} is holomorphic if  $|a_i|<\eta$ and $|b_i|< \eta$ 
for a sufficiently small $\eta$.  It follows that the error term in \eqref{IIJomegaid} is holomorphic in the $a_i, b_i$, as long
they are restricted to small enough disks. 
Let $L(a_1,a_2,a_3,b_1,b_2,b_3)=I_{\I,\J}(\omega)$ and 
let $R(a_1,a_2,a_3,b_1,b_2,b_3)$ denote the main term on the  right hand side of \eqref{IIJomegaid}.
We observe that both of these functions are analytic functions of the $a_i$ and $b_i$. 
We have just shown in \eqref{IIJomegaid} that 
\begin{equation}
  \label{initialcase}
L(a_1,a_2,a_3,b_1,b_2,b_3) - R(a_1,a_2,a_3,b_1,b_2,b_3)+O \Big( T^{\frac{3 \vartheta}{2}+\varepsilon} \Big( \frac{T}{T_0} \Big)^{1+C}   
      \Big)
\end{equation}
subject to   \eqref{initialconditionA}, \eqref{initialconditionB}, and \eqref{initialconditionC}.
Suppose that $a_1,a_2,a_3,b_1,b_2,b_3$ are complex numbers satisfying
\begin{equation}
  \label{generalcase}
|a_j|, |b_j| \le \frac{C_0}{\log T}, \text{ for } j=1,2,3.
\end{equation}  
Now consider the polydisc $D \subset \mathbb{C}^6$ defined by $D= \prod_{j=1}^{3} D_j
\prod_{j=1}^{3} \tilde{D}_j$ 
where 
\begin{equation*}
  \label{Dj}
  D_{j} := \{  z \in \mathbb{C} \ | \ |z-a_j| \le r_j \}  \text{ and } 
  \tilde{D}_{j}  := \{  z \in \mathbb{C} \ | \ |z-b_j| \le s_j \} \text{ for } j=1,2,3
\end{equation*}
and
\begin{equation*}
 \label{circlesizes2}
 r_j= \frac{2^{j+1}C_0}{\log T} \text{ and }
 s_j= \frac{2^{j+4}C_0}{\log T} \text{ for } j=1,2,3.
\end{equation*}
We shall denote $z_j$ and $w_j$ as the complex variables lying on the boundaries of $D_j$ and $\tilde{D}_j$ respectively
for $j=1,2,3$. 
By Cauchy's integral formula
\begin{equation}
\begin{split}
  \label{cif}
  & L(a_1,a_2,a_3,b_1,b_2,b_3) - R(a_1,a_2,a_3,b_1,b_2,b_3) \\
  & = \frac{1}{(2 \pi i)^6} 
  \int_{\partial D_1} \cdots \int_{ \partial \tilde{D}_3} 
  \frac{L(z_1,z_2,z_3,w_1,w_2,w_3) - R(z_1,z_2,z_3,w_1,w_2,w_3)}{\prod_{j=1}^{3}(z_j-a_j)(w_j-b_j) } 
  d\vec{z} d\vec{w}.
\end{split} 
\end{equation}
where $d \vec{z} =dz_1 dz_2 dz_3$, $d \vec{w}=dw_1 dw_2 dw_3$, and $\partial D_j, \partial \tilde{D}_j$
denote the boundaries of the polydiscs. 
To simplify notation we set $\eta=\frac{C_0}{\log T}$.  
Observe that, if $1 \le j_1, j_2 \le 3$, 
\begin{equation}
    |z_{j_1}\pm w_{j_2}| \ge |w_{j_2}-b_{j_2}| - |z_{j_1} - a_{j_1}|  - |a_{j_1}| - |b_{j_2}| 
    \ge  32 \eta -16 \eta -\eta -\eta =14 \eta.
\end{equation}
If $1 \le j_2 < j_1 \le 3$
\begin{equation}
  |z_{j_1}\pm z_{j_2}|  \ge |z_{j_1}-a_{j_1}| -|z_{j_2}-a_{j_2}| -|a_{j_1}| -|a_{j_2}| 
  \ge 2^{j_1+1} \eta - 2^{j_2+1} \eta -2 \eta 	
  \ge 2\eta 
\end{equation}
and likewise
$
  |w_{j_1} \pm w_{j_2}| \ge
  2^{j_1+4} \eta - 2^{j_2+4} \eta -2 \eta 
  \ge 30\eta$. 
  Finally, if $1 \le i_2 < i_1 \le 3$ and $1 \le j_2 < j_1 \le 3$
\begin{equation}
\begin{split}
&   |z_{i_1} \pm z_{i_2} \pm w_{j_1} \pm w_{j_2}|    \\
& \ge  | w_{j_1}-b_{j_1}| -|w_{j_2} -b_{j_2}| - |z_{i_1}-a_{i_1}| - |z_{i_1}-a_{i_1}|
  - |a_{i_1}| -|a_{i_2}| - |b_{j_1}| - |b_{j_2}| \\
  & \ge ( 2^{j_1+4} -2^{j_2+4} -2^{i_1+1}-2^{i_2+1} -4)\eta \ge 4 \eta.  
\end{split}
\end{equation}
Thus we have shown that the numbers $z_j$ and $w_j$ satisfy  \eqref{initialconditionA},
\eqref{initialconditionB}, and \eqref{initialconditionC} and thus \eqref{initialcase} holds
with $a_j=z_j \in \partial D_j$ and $b_j=w_j \in \partial \tilde{D}_j$ for $j=1,2,3$.  
Using this bound in the integrand in \eqref{cif}  we obtain 
\begin{equation}
\begin{split}
  &     L(a_1,a_2,a_3,b_1,b_2,b_3) - R(a_1,a_2,a_3,b_1,b_2,b_3)  \\
 &  \ll  \Big( \prod_{j=1}^{3} \frac{\text{length}(\partial D_j ) 
   \text{length}(\partial \tilde{D}_j ) 
 }{r_j s_j}   \Big) T^{\frac{3 \vartheta}{2}+\varepsilon} \Big( \frac{T}{T_0} \Big)^{1+C}  
 \ll    T^{\frac{3 \vartheta}{2}+\varepsilon} \Big( \frac{T}{T_0} \Big)^{1+C}  
\end{split}
\end{equation}
and this completes the proof of Theorem \ref{mainthm} in the general case. 
\end{proof}
We devote the remainder of this section, and all of the next section, to presenting our proof of Proposition
\ref{offdiagonal} (for the conclusion of its proof, see below the statement of Proposition \ref{sumtozero}).
Note that in this proof we have the assumption $
|a_i|, |b_i| \ll (\log T)^{-1}$ for $i=1,2,3$.
However, since $T$ may be assumed sufficiently large, we can assume the weaker condition 
\begin{equation*}
 |a_i|, |b_i| \le \delta \text{ for } i=1,2,3, 
 \text{ where }  \delta \in (0,\tfrac{1}{12}).
 \end{equation*} 
It shall be convenient at times to make use of this weaker condition.  
This bound shall be assumed throughout sections \ref{proofofmainthmoffdiagonal} and \ref{section6}.
This will simplify  some of the estimates and also it will simplify parts of the argument.
Moreover, the the only time we actually use \eqref{sizerestriction} is in the invocation of Conjecture \ref{divconj}.  Indeed as mentioned in Remark (ii) following Corollary \ref{maincor}, we could prove
 a version of Theorem \ref{mainthm}
assuming  just \eqref{sizerestrictiondelta}, instead of \eqref{sizerestriction}.
However, we would have to  assume a version of Conjecture \ref{divconj}  where the condition
\eqref{sizerestriction} 
is replaced by 
\eqref{sizerestrictiondelta}  with $\delta < \delta_0$
for a sufficiently small $\delta_0 >0$. 
We now begin an initial evaluation of $I_{O}^{(1)}$.  The evaluation of  $I_{O}^{(2)}$ 
is similar and  may be derived from our formula for $I_{O}^{(1)}$. 
\subsection{Initial steps: smooth partition of unity, restricting $M$ and $N$}
Let $\e >0$ and let 
\begin{equation}
  \label{fstardefn}
    f^{*}(x,y) = \frac{1}{2 \pi i} 
    \int_{(\varepsilon)} \frac{G(s)}{s} 
    \Big(  \frac{1}{\pi^3 xy} \Big)^s 
    \frac{1}{T} 
    \int_{-\infty}^{\infty} 
    \Big( \frac{x}{y} \Big)^{-it}
    g_{\I,\J}(s,t) \omega(t) dt \, ds.
\end{equation}
Observe that by \eqref{IO1} and an application of Cauchy's theorem we have 
\[
  I_{O}^{(1)} = T  \sum_{m \ne n} 
  \frac{\sigma_{\I}(m) \sigma_{\J}(n)}{\sqrt{mn}}
  f^{*}(m,n). 
\]
We introduce a smooth partition of unity to simplify the evaluation of this sum.
The main reason  this is done is so that we can apply a version of the additive divisor conjecture where the variables 
are restricted to boxes of the shape $[M,2M] \times [N,2N]$ with $M \asymp N$.   
Let $W_0$ be a smooth function supported in $[1,2]$ such that for $x > 0$,  $\sum_{k \in \mathbb{Z}} W_0 ( x/2^{\frac{k}{2}} ) =1$.
In particular, for $x \ge 1$, we have 
\begin{equation}
  \label{W0dyadic2}
   \sum_{ \substack{ M= 2^{\frac{k}{2}} \\ k \ge -1} } W_0 \Big( \frac{x}{M} \Big) =1. 
\end{equation}
See \cite[p. 360]{H} for an example of such a function $W_0$.   Thus 
\begin{equation}
     \label{IO1dyadic}
      I_{O}^{(1)} =  \sum_{M,N} I_{M,N}
\end{equation}
where summation is over $M,N \in \{ 2^{\frac{k}{2}} \ | \ k \in  \Z \text{ and } k \ge -1 \}$,  while 
\begin{equation*}
  \label{IMN}
   I_{M,N} = \frac{T}{\sqrt{MN}} 
\sum_{m \ne n} \sigma_{\I}(m) \sigma_{\J}(n) W \Big(\frac{m}{M} \Big) W \Big(\frac{n}{N} \Big) f^{*}(m,n)
\end{equation*}
and  $W(x) =x^{-\frac{1}{2}} W_0(x)$. We shall use later on that the constraints on $M$ and $N$ in the summation
in \eqref{IO1dyadic} imply that one has  $|\{ M \ | \ M \le X \}|
= |\{ N \ | \ N \le X \}| \ll 1+\log X$ for $X \ge 1$.
  
We now restrict the size of $M$ and $N$ to $MN \ll T^{3+\e}$, by showing the contribution 
from $MN \gg T^{3+\e}$ is negligible.    Observe that
\begin{equation}
  \label{fstarV}
   f^{*}(x,y) = \frac{1}{T} \int_{-\infty}^{\infty}  \Big( \frac{x}{y} \Big)^{-it} \omega(t) V_{\I, \J;t}(\pi^3 xy) dt.
\end{equation}
Note that by Lemma \ref{Stirling} (iv)  $V_{\I,\J;t}(\pi^3 mn)$ is very small when $mn \gg T^{3 +\varepsilon}$
and $c_1 T \le t \le c_2 T$. 
From this we can deduce that, for any $B >0$ 
\begin{equation}
  \label{MNbig}
  \sum_{\substack{M,N \\ MN \gg T^{3+\varepsilon}}} I_{M,N}  \ll T^{-B}.  
\end{equation}
Thus, in our analysis of $I_{M,N}$, we may assume $MN \ll T^{3 +\varepsilon}$. 

The next step is to observe that $m$ and $n$ must be close to each other.
This is since  oscillations of the factor
$(\tfrac{x}{y})^{-it}$ cause
$f^{*}(x,y)$ to be small unless $x$ and $y$ are close to each other.
We now provide details of this fact. 
We begin by bounding the inner integral in \eqref{fstardefn} when $\Re(s) =\varepsilon$ and $|s| \le \sqrt{T}$. 
Integrating by parts $j$ times, it follows that  
\[
\int_{-\infty}^{\infty} 
\Big( \frac{x}{y} \Big)^{-it} g_{\I,\J}(s,t) \omega(t) dt 
\ll \frac{1}{|\log(\tfrac{x}{y})|^j}
\int_{-\infty}^{\infty} 
\Big|
\frac{\partial^j}{\partial t^j} g_{\I,\J}(s,t) \omega(t) 
\Big| dt.
\]
By the generalized product rule, Lemma \ref{Stirling} (iii), and \eqref{cond3}
\begin{align*}
 \frac{\partial^j}{\partial t^j} g_{\I,\J}(s,t) \omega(t)  & = \sum_{a+b=j} \binom{j}{a} 
  \frac{\partial^a}{\partial t^a} g_{\I,\J}(s,t) \omega^{(b)}(t) 
  \ll \sum_{a+b=j} \binom{j}{a} |s|^a T^{3 \varepsilon+3 \delta-a} T_{0}^{-b} 
   = T^{3 \varepsilon+3 \delta} \Big( \frac{|s|}{T} + \frac{1}{T_0} \Big)^j  \\
   & \ll  T^{3 \varepsilon+3 \delta}  \frac{\max(|s|,1)^j }{T_0^j}
\end{align*} 
and thus 
\[
\int_{-\infty}^{\infty} 
\Big( \frac{x}{y} \Big)^{-it} g_{\I,\J}(s,t) \omega(t) dt 
\ll   T^{1+3 \varepsilon+3 \delta}  \frac{\max(|s|,1)^j }{|\log(\tfrac{x}{y})|^jT_0^j},
\]
if $|s| \le \sqrt{T}$. 
We now bound this integral when $\Re(s) =\varepsilon$ and $|s| > \sqrt{T}$.   
Note that Lemma \ref{Stirling} (ii) implies that, when $\Re(s) =\varepsilon >0$, one will have  
\begin{align*}
 \int_{-\infty}^{\infty}  \Big(\frac{x}{y} \Big)^{-it}   g_{\I,\J}(s,t) \omega(t) dt
\ll \int_{c_1T}^{c_2T}  |g_{\I,\J}(s,t) | dt 
\ll  \int_{c_1T}^{c_2T}  \Big( \frac{t}{2} \Big)^{3 \varepsilon} \Big( 1 + O \Big(\frac{|s|^2}{t^{1-3\delta}}\Big)
\Big) dt  \ll (T+|s|^2) T^{3\varepsilon+3 \delta}. 
\end{align*}
However, since $|s| > \sqrt{T}$
\begin{equation*}
\begin{split}
   & (T+|s|^2) T^{3\varepsilon+3 \delta}
   \ll T^{3 \varepsilon+3 \delta}  |s|^2 = T^{3 \varepsilon+3 \delta} \max(|s|,1)^2 \\
  &  \le  T^{3 \varepsilon+3 \delta} \max(|s|,1)^2 
   \Big( \frac{\max(|s|,1)}{\sqrt{T}} \Big)^{2j}
   = T^{1+3 \varepsilon+3 \delta}   \Big( \frac{\max(|s|,1)^{2j+2}}{T^{j+1}}  \Big).
\end{split}
\end{equation*}
Since $1 \ll x,y$ and $xy \ll T^{3+\varepsilon}$, we have $|\log(x/y)| \le |\log x| + |\log y|
\ll \log T \ll_{j} T^{1/j}$ and thus $|\log(x/y)|^j T_0^j \ll_j T T_{0}^j \le T^{1+j}$. 
Hence we obtain for all $s$ satisfying $\Re(s) = \varepsilon$
\[
\int_{-\infty}^{\infty} 
\Big( \frac{x}{y} \Big)^{-it} g_{\I,\J}(s,t) \omega(t) dt 
\ll   T^{1+3 \varepsilon+3 \delta}  \frac{\max(|s|,1)^{2j+2} }{|\log(\tfrac{x}{y})|^jT_0^j}.
\]
Therefore, by \eqref{fstardefn}, 
\begin{equation}
\begin{split}
  \label{f*bound}
  f^{*}(x,y) \ll 
  \frac{T^{3 \varepsilon+3 \delta}}{(xy)^{\varepsilon} |\log(\tfrac{x}{y})|^j T_{0}^j}
    \int_{(\varepsilon)} \frac{|G(s)|}{|s|} \max(|s|,1)^{2j}
    |ds|  
 \ll_{j}   \frac{T^{3 \varepsilon+3 \delta}}{|\log(\tfrac{x}{y})|^j T_{0}^j},
\end{split}
\end{equation} 
subject to $x,y \gg 1$ and $xy \ll T^{3 + \e}$. 
If 
\begin{equation*}
   \label{logcondition}
|\log(\tfrac{x}{y})| \gg T_{0}^{-1+\varepsilon},
\end{equation*}
 then for any $B>0$ we obtain from \eqref{f*bound} and \eqref{cond3} 
\begin{equation*}
  \label{f*bound2}
  f^{*}(x,y) \ll \frac{T^{3 \varepsilon+3 \delta}}{T_{0}^{j \varepsilon}} 
  \le T^{3(\e+\delta-\frac{j}{4} \e)} \ll T^{-B},
\end{equation*}
by choosing $j \ge \frac{4B}{3 \e} + 4 + \frac{4 \delta}{\e}$. 
Letting $m-n=r$, it follows that 
\[
  I_{M,N} 
= \frac{T}{\sqrt{MN}}
\sum_{r \ne 0} \sum_{\substack{m-n=r \\ |\log(\frac{m}{n})| \ll T_{0}^{-1+\varepsilon}}} \sigma_\I(m) \sigma_{\J}(n) 
W \Big(\frac{m}{M} \Big) W \Big(\frac{n}{N} \Big)f^{*}(m,n)
+ O(T^{-B})
\]
for any constant $B>0$.

We now impose several other conditions on $M$ and $N$.  
Note that the condition $|\log(m/n)| \ll T_{0}^{\varepsilon-1}$ attached to the above sum implies
that it is an empty sum unless one has $N/3 \le M \le 3N$: for if $M < N/3$ or $M >3N$, then,
when $m,n$ are such that $W(m/M) W(n/N) \ne 0$, one will have $|\log(m/n)| \ge \log(3/2)$. 
Also, observe that the conditions of summation imply $0 \ne r/N \ll T_{0}^{\varepsilon-1}$, so that one 
must have either an empty sum, or else $3N \ge M \ge N/3 \gg |r| T_{0}^{1-\varepsilon} \gg T_{0}^{1-\varepsilon}$.
We shall henceforth, in this section, write $M \asymp N$ to mean that 
\begin{equation}
  \label{MNcondition}
 N/3 \le M \le 3N 
\end{equation}
 (we only use this restricted meaning
for $\asymp$
with the symbols $M$ and $N$).  The contribution to the sum 
$\sum_{MN \ll T^{3 +\varepsilon}} I_{M,N}$ from cases where one does not have $M \asymp N \gg T_{0}^{1-\varepsilon}$
is therefore of size $O((\log T)^2 T^{-B})$.  Thus we may restrict ourselves to evaluating $I_{M,N}$ in the cases where 
$M \asymp N$ and $M,N \gg T_{0}^{1-\varepsilon}$. 
Observe that if $x-y=r$ is fixed, then by \eqref{fstardefn}, one has:
\begin{equation}
  \label{f*2}
   f^{*}(x,y) = \frac{1}{2 \pi i} \int_{(\varepsilon)} 
\frac{G(s)}{s} \Big( \frac{1}{\pi^3 xy} \Big)^s 
\frac{1}{T} \int_{-\infty}^{\infty} \Big( 
1+\frac{r}{y} \Big)^{-it} g_{\I,\J}(s,t) \omega(t) dt ds. 
\end{equation}
We have thus shown the following.
\begin{prop} 
\label{IMNestimate}
Let $B >0$ be arbitrary and fixed.  Then for $M,N \gg T_{0}^{1-\varepsilon}$ satisfying
$M \asymp N$ and $MN \ll T^{3+\varepsilon}$, we have 
\[
  I_{M,N} = \frac{T}{\sqrt{MN}} 
\sum_{0 < |r| \ll \frac{M}{T_0} T^{\varepsilon}}  D_{f_r;\I,\J}(r) 
 + O(T^{-B})
\]
where, for $0 \ne r \in \mathbb{Z}$, we define
\begin{equation}
  \label{fMN}
  f_r(x,y) = f_{r;M,N}(x,y)=
   W\Big( \frac{x}{M} \Big)
W \Big( \frac{y}{N} \Big)
\frac{1}{2 \pi i} \int_{(\varepsilon)} \frac{G(s)}{s} 
\Big( \frac{1}{\pi^3 xy} \Big)^s 
\frac{1}{T} \int_{-\infty}^{\infty} \Big( 
1+\frac{r}{y} \Big)^{-it} g_{\I,\J}(s,t) \omega(t) dt \, ds
\end{equation}
when $x,y >0$ (and put $f_r(x,y)=0$ otherwise), and take $D_{f_r;\I,\J}(r)$
to be the ternary additive divisor sum that is given by \eqref{DfIJ}, for $f=f_r$. 
For those pairs $M,N \gg 1$ such that one has $MN \ll T^{3 +\varepsilon}$ and either 
$M \not \asymp N$ or $\min(M,N) \ll T_{0}^{1-\varepsilon}$, the bound $I_{M,N} \ll T^{-B}$ applies.  
\end{prop}
\subsection{Application of the additive divisor conjecture $\mathcal{AD}(\vartheta,C)$}

In summary, by \eqref{IO1dyadic}, \eqref{MNbig}, and Proposition \ref{IMNestimate} we have established 
\begin{equation}
  \label{IO1dyadic2}
  I_{O}^{(1)} = \sum_{\substack{ M, N \\ M \asymp N, MN \ll T^{3+\varepsilon}
  \\ M, N \gg T_{0}^{1-\varepsilon}
  }} I_{M,N} +O(1). 
\end{equation}

We assume the validity of Conjecture \ref{divconj}, concerning the ternary additive divisor problem, which puts us now in 
a position to estimate $I_{M,N}$ for the relevant $M$ and $N$ in \eqref{IO1dyadic2}. Indeed, it suffices to note
that we have the following result on $f_r(x,y)$ in \eqref{fMN}, implying that the requisite conditions 
\eqref{fsupport} and \eqref{fcond} are satisfied when one has there: $X=M, Y=N, P=T^{1+\varepsilon}T_{0}^{-1}$,
and $f(x,y)=T^{- \frac{3 \varepsilon}{2}} f_r(x,y)$.
\begin{lem} \label{fpartials}
Let $\e$ and $\e_0$ satisfy $0 <\e \le \frac{\e_0}{2} \le \frac{1}{8}$. 
We have that $\text{support}(f_{r;M,N}) \subseteq [M,2M] \times [N,2N]$ and we have for  $M \ll T^{\frac{3}{2}+\varepsilon}$, $M \asymp N$,
and $1 \le |r| \ll \frac{M}{T_0} T^{\varepsilon}$ that 
\begin{equation*}
  \label{ffpartials}
  x^m y^n f_{r;M,N}^{(m,n)}(x,y) \ll   T^{\frac{3}{2} \varepsilon} P^{n} \text{ where } P = T^{1+\varepsilon} T_{0}^{-1}.
\end{equation*}
\end{lem}
Note that this trivially gives $ x^m y^n  f_{r;M,N}^{(m,n)}(x,y) \ll  T^{\frac{3}{2} \varepsilon}    P^{m+n}$ with  $P=T^{1+\varepsilon} T_{0}^{-1}$.
The proof of this technical lemma is deferred to Appendix \ref{appendix2}. 
We now apply Conjecture \ref{divconj} to those $I_{M,N}$ occurring in the sum \eqref{IO1dyadic2}.
Observe that $f(x,y):=T^{- \frac{3 \varepsilon}{2}} f_r(x,y)$ is supported in $[M,2M] \times [N,2N]$ with $M \asymp N$, and (by Lemma \ref{fpartials}) satisfies the condition  \eqref{fcond}. 
We must also check that $|r| \ll M^{\frac{1}{2}-\e_2}$ for some $\e_2 > 0$. 
Since  $0 < \varepsilon \le \frac{\varepsilon_0}{2} \le \frac{1}{8}$, we deduce
\[
   |r| \ll \frac{M}{T_0} T^{\varepsilon} \le M T^{-\frac{3}{4}-\frac{\varepsilon_0}{2}}
   \ll M^{\frac{1}{2}} T^{\frac{3}{4} + \frac{\varepsilon}{2}-\frac{3}{4}-\frac{\varepsilon_0}{2}}
   \le M^{\frac{1}{2}} T^{-\frac{\varepsilon_0}{4}} \ll M^{\frac{1}{2}-\frac{\e_0}{8}}
\]
(using that $M \ll T^{\frac{1}{2}+\e} < T^2$). 
Therefore by Conjecture \ref{divconj}
\begin{equation*}
\begin{split}
   \label{IMNnew}
     I_{M,N} & = \frac{T}{\sqrt{MN}} 
\sum_{0 < |r| \ll \frac{M}{T_0} T^{\varepsilon}}
 \sum_{i_1=1}^{3} \sum_{i_2=1}^{3} 
    \prod_{j_1 \ne i_1} \zeta(1-a_{i_1}+a_{j_1})   \prod_{j_2 \ne i_2} \zeta(1-b_{i_2}+b_{j_2})  \\
  &  \times \sum_{\ell=1}^{\infty} \frac{c_{\ell}(r)G_{\I}(1-a_{i_1},\ell)G_{\J}(1-b_{i_2},\ell)  }{\ell^{2-a_{i_1}-b_{i_2}}}
 \int_{\max(0,r)}^{\infty} f_r(x,x-r) x^{-a_{i_1}}(x-r)^{-b_{i_2}} dx  +O(\mathcal{E}_{M,N}) 
 \end{split}
 \end{equation*}
where 
\begin{equation*}
  \label{EMN}
   \mathcal{E}_{M,N}  =
   \frac{T^{1+\frac{3 \e}{2}}}{\sqrt{MN}} \sum_{0 < |r| \ll \frac{M}{T_0} T^{\varepsilon}}  \Big(\frac{T}{T_0} \Big)^{C}M^{\vartheta+\varepsilon}.
\end{equation*} 
Next we consider the contribution of the errors $\mathcal{E}_{M,N}$ to \eqref{IO1dyadic2}.  
\begin{lem}   
Let $\varepsilon >0$, $0 < \vartheta < 1$,  and $T$ is sufficiently large with respect to $\varepsilon$.  Then 
\begin{equation}
  \label{sumMNEMN}
 \sum_{\substack{ M, N \\ M \asymp N, MN \ll T^{3+\varepsilon}}}   \mathcal{E}_{M,N}  \ll 
  T^{\frac{3 \vartheta}{2}+\frac{5 \varepsilon}{2}} \Big( \frac{T}{T_0} \Big)^{1+C}.
\end{equation}
\end{lem}
\begin{proof}
Since $M \asymp N$, we have
\begin{align*}
 \mathcal{E}_{M,N} &  \ll \frac{T^{1+\frac{3 \e}{2}}}{M}  \sum_{0 < |r| \ll \frac{M}{T_0} T^{\varepsilon}}  
  \Big( \frac{T}{T_0} \Big)^{C}M^{\vartheta+\varepsilon}
  \ll  T^{\frac{3 \e}{2}} \Big( \frac{T}{M}   \Big)
      \Big( \frac{T}{T_0} \Big)^{C} M^{\vartheta +\varepsilon}
      \Big(
       \frac{M}{T_0}  T^{\varepsilon} \Big)
    \ll 
    T^{ \frac{ 5\varepsilon}{2}} \Big( \frac{T}{T_0} \Big)^{1+C} M^{\vartheta}. 
\end{align*} 
Summing this over $M$ and $N$ we find
 \begin{equation*}
 \begin{split}
   \label{sumMN1}
   \sum_{\substack{MN \ll T^{3+ \e} \\ M \asymp N}}  T^{\frac{5 \e}{2} } \Big( \frac{T}{T_0} \Big)^{1+C} M^{\vartheta} 
 &   \ll  T^{\frac{5 \varepsilon}{2}} \Big( \frac{T}{T_0} \Big)^{1+C}  
   \sum_{\substack{M \ll T^{\frac{3}{2}+\frac{\e}{2}} \\
  N \asymp M} } M^{\vartheta}  
     \ll T^{\frac{3 \vartheta}{2}+3 \e} \Big( \frac{T}{T_0} \Big)^{1+C}
\end{split}
\end{equation*}
and the lemma is established. 
\end{proof} 
The last lemma will be used to bound the contribution of the 
error terms $\mathcal{E}_{M,N}$ when \eqref{IMNnew} is inserted into \eqref{IO1dyadic2}.
Next we give an initial simplification of  the main term of $I_{O}^{(1)}$
that is  obtained, via \eqref{IO1dyadic2},  from the main term in \eqref{IMNnew}.  
\subsection{Extending the range to $0 <|r| \le R_0$}
 First, we may extend the range of $r$ in \eqref{IMNnew}.
Let 
\begin{equation}
  \label{R0}
R_0 = T^{5}. 
\end{equation}  
 Notice that, by \eqref{f*2}, \eqref{fMN}, the integral in \eqref{IMNnew} is 
\begin{equation}
  \label{iMNr}
 i_{M,N,r;i_1,i_2} =  \int_{\max(0,r)}^{\infty} W \Big( \frac{x}{M} \Big) W \Big(  \frac{x-r}{N} \Big) f^{*}(x,x-r) 
 x^{-a_{i_1}}(x-r)^{-b_{i_2}} \,  dx.
\end{equation}

In \eqref{IMNnew}, we seek (at the cost of adding to the error term there) to weaken the conditions
on the variable of summation $r$ to just $0 < |r| \le R_0$.  We argue as follows. 
We begin by observing that the variable of integration $x$ in \eqref{iMNr} is  constrained to lie
in the interval $(M,2M) \cap (N+r, 2N+r)$ since $W$ is supported in $[1,2]$.  
If this open interval is the empty set then $ i_{M,N,r;i_1,i_2} =0$. 
Suppose without loss of generality that $r \ge 1$.   Notice that $x>N+r$ implies $x-r \ge N \ge 2^{-\frac{1}{2}}$ and thus $x \ge r+2^{-\frac{1}{2}}$.  We must  also have  $r \le \frac{5}{3}M$.  This is since
we must have $N+r \le 2M$ (if not, then $(M,2M) \cap (N+r, 2N+r)$ is empty).
Therefore $r \le 2M-N \le 2M- \frac{M}{3}= \frac{5M}{3}$ since we are assuming \eqref{MNcondition}.  By these observations and 
\eqref{f*bound} it follows that, for $r \gg \frac{M}{T_0} T^{\varepsilon}$ and $j$ sufficiently large 
we have 
\begin{equation}
\begin{split}
  \label{fstarxr}
  f^{*}(x,x-r) 
  & \ll \frac{T^{3 \varepsilon+3 \delta}}{ |\log(\frac{x}{x-r})  T_0|^j} \le \frac{T^{3 \varepsilon+3 \delta}}{ |\log(\frac{2M}{2M-r})  T_0|^j} \text{ since } x \in [r+2^{-\frac{1}{2}},2M] \\
  & =   \frac{T^{3 \varepsilon+3 \delta}}{ |\log(1-\frac{r}{2M})  T_0|^j}   \ll   \frac{T^{3 \varepsilon+3 \delta}}{  |\frac{r}{2M} T_0|^j} \text{ since } |\log(1-x)| \gg x \text{ for }
  0 < x \le \frac{5}{6} \\
  & \ll T^{3 \varepsilon+3 \delta-j \varepsilon}  \ll T^{-B}.
\end{split}
\end{equation}

By \eqref{iMNr}, \eqref{sizerestrictiondelta}, and \eqref{fstarxr} one has $i_{M,N,r;i_1,i_2} \ll M^{1+2 \delta}T^{-B}$ for $r \gg \frac{M}{T_0} T^{\varepsilon}$.  One finds, similarly, that $i_{M,N,r;i_1,i_2} \ll M^{1+2 \delta}T^{-B}$ for $r$ satisfying $-r \gg \frac{M}{T_0}  T^{\varepsilon}$. 
Therefore the difference $\Delta_{M,N}$ made to the main term in \eqref{IMNnew} by extending the range of
the outer summation to include all non-zero integers $r$ with $|r| \le R_0$ satisfies
\begin{equation}
 \Delta_{M,N} :=  \frac{T}{\sqrt{MN}} \sum_{ \frac{M}{T_0} T^{\varepsilon} \le    |r| \le R_0 }
   \sum_{i_1=1}^{3} \sum_{i_2=1}^{3}  |{\bf c}_{i_1,i_2}|
 \sum_{\ell=1}^{\infty}  |u_{\ell,r;i_1,i_2}|   M^{1+2\delta}T^{-B} 
\end{equation}
where 
\begin{equation}
\begin{split}
  \label{ci1i2ulr}
 {\bf c}_{i_1,i_2}  & =  
    \prod_{j_1 \ne i_1} \zeta(1-a_{i_1}+a_{j_1})   \prod_{j_2 \ne i_2} \zeta(1-b_{i_2}+b_{j_2}), \\
u_{\ell,r;i_1,i_2}  & =  c_{\ell}(r)G_{\I}(1-a_{i_1},\ell)G_{\J}(1-b_{i_2},\ell)  \ell^{-2+a_{i_1}+b_{i_2}}.
\end{split}
\end{equation}
Observe that by \eqref{initialconditionB}  we have 
\begin{equation}
  \label{ci1i2bd}
 {\bf c}_{i_1,i_2}  \ll (\log T)^4
\end{equation}
and 
\begin{equation}
\begin{split}
  \label{ulrbound}
  \sum_{\ell \ge 1} |u_{\ell,r;i_1,i_2}|  & \ll \sum_{\ell \ge 1} \gcd(\ell,r) \ell^{-2+5 \delta }
  = \sum_{g \mid r} g^{-1+5 \delta}  \sum_{\substack{ \ell' \ge 1 \\ \gcd(\ell',r/g)=1 }} (\ell')^{-2 +5 \delta} 
  \ll d_2(|r|).
\end{split}
\end{equation}
Here we have used the well-known estimate $|c_{\ell}(r)| \le \gcd(\ell,r)$.
In addition, we applied 
 $|\ell^{a_{i_1}+b_{i_2}}|
\le \ell^{2 \delta}$  by
\eqref{sizerestrictiondelta} 
and thus by Lemma \ref{GIbd} of Appendix \ref{appendix1}  $|G_{\I}(1-a_{i_1},\ell)G_{\J}(1-b_{i_2},\ell)|  \le  \ell^{2 \delta} d_2(\ell^2)^2 \ll \ell^{3 \delta}$. 
Since $M^{2 \delta} \le M^{\frac{1}{3}} \ll (T^2)^{\frac{1}{3}} \le T$ as $\delta < \frac{1}{6}$, it follows that 
\begin{equation}
  \label{Erbound}
  \Delta_{M,N} \ll  (\log T)^4   T^{2-B} \Big(\sum_{1 \le |r| \le R_0} d_2(|r|) \Big) 
   \ll  (\log T)^4   T^{2-B}  R_0 (\log R_0)  \ll (\log T)^{-1}, 
\end{equation}
by choosing $B=8$.  
By this, combined with \eqref{IO1dyadic2}, \eqref{IMNnew}, and \eqref{sumMNEMN}, 
we obtain
   \begin{equation}
  \label{IO1b}
  I_{O}^{(1)} = \sum_{\substack{ M, N \\ M \asymp N, MN \ll T^{3+\varepsilon}
  \\ M, N \gg T_{0}^{1-\varepsilon}
  }} \tilde{I}_{M,N} + O \Big(   T^{\frac{3 \vartheta}{2}+3 \e} \Big( \frac{T}{T_0} \Big)^{1+C}
  \Big)
\end{equation}
where 
\begin{equation}
\begin{split}
  \label{ItildeMN}
  \tilde{I}_{M,N} &  = 
  \frac{T}{\sqrt{MN}} 
\sum_{0 < |r| \le R_0}
 \sum_{i_1=1}^{3} \sum_{i_2=1}^{3}   {\bf c}_{i_1,i_2} \Big(
     \sum_{\ell=1}^{\infty}u_{\ell,r;i_1,i_2} \cdot  i_{M,N,r;i_1,i_2} \Big).
\end{split}
\end{equation}
\subsection{Removing the conditions on $M$ and $N$}
The explicit conditions on $M$ and $N$ in \eqref{IO1b} shall be removed
so that we may  sum  over all $M$ and $N$.  For $M$ and $N$ not satisfying the conditions 
in the summand of \eqref{IO1b} we shall show that $ i_{M,N,r;i_1,i_2}$ is small.   
If  $M \not \asymp N$, then we have $N > 3M$ or $N < M/3$.  Suppose without loss of generality that $N > 3M$. 
In the integrand in \eqref{iMNr} one effectively has both
$M \le x \le 2M$ and $N \le x-r \le 2N$,  and so
$|\log ( \frac{x}{x-r} )| = \log  ( \frac{x-r}{x}  ) \ge \log(\frac{N}{2M}) \ge \log(\frac{3}{2})$.
Therefore  by \eqref{sizerestrictiondelta}, \eqref{f*bound}, and \eqref{iMNr}, we obtain the bound
\begin{equation*}
\begin{split}
   i_{M,N,r;i_1,i_2} &  \ll \int_{M}^{2M} O_j \Big(  \frac{T^{3 \varepsilon+3 \delta}}{T_{0}^j} \Big)
   (MN)^{\delta} dx
   \ll_{j} M T^{3 \varepsilon+3 \delta} T_{0}^{-j} (MN)^{\delta} \\
   &  \ll \frac{T^{3 + \varepsilon}}{T_{0}^{1-\varepsilon}}  T^{3 \varepsilon+3 \delta} T_{0}^{-j}
   (T^{3+\e})^{\delta}
   \le T^{4 + 6 \varepsilon-\frac{3}{4}(j+1)} \ll T^{-B}
\end{split}
\end{equation*}
by choosing $j$ sufficiently large, where we have used $MN \ll T^{3+\varepsilon}$, $N \gg T_{0}^{1-\e}$
\eqref{cond3}, and $\delta< \frac{1}{6}$. 
  Thus for these $M$ and $N$ it follows, by an argument similar to that which gave \eqref{Erbound}, 
 that $ \tilde{I}_{M,N} \ll (\log T)^{-2}$. 

We now treat the case $M$ or $N \ll T_{0}^{1-\varepsilon}$. Without loss of generality we may assume $M,N \ge 2^{-\frac{1}{2}}$, $M \ll
T_{0}^{1-\varepsilon}$ and $r\ge 1$. 
Observe that we have $x \ge N+r > r \ge 1$ and thus   $|\log(\frac{x}{x-r})| \ge \log(\frac{x}{x-1}) > x^{-1} \ge (2M)^{-1} 
\gg T_{0}^{\varepsilon-1}$.    As in the calculation \eqref{fstarxr}, it follows that $f^{*}(x,x-r) \ll T^{3 \varepsilon+3 \delta} T_{0}^{-\varepsilon j}
\ll T^{-B}$, for any $B>0$ and again we obtain  $ \tilde{I}_{M,N}  \ll (\log T)^{-2}$.  
By extending the range of summation in \eqref{IO1b}, so as to include all $M,N$ that satisfy
$MN \ll T^{3 +\varepsilon}$ and have either $M \not \asymp N$ or $\min(M,N) \ll T_{0}^{1-\varepsilon}$, we just add 
 $(\log T)^2 (\log T)^{-2}  \ll 1$ to the error term there, since there are $O(\log T)$ choices for each of $M$ and $N$. 
 
Finally, we include  the contribution, $\Delta'$ from pairs $M,N$ satisfying $MN \gg T^{3 +\varepsilon}$.  By \eqref{fstarV} and Lemma \ref{Stirling} (iv)
it follows that, for any constant $A>0$, we have $f^{*}(x,y) \ll ( \frac{T^3}{xy} )^{A}$ when $xy \ge T^{3+\varepsilon}$.
Therefore, by \eqref{iMNr}, \eqref{ci1i2bd}, and \eqref{ulrbound} 
 we have
\begin{align*}
 \Delta'  
 & \ll   \sum_{r \ne 0}
   \sum_{i_1,i_2} |{\bf c}_{i_1,i_2}| \sum_{\ell=1}^{\infty} |u_{\ell,r;i_1,i_2}|
  \sum_{\substack{ M,N \\   MN \gg T^{3+\varepsilon}  \\ M-2N < r < 2M-N }}   \frac{T}{\sqrt{MN}}
  \int_{\max( M, N+r,r,0)}^{\min(2M,2N+r)}  \Big(\frac{T^3}{(x(x-r)}  \Big)^{A}  
  (MN)^{\delta}
  dx \\
  & \ll  (\log T)^4  T \sum_{ r \ne 0 } d_2(|r|) \sum_{\substack{M,N \\ MN \gg \max(T^{3+\varepsilon}, |r|)}  }  (MN)^{-\frac{1}{2}+\delta} 
  \Big( \frac{T^3}{MN} \Big)^{A} \min(M,N), 
\end{align*}
where the constant $A$ is arbitrary.  
In the last sum we put $MN=H$ and note that $\min(M,N) \le H^{\frac{1}{2}}$.
Observe that $H=2^{\frac{h-2}{2}} \gg T^{3+\varepsilon}$, for some integer 
$h=h(H) \ge 0$.  For each $H$ there are at most $h(H)+1 \ll 3 + \log(H) \ll \log H$ pairs 
$M,N$ with $MN=H$.  It follows that 
\begin{align*}
   \Delta'  
 & \ll T (\log T)^4  \sum_{H \gg T^{3+\varepsilon}}
 H^{-\frac{1}{2}+\delta} \Big( \frac{T^3}{H} \Big)^{A}
 H^{\frac{1}{2}} (\log H) \sum_{1 \le r \ll H} d_2(r)
 \ll T (\log T)^4   \sum_{H \gg T^{3+\varepsilon}}
 H^2   \Big( \frac{T^3}{H} \Big)^{A},
\end{align*}
where we have used $\delta \le \frac{1}{2}$.  
If $A > 2$, the last series converges and is bounded by a constant times the first term in the sum so that 
$\Delta' \ll T (\log T)^4  (T^{3+\varepsilon})^2  T^{-\varepsilon A}
  \ll T^{7 +3 \varepsilon-\varepsilon A}$.
By choosing $A = \frac{7}{\varepsilon}+3$, we obtain $\Delta' \ll 1$.  
All this allows us to remove the restrictions on $M$ and $N$ to obtain 
\begin{equation}
 \label{IO1expression}
  I_{O}^{(1)} = \sum_{M,N}  \tilde{I}_{M,N} + O \Big(   T^{\frac{3 \vartheta}{2}+3\varepsilon} \Big( \frac{T}{T_0} \Big)^{1+C}
  \Big)
\end{equation} 
where $\tilde{I}_{M,N}$ is given by \eqref{ItildeMN}.   
\subsection{Reintroducing the smooth partition of unity}
We simplify the last sum by rearranging summation variables so that 
\begin{equation}
  \label{ITildeMNsum}
   \sum_{M,N}  \tilde{I}_{M,N}=T \sum_{i_1=1}^{3} \sum_{i_2=1}^{3}  {\bf c}_{i_1,i_2} \sum_{0 < |r| \le R_0} \sum_{\ell=1}^{\infty} u_{\ell,r;i_1,i_2}
  \Bigg( \sum_{M,N}  \frac{1}{\sqrt{MN}} i_{M,N,r;i_1,i_2} \Bigg)
\end{equation}
where we recall that $ i_{M,N,r;i_1,i_2}$ is given by \eqref{iMNr} and ${\bf c}_{i_1,i_2},  u_{\ell,r;i_1,i_2}$ are given by 
\eqref{ci1i2ulr}.  We simplify the inner sum by exchanging summation and integration to obtain 
\begin{equation}
   \label{MNsum}
    \sum_{M,N}  \frac{1}{\sqrt{MN}} i_{M,N,r;i_1,i_2} =  
      \int_{\max(0,r)}^{\infty}  \frac{1}{\sqrt{x(x-r)}}  \Bigg( \sum_{M,N}  W_0 \Big( \frac{x}{M} \Big) W_0 \Big(  \frac{x-r}{N} \Big)  \Bigg) f^{*}(x,x-r) 
 x^{-a_{i_1}}(x-r)^{-b_{i_2}} \,  dx,
\end{equation} 
since $W(x)=x^{-\frac{1}{2}} W_0(x)$.   Observe that the function in brackets is zero if $x \le \max(0,r) +2^{-\frac{1}{2}}$
and is  equal to one for all $x >\max(0,r)+1$.  
However, this is not true if $\max(0,r) +2^{-\frac{1}{2}} < x \le \max(0,r)+1$.  
By \eqref{W0dyadic2} we have that, for $0 \ne r \in \Z,$
\[
  \sum_{M,N}  W_0 \Big( \frac{x}{M} \Big) W_0 \Big(  \frac{x-r}{N} \Big) -1 =
  \begin{cases}
  0 & \text{ if } x >  \max(0,r)+1, \\
    W_0 \Big( \frac{x -\max(0,r)}{2^{-\frac{1}{2}}} \Big) -1& \text{ if }   \max(0,r) +2^{-\frac{1}{2}}< x \le  \max(0,r)+1, \\
    - 1 & \text{ if }   \max(0,r) \le  x \le  \max(0,r)+2^{-\frac{1}{2}}.
%  W_0 \Big( \frac{x}{2^{-\frac{1}{2}}} \Big) -1& \text{ if }   \max(0,r) < x \le  \max(0,r)+1 \text{ and }  r < 0, \\
%  W_0 \Big( \frac{x-r}{2^{-\frac{1}{2}}}  \Big) -1&  \text{ if }   \max(0,r) < x \le  \max(0,r)+1 \text{ and }  r > 0.
  \end{cases}
\]
With these observations, we see that  \eqref{MNsum} becomes
\begin{equation}
\begin{split}
  \label{MNsum2}
\sum_{M,N}  \frac{1}{\sqrt{MN}} i_{M,N,r;i_1,i_2} & =  
      \int_{\max(0,r)}^{\infty}  \frac{1}{\sqrt{x(x-r)}} f^{*}(x,x-r) 
 x^{-a_{i_1}}(x-r)^{-b_{i_2}} \,  dx \\
& + O \Big( 
   \int_{\max(0,r)}^{\max(0,r)+1} 
%\Big( \sum_{M,N}  W_0 \Big( \frac{x}{M} \Big) W_0 \Big(  \frac{x-r}{N} \Big) -1\Big)
   |f^{*}(x,x-r) |
 (x-\min(0,r))^{-\frac{1}{2}+\delta}(x-\max(0,r))^{-\frac{1}{2}-\delta} \,  dx
\Big).
\end{split}
\end{equation}

The contribution from the  error term of \eqref{MNsum2} to   \eqref{ITildeMNsum} is
\begin{equation}
\begin{split}
  \label{diff1}
 & T   \sum_{i_1,i_2=1}^{3}  |{\bf c}_{i_1,i_2}|
 \sum_{1 \le |r| \le R_0}  \sum_{\ell \ge 1} |u_{\ell,r;i_1,i_2} |
 \int_{\max(0,r)}^{\max(0,r)+1} 
%\Big( \sum_{M,N}  W_0 \Big( \frac{x}{M} \Big) W_0 \Big(  \frac{x-r}{N} \Big) -1\Big)
   |f^{*}(x,x-r) |
 (x-\min(0,r))^{-\frac{1}{2}+\delta}(x-\max(0,r))^{-\frac{1}{2}-\delta} \,  dx \\
 & \ll T (\log T)^4  \sum_{1 \le |r| \le R_0}  d_2(|r|)
  \int_{\max(0,r)}^{\max(0,r)+1}  | f^{*}(x,x-r) | (x-\min(0,r))^{-\frac{1}{2}+\delta}(x-\max(0,r))^{-\frac{1}{2}-\delta} \, dx
% & \ll  T (\log T)^4  \sum_{1 \le |r| \le R_0}  d_2(|r|) 
\end{split}
\end{equation}
where we have used \eqref{ci1i2bd} and \eqref{ulrbound}. 
In the case that $r < 0$, we have that $|\log (\frac{x}{x-r})| = \log( \frac{x+|r|}{x}) \ge \log(1+|r|) \ge \log(2)$
for $\max(0,r) < x\le \max(0,r)+1$.  An analogous argument gives the same bound in the case $r >0$. 
It follows, using the bounds from which \eqref{f*bound} was deduced,
 that the last expression in \eqref{diff1} is bounded by 
\begin{equation}
\begin{split}
  \label{diff2}
 &  \ll_{\e,j}  T (\log T)^4  \sum_{1 \le |r| \le R_0}  d_2(|r|)  \int_{\max(0,r)}^{\max(0,r)+1}
   (x-\min(0,r))^{-\frac{1}{2}+\delta-\e}(x-\max(0,r))^{-\frac{1}{2}-\delta-\e} 
   T^{3 \varepsilon+3 \delta}  T_0^{-j}  \, dx \\
   &=  \frac{2 T^{1+3 \e +3 \delta} (\log T)^4 }{T_0^j  } 
    \sum_{1 \le r \le R_0}  d_2(r) \int_{0}^{1} (y+r)^{-\frac{1}{2}+\delta-\e} y^{-\frac{1}{2}-\delta-\e} dy  
   \\
   & \le  \frac{2 T^{1+3 \e +3 \delta} (\log T)^4 }{T_0^j  } 
    \sum_{1 \le r \le R_0}  d_2(r) r^{-\frac{3}{8}} 
    \int_{0}^{1} y^{-\frac{3}{4}} dy 
    \ll T^7 T_{0}^{-j} \ll 1,
\end{split}
\end{equation}
since $\delta < \frac{1}{12}$, $\e$ is sufficiently small, and in the last inequality 
$j$ is chosen sufficiently large.  In summary, we find from \eqref{ITildeMNsum}, \eqref{MNsum2}, \eqref{diff1}, and
\eqref{diff2} that
\begin{equation*}
   \sum_{M,N}  \tilde{I}_{M,N}=
   T \sum_{i_1=1}^{3} \sum_{i_2=1}^{3}  {\bf c}_{i_1,i_2} \sum_{0 < |r| \le R_0} \sum_{\ell=1}^{\infty} u_{\ell,r;i_1,i_2}
  \int_{\max(0,r)}^{\infty} f^{*}(x,x-r) 
 x^{-\frac{1}{2}-a_{i_1}}(x-r)^{-\frac{1}{2}-b_{i_2}} \,  dx + O(1).  
\end{equation*}
Combining this last expression with \eqref{IO1expression} we obtain the following.
\begin{prop}  \label{IO1prop}
Let
$a_i,b_i$  satisfy 
$|a_i|, |b_i| \ll (\log T)^{-1}$  for $i=1,2,3$ and the condition
 \eqref{initialconditionB}. 
Assume Conjecture \ref{divconj} holds for some positive $\theta$ and $C$, then for any $\varepsilon >0$
and sufficiently large $T$
 \begin{equation}
   \label{IO1sum}
   I_{O}^{(1)} = \sum_{i_1=1}^{3} \sum_{i_2=1}^{3} I_{(i_1,i_2)}^{(1)} 
   + O \Big( T^{\frac{3 \vartheta}{2}+3 \e} \Big( \frac{T}{T_0} \Big)^{1+C}
  \Big)
 \end{equation}
 where
 \begin{equation*}
 \begin{split}
    \label{I1i1i2formula}
     & I_{(i_1,i_2)}^{(1)} 
      = 
     T
     \prod_{j_1 \ne i_1} \zeta(1-a_{i_1}+a_{j_1})   \prod_{j_2 \ne i_2} \zeta(1-b_{i_2}+b_{j_2})   \\
     & \times 
     \sum_{1 \le |r| \le R_0}
     \int_{\max(0,r)}^{\infty}  f^{*}(x,x-r) x^{-\frac{1}{2}-a_{i_1}}(x-r)^{-\frac{1}{2}-b_{i_2}}  dx \cdot
    \sum_{\ell=1}^{\infty} \frac{c_{\ell}(r)G_{\I}(1-a_{i_1},\ell)G_{\J}(1-b_{i_2},\ell)  }{\ell^{2-a_{i_1}-b_{i_2}}} \\
 \end{split}
 \end{equation*}
 with  $f^{*}(x,y)$ is defined by \eqref{fstardefn} and \eqref{gIJ}. 
 \end{prop}
It is important to note that the  definition of $f^{*}(x,y)$ is independent of $r$.  
 In the next section, the main term in \eqref{IO1sum} is further simplified. 

\section{Further evaluation of $I_O^{(1)}$. Evaluation of  $ I_{(i_1,i_2)}^{(1)} $} \label{section6}
 By Proposition \ref{IO1prop} the evaluation of $I_{O}^{(1)}$ has been reduced to 
 the evaluation of $ I_{(i_1,i_2)}^{(1)}$.  In this calculation  we shall encounter the Dirichlet series
\begin{equation}
  \label{HIJa1b1}
   H_{\I,\J;\{ a_1 \} , \{ b_1 \} }(s) =   \sum_{r=1}^{\infty}
      \sum_{\ell=1}^{\infty} \frac{c_{\ell}(r)G_{\I}(1-a_1,\ell)G_{\J}(1-b_1,\ell)  }{\ell^{2-a_1-b_1} r^{a_1+b_1+2s}}.
\end{equation}
Moreover,  $ H_{\I,\J;\{ a_1 \} , \{ b_1 \} }(s)$ equals a product of $\zeta$ functions times a nice infinite product  
$\mathcal{C}_{\I,\J; \{ a_1 \}, \{ b_1 \}}(s)$.
In this section, we shall evaluate the integrals $ I_{(i_1,i_2)}^{(1)} $ under the less
restrictive conditions \eqref{sizerestrictiondelta}.
\begin{prop} \label{Hid}
Let $\delta \in (0,\frac{1}{12})$, $\e \in (0,\frac{1}{4})$,  and assume $|a_i|,|b_i| \le \delta$ for $i=1,2,3$.  \\
(i) For $\Re(s) > \frac{3}{2} \delta$, 
\begin{equation}
  \label{Hfactorization}
    H_{\I,\J; \{ a_1 \} , \{ b_1 \}}(s) =  \zeta(a_1+b_1+2s) \prod_{\substack{k_1 \ne 1 \\ k_2 \ne 1}} \zeta(1+a_{k_1}+b_{k_2}+2s)     \mathcal{C}_{\I,\J; \{ a_1 \},\{ b_1 \}}(s)
\end{equation}
where
\begin{equation}
  \label{CIJa1b1sdefn}
 \mathcal{C}_{\I,\J; \{ a_1 \}, \{ b_1 \}}(s) = \prod_{p} \mathcal{C}_{\I,\J; \{ a_1 \}, \{ b_1 \}}(p;s),
\end{equation}
\begin{equation}
   \label{lps}
   \mathcal{C}_{\I,\J; \{ a_1 \}, \{ b_1 \}}(p;s) = Q(p^{-a_2},p^{-a_3},p^{-b_2},p^{-b_3};p^{-a_1},p^{-b_1};p^{-1},p^{-2s}),
\end{equation}
and
\begin{equation}
\begin{split}
 \label{Q}
  Q(X_2,X_3,Y_2,Y_3; X_1,Y_1; U,V)
  =  
 \Bigg( 1  & + \Bigg(
 \frac{UVX_2Y_2}{1-UVX_2 Y_2} \frac{1-UX_3 X_{1}^{-1}}{1-X_3 X_{2}^{-1}} \frac{1-UY_3Y_{1}^{-1}}{1-Y_3 Y_{2}^{-1}} \\
 & + \frac{UVX_2Y_3}{1-UVX_2 Y_3} \frac{1-UX_3 X_{1}^{-1}}{1-X_3 X_{2}^{-1}} \frac{1-UY_2 Y_{1}^{-1}}{1-Y_2 Y_3^{-1}} \\
 & + \frac{UVX_3 Y_2}{1-UVX_3 Y_2} \frac{1-UX_2 X_{1}^{-1}}{1-X_2 X_{3}^{-1}} \frac{1-UY_3 Y_{1}^{-1}}{1-Y_3 Y_{2}^{-1}} \\
 & + \frac{UVX_3 Y_3}{1-UVX_3 Y_3}  \frac{1-UX_2 X_{1}^{-1}}{1-X_2 X_{3}^{-1}} \frac{1-UY_2 Y_{1}^{-1}}{1-Y_2 Y_{3}^{-1} }
 \Bigg)(1-UV^{-1} X_{1}^{-1}Y_{1}^{-1}) \Bigg)
 \\
& \times (
1- UVX_2Y_2 )(
1- UVX_2Y_3 )(
1- UVX_3Y_2 )(
1- UVX_3Y_3 ).
\end{split}
\end{equation}
(ii) We have, for $\Re(s)=\sigma \ge -\frac{1}{2}+\delta+\e$,  
\begin{equation}
  \label{CIJexpansion}
     \mathcal{C}_{\I,\J; \{ a_1 \} , \{ b_1 \}}(p;s) =1 -p^{-a_2-a_3-b_2-b_3-2-4s} +O_{\varepsilon}(p^{8 \delta+\vartheta(\sigma)})
\end{equation}
where 
\begin{equation}
   \label{thetasigma}
    \vartheta(\sigma) =
    \begin{cases}
    -2 & \text{ if } \sigma \ge 0; \\
    -2-2 \sigma & \text{ if } 0 > \sigma \ge -\frac{1}{4}; \\
    -3-6 \sigma & \text{ if } - \frac{1}{4} > \sigma
    \end{cases}
\end{equation}
and hence $\mathcal{C}_{\I,\J; \{ a_1 \}, \{ b_1 \}}(s)$ is holomorphic and absolutely convergent for $\Re(s) > -\frac{1}{4} +  \delta$.   Furthermore,  $ H_{\I,\J; \{ a_1 \} , \{ b_1 \}}(s)$ is holomorphic in $\Re(s) > -\frac{1}{4}+\delta$
with 
the exception of simple poles at 
\begin{equation}
  \label{Hpoles}
  \frac{1}{2}-\frac{(a_1+b_1)}{2} \text{ and }
  -\frac{a_{k_1}+b_{k_2}}{2} \text{ for } k_1 \ne 1, k_2 \ne 1.
\end{equation} 
\end{prop}
The following proposition shows that the local factors $\mathcal{C}_{\I,\J; \{ a_1 \}, \{ b_1 \}}(p;s)$
satisfy certain identities at special values of $s$, relating them to the local factors $A_{p;\I, \J}(s)$ which occur in 
Lemma \ref{ZIJ}.  
\begin{prop} \label{ACidentities} 
Let $\I = \{a_1, a_2, a_3 \}$ and $\J = \{b_1, b_2, b_3 \}$.  
We have the identities
\begin{itemize}
\item[(i)]
$ \cA_{\I_{\{a_1 \}}, \J_{ \{ b_1 \} }}(0) =  
  \cC_{\I,\J; \{ a_1 \} , \{ b_1 \} }(0),$
\item[(ii)]
$ \cA_{\I,\J}(-a_1-b_1) =   \cC_{\I,\J; \{ a_1 \} , \{ b_1 \}}( -\tfrac{a_1+b_1}{2}),$ 
\item[(iii)]   
$ \cC_{\I,\J; \{a_1 \}, \{ b_1 \} }(-\tfrac{a_{2}+b_{2}}{2})=
     \cC_{-\J,-\I; \{-b_{3}\}, \{ -a_{3} \}}(\tfrac{b_{2}+a_{2}}{2}).$
\item[(iv)]   
Let $(i_1,i_2) \in \{ 1,2,3 \}^2$ and $(k_1,k_2) \in \{ 1,2,3 \}^2$ with $k_1 \ne i_1$ and $k_2 \ne i_2$. 
Then 
$$ \cC_{\I,\J; \{a_{i_1} \}, \{ b_{i_2} \} }(-\tfrac{a_{k_1}+b_{k_2}}{2})=
     \cC_{-\J,-\I; \{-b_{r_2}\}, \{ -a_{r_1} \}}(\tfrac{b_{k_2}+a_{k_1}}{2})$$
where       $r_1 =r(i_1,k_1)$ and $r_2=r(i_2,k_2)$ are defined as follows:   
\begin{equation}
\begin{split}
  \label{rdefinition}
   & \text{ Given distinct elements } i,k \text{ of }  \{1,2,3\},  \text{ then } r=r(i,k) \text{ is the unique number } r \\
   & \text{ such that }  \{ 1, 2, 3\} = \{ i, k , r\}. 
\end{split}
\end{equation}
\end{itemize}
\end{prop}
We shall prove Proposition \ref{Hid} and \ref{ACidentities} in Appendix \ref{appendix1}. Regarding, in particular,
our proof of Proposition \ref{ACidentities}: the proof of part (i) shall be reduced to the verification of 
a polynomial identity and (ii) and (iii) are deduced from (i).  
Alternately
parts (i) and (ii) also follow from an identity in  \cite[Sections 3,4]{CK3}.  
In fact, the argument in \cite{CK3} establishes such identities for general sets 
$\I = \{a_1, a_2, \ldots, a_k \}$ and $\J = \{b_1, b_2, \ldots, b_k \}$.
Next we have a result (also to be proved in Appendix \ref{appendix1}) which gives useful bounds for $H_{\I,\J;\{ a_1\}, \{ b_1 \}}(s)$.
\begin{lem} \label{Hbounds}
Let $\delta \in (0,\frac{1}{7})$ and assume  assume $|a_i|,|b_i| \le \delta$ for $i=1,2,3$.
\begin{itemize}
\item[(i)]  For $\Re(s) = 2 \delta$, we have 
\begin{equation*}
  \label{Hbd}
  |H_{\I,\J;\{ a_1\}, \{ b_1 \}}(s)| 
  \ll |s|^{\frac{1}{2}}.
\end{equation*}
\item[(ii)]  For $s$ satisfying $|\Re(s)-\frac{1}{2}| \le 2 \delta$ and $\Im(s) \asymp T$, then 
\begin{equation*}
  \label{Hbd2}
    |H_{\I,\J;\{ a_1\}, \{ b_1 \}}(s)| \ll T^{4 \delta}. 
\end{equation*}
\end{itemize}
\end{lem}
We now state our formulae for $I_{O}^{(1)}$ and $I_{O}^{(2)}$. The proofs of Propositions \ref{I1i1i2} and \ref{I2i1i2}
which follow shall make use of the previous propositions and lemma. 
  \begin{prop}  \label{I1i1i2}
 Let
$a_i,b_i$  satisfy 
$|a_i|, |b_i| \ll (\log T)^{-1}$,  for $i=1,2,3$ and  the conditions \eqref{initialconditionA}, 
 \eqref{initialconditionB}, and  \eqref{initialconditionC}.
Assume Conjecture \ref{divconj} holds for some positive $\theta \ge \frac{1}{2}$ and $C$.
Then, for any $\varepsilon >0$
and  sufficiently large $T$, one has
  \begin{equation}
   \label{IO1sumb}
    I_{O}^{(1)}
%   \sum_{i_1=1}^{3} \sum_{i_2=1}^{3} I_{(i_1,i_2)}^{(1)} 
   =  \sum_{i_1=1}^{3} \sum_{i_2=1}^{3} J_{(i_1,i_2)}^{(1)} 
  + O\Big(  T^{\frac{3 \vartheta}{2}+ \varepsilon} \Big( \frac{T}{T_0} \Big)^{1+C} \Big)
%   + O \Big( T^{\frac{3 \vartheta}{2}+\varepsilon} \Big( \frac{T}{T_0} \Big)^{1+C}
 \end{equation}
 where 
\begin{equation}
\begin{split}  
  \label{J1i1i2id}
 J_{(i_1,i_2)}^{(1)}  &=  \mathcal{Z}_{\I_{\{a_{i_1} \}}, \J_{ \{ b_{i_2} \} }}(0) \cdot
 \int_{-\infty}^{\infty} \omega(t)   \Big( \frac{t}{2 \pi} \Big)^{-a_{i_1}-b_{i_2}}  dt \\
&  -\mathrm{Res}_{s=  \frac{-a_{i_1} -b_{i_2}}{2}} \mathcal{Z}_{\I,\J}(2s) \frac{G \Big( \frac{-a_{i_1} -b_{i_2}}{2}  \Big)}{( \frac{-a_{i_1} -b_{i_2}}{2} )}
  \int_{-\infty}^{\infty} \omega(t) 
  \Big( 
  \frac{t}{2 \pi}
  \Big)^{-\frac{3}{2}(a_{i_1}+b_{i_2})}dt \\
  &+   \sum_{ \substack{ (k_1,k_2) \\
 k_1 \ne i_1, k_2 \ne i_2}} \mathcal{T}_{(i_1,i_2);(k_1,k_2)}
\end{split}
\end{equation}
and 
\begin{equation} 
\begin{split}
  \label{T}
  \mathcal{T}_{(i_1,i_2);(k_1,k_2)} & = 
\frac{1}{2}
  \prod_{j_1 \ne i_1}\zeta(1-a_{i_1}+a_{j_1}) \prod_{j_2 \ne i_2} \zeta(1-b_{i_2}+b_{j_2})  
 \int_{-\infty}^{\infty} \omega(t) \Big( \frac{t}{2 \pi} \Big)^{-a_{i_1}-b_{i_2} -\frac{a_{k_1}+b_{k_2}}{2}} dt   \\
  & \times  \Big(
  \prod_{{\begin{substack}{ l_1 \ne i_1, \l_2 \ne i_2
         \\ (l_1,l_2) \ne (k_1,k_2)}\end{substack}}} 
      \zeta(1+a_{l_1}+b_{l_2}-a_{k_1}-b_{k_2}) \Big)
      \zeta(1-a_{i_1}-b_{i_2}+a_{k_1}+b_{k_2}) \\
   & \times   \mathcal{C}_{\I,\J; \{a_{i_1} \}, \{ b_{i_2} \} }(-\tfrac{a_{k_1}+b_{k_2}}{2})
      \frac{G(-\frac{a_{k_1}+b_{k_2}}{2})}{(-\frac{a_{k_1}+b_{k_2}}{2})}.
\end{split}
\end{equation}
\end{prop}
\noindent {\bf Remark}.  (i) It should be observed that the formulae for $J_{(i_1,i_2)}^{(1)}$ contains
the extra unwanted residues $ \mathcal{T}_{(i_1,i_2);(k_1,k_2)}$.  Notice that these do not
appear in the formula for $I_{O}^{(1)}+I_{O}^{(2)}$ given by \eqref{IOasymp}. \\
(ii)  The proof of this Proposition actually establishes unconditionally that one has $ \sum_{i_1=1}^{3} \sum_{i_2=1}^{3} I_{(i_1,i_2)}^{(1)} 
   =  \sum_{i_1=1}^{3} \sum_{i_2=1}^{3} J_{(i_1,i_2)}^{(1)}  + O(T^{\frac{3}{4}+5 \delta}) $
   when $|a_i|,|b_i| \le \delta < \frac{1}{12}$ for $i=1,2,3$.   \\
(iii) On the Riemann hypothesis it is possible to improve the last error term to $ O(T^{\frac{2}{3}+5 \delta})$. 
This is since we can show $
   \mathcal{C}_{\I,\J; \{ a_1 \} , \{ b_1 \}}(s) 
   =\zeta(2+4s + a_2+b_2+a_3+b_3)^{-1}  \mathcal{D}_{\I,\J; \{ a_1 \} , \{ b_1 \}}(s)$ 
where $ \mathcal{D}_{\I,\J; \{ a_1 \} , \{ b_1 \}}(s)$ is an Euler product which is absolutely convergent in 
$\Re(s) \ge -\frac{1}{3} + 2 \delta$.  \\

We can also prove an analogous result for $I_{O}^{(2)}$.  
Note that by \eqref{IO2} and \eqref{IO1} we see that $I_{O}^{(2)}$  is the same as $I_{O}^{(1)}$,
 except that $\I \to -\J$ and $\J \to -\I$ and there is the additional factor of $X_{\I,\J;t}$.   Thus 
equations \eqref{IOsumc}, \eqref{I2i1i2id}, and \eqref{U} below will be obtained from \eqref{IO1sumb}, \eqref{J1i1i2id},
and \eqref{T} by replacing each $\I$ by $-\J$, each $\J$ by $-\I$,
 and inserting the factor $( \frac{t}{2 \pi} )^{-\sum_{i=1}^{3} (a_i+b_i)} $ which arises from the Stirling 
 approximation for $X_{\I,\J;t}$ as derived in Lemma \ref{Stirling}. 
\begin{prop} \label{I2i1i2}
Let
$a_i,b_i$  satisfy 
$|a_i|, |b_i| \ll (\log T)^{-1}$  for $i=1,2,3$ and the conditions \eqref{initialconditionA}, 
 \eqref{initialconditionB}, and  \eqref{initialconditionC}.  
Assume Conjecture \ref{divconj} holds for some positive $\theta \ge \frac{1}{2}$ and $C$. Then, for any $\varepsilon >0$
and sufficiently large $T$, one has
\begin{equation}
  \label{IOsumc}
  I_{O}^{(2)} = \sum_{i_1=1}^{3} \sum_{i_2=1}^{3} J_{(i_1,i_2)}^{(2)} 
   + O\Big(  T^{\frac{3 \vartheta}{2}+ \e } \Big( \frac{T}{T_0} \Big)^{1+C} \Big)
\end{equation}
where 
\begin{equation}
\begin{split}
  \label{I2i1i2id}
 J_{(i_1,i_2)}^{(2)} & = 
   \mathcal{Z}_{\I_{\I \backslash \{ a_{i_2} \} }, \J_{ \J \backslash \{ b_{i_1} \} }}(0) \cdot
  \int_{-\infty}^{\infty}  \omega(t)   \Big( \frac{t}{2 \pi} \Big)^{-
\sum_{k \ne i_2} a_k - \sum_{k \ne i_1} b_k}  dt \\
  &  -\mathrm{Res}_{s=  \frac{b_{i_1} +a_{i_2}}{2}} \mathcal{Z}_{-\J,-\I}(2s) \frac{G \Big( \frac{b_{i_1} +a_{i_2}}{2}  \Big)}{( \frac{b_{i_1}+a_{i_2}}{2} )}
  \int_{-\infty}^{\infty} \omega(t) 
  \Big( 
  \frac{t}{2 \pi}
  \Big)^{
  -\sum_{k=1}^{3}(a_k+b_k)
  +\frac{3(b_{i_1}+a_{i_2})}{2}}dt  \\
  & +  \sum_{ \substack{ (k_1,k_2) \\
 k_1 \ne i_1, k_2 \ne i_2}} \mathcal{U}_{(i_1,i_2);(k_1,k_2)} 
\end{split}
\end{equation}
and
\begin{equation}
\begin{split}
  \label{U}
  \mathcal{U}_{(i_1,i_2);(k_1,k_2)} & = 
\frac{1}{2}
  \prod_{j_1 \ne i_1}\zeta(1+b_{i_1}-b_{j_1}) \prod_{j_2 \ne i_2} \zeta(1+a_{i_2}-a_{j_2})  
 \int_{-\infty}^{\infty} \omega(t) \Big( \frac{t}{2 \pi} \Big)^{-a_{r(i_2,k_2)}-b_{r(i_1,k_1)} -\frac{b_{k_1}+a_{k_2}}{2}} dt   \\
  & \times  \Big(
  \prod_{{\begin{substack}{l_1 \ne i_1, l_2 \ne i_2
         \\ (l_1,l_2) \ne (k_1,k_2)}\end{substack}}} 
      \zeta(1-b_{l_1}-a_{l_2}+b_{k_1}+a_{k_2}) \Big)
      \zeta(1+b_{i_1}+a_{i_2}-b_{k_1}-a_{k_2}) \\
   & \times   \mathcal{C}_{-\J,-\I; \{-b_{i_1}\}, \{ -a_{i_2} \}}(\tfrac{b_{k_1}+a_{k_2}}{2})
      \frac{G(\frac{b_{k_1}+a_{k_2}}{2})}{(\frac{b_{k_1}+a_{k_2}}{2})}
\end{split}
\end{equation}
where $r(i_1,k_1)$ and $r(i_2,k_2)$  are defined in \eqref{rdefinition}. 
\end{prop}
\noindent {\bf Remark}. Notice that the formulae for $I_{(i_1,i_2)}^{(2)}$ also contain
extra unwanted residues $ \mathcal{U}_{(i_1,i_2);(k_1,k_2)}$ that do not appear in
\eqref{IOasymp}.  Fortunately, we shall establish that the   $\mathcal{T}_{(i_1,i_2);(k_1,k_2)}$
and $ \mathcal{U}_{(i_1,i_2);(k_1,k_2)}$ cancel each other out. \\
\begin{prop}  \label{sumtozero}
We have that
\begin{equation*}
  \sum_{i_1=1}^{3} \sum_{i_2=1}^{3} \sum_{ \substack{ (k_1,k_2) \\
 k_1 \ne i_1, k_2 \ne i_2}} ( \mathcal{T}_{(i_1,i_2);(k_1,k_2)}+\mathcal{U}_{(i_1,i_2);(k_1,k_2)})=0.
\end{equation*}
\end{prop}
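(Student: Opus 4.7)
\textit{Proof proposal.} The plan is to exhibit an explicit involution $\sigma$ on the index set
$\mathcal{I} := \{(i_1,i_2,k_1,k_2) : i_1,i_2 \in \{1,2,3\},\ k_1 \ne i_1,\ k_2 \ne i_2\}$
and to establish $\mathcal{T}_{(i_1,i_2);(k_1,k_2)} = -\mathcal{U}_{\sigma(i_1,i_2,k_1,k_2)}$ tuple-by-tuple; summing over $\mathcal{I}$ and using $\sigma(\mathcal{I})=\mathcal{I}$ then yields the proposition at once. The shape of the $\cC$-factors forces the natural candidate $\sigma(i_1,i_2,k_1,k_2) := (r(i_2,k_2),\,r(i_1,k_1),\,k_2,\,k_1)$, and the identity $r(r(i,k),k)=i$ verifies that $\sigma$ is a well-defined involution on $\mathcal{I}$.

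The foundational ingredient is a generalization of Proposition \ref{ACidentities}(iii) beyond the single index triple treated there. Since the polynomial $Q$ of (\ref{Q}) is symmetric in each of its non-distinguished pairs $(X_2,X_3)$ and $(Y_2,Y_3)$, the same polynomial manipulations that yield part (iii) extend to give
\begin{equation*}
\cC_{\I,\J;\{a_{i_1}\},\{b_{i_2}\}}\bigl(-\tfrac{a_{k_1}+b_{k_2}}{2}\bigr) = \cC_{-\J,-\I;\{-b_{r(i_1,k_1)}\},\{-a_{r(i_2,k_2)}\}}\bigl(\tfrac{a_{k_1}+b_{k_2}}{2}\bigr)
\end{equation*}
for every $(i_1,i_2,k_1,k_2) \in \mathcal{I}$. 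This is exactly what is needed to identify the $\cC$-factor of $\mathcal{T}_{(i_1,i_2);(k_1,k_2)}$ with that of $\mathcal{U}_{\sigma(i_1,i_2,k_1,k_2)}$.

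The remaining analytic comparisons under $\sigma$ now fall in place: with $(i'_1,i'_2,k'_1,k'_2)=\sigma(i_1,i_2,k_1,k_2)$ the exponent $-a_{r(i'_2,k'_2)}-b_{r(i'_1,k'_1)}-(b_{k'_1}+a_{k'_2})/2$ in the $\mathcal{U}$-integral collapses to $-a_{i_1}-b_{i_2}-(a_{k_1}+b_{k_2})/2$, so the integrals themselves coincide; and since $G$ is even, the quotient $G(-x)/(-x)=-G(x)/x$ contributes the single sign flip producing the minus sign in $\mathcal{T}=-\mathcal{U}_\sigma$.

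What remains is the combinatorial matching of the eight $\zeta$-factors in $\mathcal{T}_{(i_1,i_2);(k_1,k_2)}$ with those in $\mathcal{U}_{\sigma(i_1,i_2,k_1,k_2)}$. Six of them are simple pairwise differences $\zeta(1+x-y)$ with $x,y\in\{a_{i_1},a_{k_1},a_{r(i_1,k_1)},b_{i_2},b_{k_2},b_{r(i_2,k_2)}\}$ and line up by direct inspection after substituting the mapped indices; the delicate step is pairing $\mathcal{T}$'s extra factor with one of $\mathcal{U}_\sigma$'s third-product factors, and identifying $\mathcal{T}$'s fully mixed factor $\zeta(1+a_{r(i_1,k_1)}+b_{r(i_2,k_2)}-a_{k_1}-b_{k_2})$ with $\mathcal{U}_\sigma$'s extra factor. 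This last identification is the principal obstacle -- it is likely to force the precise form of the generalized $\cC$-identity above, and potentially requires minor re-indexing of the involution $\sigma$ or absorbing a spurious factor by pairing neighbouring terms in the sum. Once these matchings are confirmed, the bijection argument concludes the proof.
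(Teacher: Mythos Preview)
Your proposal is correct and takes essentially the same approach as the paper: pair each $\mathcal{T}$-term with a $\mathcal{U}$-term via an explicit bijection on the index set, reduce the match to the generalized form of Proposition~\ref{ACidentities}(iii), and verify that the remaining $\zeta$-, $G$-, and integral factors coincide up to the single sign coming from $G$ being even. The paper does this by first working out the representative case $\mathcal{T}_{(1,1);(2,2)}+\mathcal{U}_{(3,3);(2,2)}=0$ in full (matching all eight $\zeta$-factors by inspection), then asserting the general pairing $\mathcal{T}_{(i_1,i_2);(k_1,k_2)}+\mathcal{U}_{(r_1,r_2);(k_1,k_2)}=0$ with $r_j=r(i_j,k_j)$, reducing it to the permuted $\cC$-identity. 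Your involution $\sigma(i_1,i_2,k_1,k_2)=(r(i_2,k_2),r(i_1,k_1),k_2,k_1)$ is in fact the correct general form once the $a\leftrightarrow b$ swap built into the $\mathcal{U}$-indexing is tracked through (the paper's general statement contains index typos that are invisible in the symmetric worked case $i_1=i_2$, $k_1=k_2$); your check that the integral exponent and the $G$-argument collapse to those of $\mathcal{T}$ under $\sigma$ confirms this. The ``principal obstacle'' you hedge about is not real: the eight $\zeta$-factors match one-to-one under $\sigma$ by direct substitution, exactly as in the paper's explicit case, so no re-indexing or absorption of spurious factors is required.
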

Now that we have stated Propositions \ref{I1i1i2}, \ref{I2i1i2}, and \ref{sumtozero} we can 
complete the proof of Proposition \ref{offdiagonal}. 
\begin{proof}[Proof of Proposition \ref{offdiagonal}]
By combining Propositions \ref{I1i1i2}, and \ref{I2i1i2} 
we see that we get exactly the first three terms in \eqref{IOasymp}
plus 
\begin{equation*}
  \label{extraterms}
   \sum_{i_1=1}^{3} \sum_{i_2=1}^{3} \sum_{ \substack{ (k_1,k_2) \\
 k_1 \ne i_1, k_2 \ne i_2}} ( \mathcal{T}_{(i_1,i_2);(k_1,k_2)}+\mathcal{U}_{(i_1,i_2);(k_1,k_2)})
 +O \Big( T^{\frac{3 \vartheta}{2}+ \e} \Big( \frac{T}{T_0} \Big)^{1+C} 
    \Big). 
\end{equation*}
Moreover, Proposition \ref{sumtozero} shows that the sum in \eqref{extraterms} equals 0.  Thus we establish 
\eqref{IOasymp}.
\end{proof}
 
 \begin{proof}[Proof of Proposition \ref{I1i1i2}]
 Let $\delta \in (0,\tfrac{1}{12})$ and let $\e >0$ satisfy 
 \begin{equation}
    \label{deltaepsineq}
    \delta < \e < \tfrac{1}{2} - \delta.
 \end{equation}
 We shall begin by assuming that $ |a_i|, |b_i| \le \delta$ for $i=1,2,3$.
 At the end of the argument we shall assume the more restrictive condition $ |a_i|, |b_i|  \ll (\log T)^{-1}$ 
 for $i=1,2,3$. 
 We  focus on one of the nine terms in the sum \eqref{IO1sum}: the one with $i_1=i_2=1$.  We will obtain 
 the result for other indices just by permuting them appropriately.  
 Recall that in \eqref{R0} we set $R_0 = T^{5}$.  Recall also the definition given in Proposition 
 \ref{IO1prop} for the term $I_{(i_1,i_2)}^{(1)}$ inside the sum on the right hand side of \eqref{IO1sum}.
 Considering the term in the case $i_1=i_2=1$,
we write $I_{(1,1)}^{(1)}= I^{+} + I^{-}$ where $I^{+}$ is the sum over $r >0$ and $I^{-}$ is the sum 
over $r <0$.   Since $c_{\ell}(-r)=c_{\ell}(r)$ (for all $\ell, r \in \mathbb{N}$), we find that 
\begin{align*}
 I^{\pm} & =  \sum_{1 \le r \le R_0} 
     \prod_{j_1 \ne 1} \zeta(1-a_1+a_{j_1})   \prod_{j_2 \ne 1} \zeta(1-b_1+b_{j_2})  
      \sum_{\ell=1}^{\infty} \frac{c_{\ell}(r)G_{\I}(1-a_1,\ell)G_{\J}(1-b_1,\ell)  }{\ell^{2-a_1-b_1}} \\
    & \times \int_{\max(0,\pm r)}^{\infty} x^{-\frac{1}{2}-a_1}(x \mp r)^{-\frac{1}{2}-b_1} 
     \int_{(\varepsilon)} \frac{G(s)}{2 \pi i s} 
     \Big( \frac{1}{\pi^3 x(x \mp r)} \Big)^{s}
     \int_{-\infty}^{\infty} 
     \Big( 1 \mp \frac{r}{x} \Big)^{it} g_{\I,\J}(s,t) \omega(t) dt ds dx
\end{align*}
for both of the two possible fixed choices of opposing signs $\pm, \mp$. 
(in $I^{-}$ we made the variable change $r \to -r$).
We let $K^{+}$ and $K^{-}$ denote the triple integrals appearing in $I^{+}$ and $I^{-}$. 
In $K^{\pm}$ we make the change of variables $x=ry$ to obtain 
\begin{align*}
  & K^{\pm} 
   = r^{-a_1-b_1} \int_{\max(0,\pm1)}^{\infty}
     y^{-\frac{1}{2}-a_1}(y\mp1)^{-\frac{1}{2}-b_1}
     \int_{(\varepsilon)} \frac{G(s)}{2 \pi i s}
     \Big( 
     \frac{1}{\pi^3 r^2 y (y\mp1)}
     \Big)^s
    \int_{-\infty}^{\infty} 
     \Big( \frac{y \mp 1}{y} \Big)^{it}  g_{\I,\J}(s,t) \omega(t)dtdsdy.
\end{align*}
For $\eta = \pm 1$, we have 
\begin{equation}
  \label{Bsteta}
  \mathcal{B}_{s,t}(\eta) := \int_{\max(0,\eta)}^{\infty}
   y^{-\frac{1}{2}-a_1-s-it} (y -\eta)^{-\frac{1}{2}-b_1-s+it} dy
   = \begin{cases}
   B(\frac{1}{2}-b_1-s+it, a_1+b_1+2s) & \text{ if } \eta =1, \\
   B(\frac{1}{2}-a_1-s-it, a_1+b_1+2s) & \text{ if } \eta =-1, 
   \end{cases}
\end{equation}
where, by Euler's beta function identity 
$B(u,v) = \int_{0}^{\infty} y^{u-1}(1+y)^{-u-v}dy=\frac{\Gamma(u)\Gamma(v)}{\Gamma(u+v)}$ for $\Re(u),\Re(v) >0$.
Observe that the convergence regions for the  $\mathcal{B}_{s,t}(\eta)$ integrals in \eqref{Bsteta} are well-defined only if 
\begin{equation}
  -\frac{1}{2} (\Re(a_1)+\Re(b_1)) < \e =\Re(s) < \frac{1}{2}+ \max(  -\Re(a_1),-\Re(b_1)). 
\end{equation}
However, \eqref{deltaepsineq} implies that this condition holds.  
Hence, by a change in order of integration, we find that 
\[
  K^{\pm} = r^{-a_1-b_1} \int_{-\infty}^{\infty} \omega(t) \int_{(\varepsilon)} 
  \frac{G(s)}{2 \pi i s} \Big( \frac{1}{\pi^3 r^2} \Big)^{s}
  g_{\I,\J}(s,t)  \mathcal{B}_{s,t}(\pm 1) ds dt.  
\]
Recalling that $K^{\pm}$ is the triple integral appearing in the sum $I^{\pm}$, above, and that 
$I_{(1,1)}^{(1)}= I^{+} + I^{-}$, we therefore find (with the help of the above Beta function identity) 
that  
\begin{equation}
\begin{split}
  \label{I111}
 & I_{(1,1)}^{(1)} = \prod_{j_1 \ne 1} \zeta(1-a_1+a_{j_1})   \prod_{j_2 \ne 1} \zeta(1-b_1+b_{j_2})  
     \sum_{1 \le r \le R_0}
      \sum_{\ell=1}^{\infty} \frac{c_{\ell}(r)G_{\I}(1-a_1,\ell)G_{\J}(1-b_1,\ell)  }{\ell^{2-a_1-b_1} r^{a_1+b_1}} \\
    & \int_{-\infty}^{\infty} \omega(t)
     \frac{1}{2 \pi i} \int_{(\varepsilon)} \frac{G(s)}{s} g_{\I,\J}(s,t)
     \Big( 
     \frac{1}{\pi^3 r^2}
     \Big)^s  \Gamma(a_1+b_1+2s)
  \Bigg( 
   \frac{\Gamma(\tfrac{1}{2}-b_1-s+it)}{\Gamma(\tfrac{1}{2}+a_1+s+it)}+
  \frac{\Gamma(\tfrac{1}{2}-a_1-s-it)}{\Gamma(\tfrac{1}{2}+b_1+s-it)}
\Bigg)
          ds \, dt.
\end{split}
\end{equation}

We now move the $s$ integral right to the line $\Re(s)=1$;
this is in order that we may later be in  a position to apply Proposition \ref{Hid} (i).
Observe that the term in the brackets has simple poles at 
$p_1 = \frac{1}{2}-b_1+it$  and $p_2 = \frac{1}{2} -a_1-it$.    
Using $\Gamma(z) \sim z^{-1}$ as $z \to 0$, 
we find that the residue at $s=p_j$ is
$-\frac{G(p_j)}{p_j} g_{\I,\J}(p_j,t)
     ( \pi^3 r^2)^{-p_j}$ for $j=1,2$.    
Using Lemma \ref{Stirling} (ii), we find that when $c_1T \le t \le c_2 T$ the contribution of each 
residue to the $t$ integrand in \eqref{I111} is
\[
   \frac{G(p_j)}{p_j} g_{\I,\J}(p_j,t)
        ( \pi^3 r^2)^{-p_j} 
      \ll    \frac{|G(p_j)|}{t} t^{\frac{5}{2}}  r^{-1+2 \delta} 
      \ll |G(p_j)| t^{\frac{3}{2}} r^{-1+2 \delta} 
\]
(it being assumed here that $r$ satisfies the conditions of summation in \eqref{I111} where 
$R_0 = T^{5}$).  For $j=1,2$, we have both $|\Re(p_j) - \frac{1}{2}| \le \delta$ and $|p_j| \asymp t$, 
and so $G(p_j) \ll t^{-B}$.   
Hence the total contribution of these residues to \eqref{I111} is 
\begin{equation*}
\begin{split}
  \sum_{1 \le r \le R_0} \frac{ {\bf c}_{1,1}}{r^{a_1+b_1}}
  \int_{c_1 T}^{c_2 T}  O( t^{\frac{3}{2}-B} r^{-1+2 \delta}) dt \cdot \sum_{\ell=1}^{\infty} u_{\ell,r;1,1}
  & = \sum_{1 \le r \le T^{5}} 
  O( (\log T)^4 r^{4 \delta} T^{\frac{5}{2}-B} r^{-1} d_2(r) ) \\
  & \ll (\log T)^4  T^{\frac{5}{2}-B} \sum_{1 \le r \le T^{5}} \frac{d_2(r) r^{4 \delta}}{r}
  \ll  T^{\frac{5}{2}+20 \delta -B}  (\log T)^6
\end{split}
\end{equation*}
where we have used the bounds
\eqref{ci1i2bd} and  \eqref{ulrbound}.  Since $\delta < \frac{1}{12}$ we may choose  $B=5$ 
to obtain 
\begin{equation}
\begin{split}
  \label{I111b}
 & I_{(1,1)}^{(1)} = \prod_{j_1 \ne 1} \zeta(1-a_1+a_{j_1})   \prod_{j_2 \ne 1} \zeta(1-b_1+b_{j_2})  
     \sum_{1 \le r \le R_0}
      \sum_{\ell=1}^{\infty} \frac{c_{\ell}(r)G_{\I}(1-a_1,\ell)G_{\J}(1-b_1,\ell)  }{\ell^{2-a_1-b_1} r^{a_1+b_1}} \\
    & \int_{-\infty}^{\infty} \omega(t)
     \frac{1}{2 \pi i} \int_{(1)} \frac{G(s)}{s} g_{\I,\J}(s,t)
     \Big( 
     \frac{1}{\pi^3 r^2}
     \Big)^s  \Gamma(a_1+b_1+2s)
  \Bigg( 
   \frac{\Gamma(\tfrac{1}{2}-b_1-s+it)}{\Gamma(\tfrac{1}{2}+a_1+s+it)}+
  \frac{\Gamma(\tfrac{1}{2}-a_1-s-it)}{\Gamma(\tfrac{1}{2}+b_1+s-it)}
\Bigg)
          ds \, dt \\
          & + O(1 ).
\end{split}
\end{equation}  

At this point we will add back in those terms with $r > R_0$.  However, we require the following consequence of Stirling's formula.
Let $\Re(s) \in  [2 \delta,0.49] \cup [0.51,1] $, $|\Im(s)| \le t+1$,  and $|a_1|, |b_1| \le \delta$
where $\delta \le \frac{1}{6}$.  Then 
\begin{equation}
  \label{stircritical}
  \frac{\Gamma(p_j-s)}{\Gamma(a_1+b_1+p_j+s)} 
  = t^{-a_1-b_1-2s}
  \exp 
  ( (-1)^j \tfrac{\pi i}{2}(a_1+b_1+2s)
  )
  (1 + O ( \tfrac{1+|s|^2}{t^{1-2 \delta}} ) )
  \text{ for } j=1,2. 
\end{equation}
The proof of  \eqref{stircritical} is very similar to the proof of Lemma \ref{Stirling} (ii) and is left as an exercise.
We deduce
\begin{equation}
   \label{gammastirlingB}
   \sum_{j=1}^{2} \frac{\Gamma(p_j-s)}{\Gamma(a_1+b_1+p_j+s)} 
  = 2t^{-a_1-b_1-2s}  \cos 
  ( \tfrac{\pi }{2}(a_1+b_1+2s)) 
  +O \Big(  t^{-2\Re(s)}   e^{\pi |\Im(s)|}  (  \tfrac{1+|s|^2}{t^{1-4\delta}} )
  \Big)
\end{equation}
for $\Re(s) \in  [2 \delta,0.49] \cup [0.51,1] $ and $|\Im(s)| \le t+1$.  
Note that we have a weak form of Stirling's formula, namely
\begin{equation}
  \label{gammazbd}
|\Gamma(x+iy)| \asymp |y|^{x-\frac{1}{2}} e^{-\frac{\pi}{2}|y|} \text{ for } |y| \ge \frac{1}{2}, -4 \le x \le 4.
\end{equation}
This implies that, for $j=1,2$, we have 
\begin{equation}
  \label{gammabound}
  \frac{\Gamma(p_j-s)}{\Gamma(a_1+b_1+p_j+s)}  
  \ll \frac{\exp((-1)^{j-1} \text{sgn}(\Im(s)) \pi t  ) }{(|\Im(s)|^2-t^2)^{\Re(s)-\delta }}
  \text{  for } |\Im(s)| \ge t+1, -1 \le \Re(s) \le 2. 
 \end{equation}
Again by \eqref{gammazbd} along with a trivial estimate for $|\Im(s)| \le 2$ we obtain 
\begin{equation}
  \label{gammaoneline}
 |\Gamma(a_1+b_1+2s)| \ll |s|^{\frac{3}{2}+2 \delta} e^{-\pi |\Im(s)|} \text{ for } 
 \Re(s)=1.
\end{equation}
%\begin{equation}
%  \label{gammabound}
%  \Big| \frac{\Gamma(\tfrac{1}{2}-b_1-s+it)}{\Gamma(\tfrac{1}{2}+a_1+s+it)} \Big|
%   \ll \frac{ e^{\pi \sgn(\Im(s)) t}}{ (|\Im(s)|^2 -t^2)^{\sigma}} \text{ and }
%  \Big| \frac{\Gamma(\tfrac{1}{2}-a_1-s-it)}{\Gamma(\tfrac{1}{2}+b_1+s-it)}
%  \Big|  \ll \frac{ e^{-\pi \sgn(\Im(s)) t}}{ (|\Im(s)|^2 -t^2)^{\sigma}} \text{ for } 
%   |\Im(s)| \ge t+1.
%\end{equation}
We also require the bound
\begin{equation}
  \label{coszbd}
|\cos(\tfrac{\pi}{2}z)| \ll e^{\frac{\pi}{2} |\Im(z)|}  \text{ for }  z  \in \mathbb{C}.
\end{equation}
%By Lemma \ref{Stirling} (ii),  \eqref{gammastirling}, and \eqref{gammabound} it follows that the $s$-integral in \eqref{I111b} is bounded by 
%\begin{align*}
%  & \int_{\substack{\Re(s)=1 \\ |\Im(s)| \le t+1}} \frac{|G(s)|}{|s|} \Big(\frac{t}{2} \Big)^{3} (1+O(|s|^2 t^{-1})) 
%      |\Gamma(a_1+b_1+2s)|   t^{-1.99} |\cos (\tfrac{\pi}{2}(a_1+b_1+2s) )| 
%          ds \\
% & +  \int_{\substack{\Re(s)=1 \\ |\Im(s)| \ge t+1}} \frac{|G(s)|}{|s|} \Big(\frac{t}{2} \Big)^{3} (1+O(|s|^2 t^{-1})) 
%      |\Gamma(a_1+b_1+2s)|    \frac{ e^{\pi \sgn(\Im(s)) t}+e^{-\pi \sgn(\Im(s)) t}}{ (|\Im(s)|^2 -t^2)^{\sigma}}
%          ds  \\
%  & \ll  \int_{ |u| \le t+1} |u|^{1-B}t^{1.01} (1+O(|u|^2 t^{-1})) 
%     |u|^{3 -\frac{1}{2}} 
%          du   \\
%    &  +  \int_{|u| \ge t+1}  u^{1-B} t^3 \Big(1+\frac{u^2}{t} \Big) u^{1.51}  \frac{( e^{\pi \sgn(2u) t}+e^{-\pi \sgn(2u) t})}{ (u^2 -t^2)}     e^{-\pi |u|} \, du 
%          \ll T^{1.01} + T^{7-B} \ll T^{1.01},
%\end{align*}
%where we have written $s=1+iu$.  
Combining \eqref{gammastirlingB}, \eqref{gammabound}, \eqref{coszbd},  \eqref{sizerestrictiondelta}, and the assumption $1 \ll c_1 T \le t \le c_2 T$, it follows that, when 
$\Re(s)=1$ one has 
\[
  \sum_{j=1}^{2}   \frac{\Gamma(p_j-s)}{\Gamma(a_1+b_1+p_j+s)} 
  \ll 
  \begin{cases}
  t^{2 \delta-2} e^{\pi |\Im(s)|} (1 + \frac{|s|^2}{t^{1-2 \delta}}) & \text{if } |\Im(s)| \le t+1, \\
  (| |\Im(s)|^2 -t^2)^{\delta-1}  e^{\pi t} & \text{otherwise.}
  \end{cases}
\]
It follows  that for all $s$ satisfying $\Re(s)=1$ and $t \ge c_1 T \gg 1$
\begin{equation}
   \label{gammafoneline}    \sum_{j=1}^{2}   \frac{\Gamma(p_j-s)}{\Gamma(a_1+b_1+p_j+s)} 
  \ll t^{2\delta-2} |s|^{2+2 \delta} e^{\pi |\Im(s)|}.
\end{equation}
Using \eqref{gammaoneline}, \eqref{gammafoneline}, and Lemma \ref{Stirling} (ii), one finds that the absolute value
of the $s$ integral in \eqref{I111b} is bounded above by:
\[
\int_{(1)} \frac{|G(s)|}{|s|}
\cdot \frac{t^3}{8 \pi^3 r^2} \Big(  1+ O \Big( \frac{|s|^2}{t^{1-3 \delta}} \Big) \Big)
\cdot O( t^{2 \delta-2} |s|^{\frac{7}{2}+4 \delta} ) \cdot |ds|
\ll r^{-2} t^{1+2\delta} \int_{(1)} |G(s)| |s|^{\frac{9}{2}+4 \delta}
|ds| \ll r^{-2} t^{1+2\delta}. 
\]
Applying this along with \eqref{ci1i2bd} and \eqref{ulrbound},  it follows  that  
removal of the condition of summation
$r \le R_0=T^{5}$ would add to the right-hand side of 
\eqref{I111b} a sum equal to
\begin{equation}
\begin{split}
  \label{tailR0}
  \sum_{r > R_0} \frac{{\bf c}_{1,1}}{r^{a_1+b_1}}
  \int_{c_1 T}^{c_2 T} O(r^{-2} t^{1+2\delta}) dt \cdot \sum_{\ell \ge 1} u_{\ell,r;1,1}
  & = \sum_{r > T^{5}} O ( (\log T)^4 r^{2 \delta-2} T^{2+2\delta} d_2(r)) \\
  & \ll (\log T)^4 T^{2+2\delta} \sum_{r > T^{5}} \frac{d_2(r)}{r^{2-2 \delta}} 
  < \zeta^2 (\tfrac{3}{2}-2 \delta) T^{2 +3 \delta- \frac{5}{2}}   \ll1 
\end{split}
\end{equation}
since $\delta < \frac{1}{12}$.
Since  \eqref{I111b} already includes an error term $O(1)$, it follows from \eqref{tailR0} that, on
the right-hand side of the equation  \eqref{I111b}, we may simply omit the condition $r \le R_0$
(this changes the equation, but it does not lose its validity).  Following this step, the variable of 
summation in  \eqref{I111b}  is free to range over all positive integer values.  Then, by absolute
convergence,
we may swap the order of summation and integration there, and so obtain 
\begin{align*}
 & I_{(1,1)}^{(1)} 
    =      \prod_{j_1 \ne 1} \zeta(1-a_1+a_{j_1})   \prod_{j_2 \ne 1} \zeta(1-b_1+b_{j_2})  
    \int_{-\infty}^{\infty} \omega(t) \\
    & 
    \int_{(1)} \frac{G(s)}{2 \pi i s} g_{\I,\J}(s,t)   H_{\I,\J; \{a_1 \}, \{ b_1 \}}(s)
   \pi^{-3s}  \Gamma(a_1+b_1+2s)
  \Bigg( 
   \frac{\Gamma(\tfrac{1}{2}-b_1-s+it)}{\Gamma(\tfrac{1}{2}+a_1+s+it)}+
  \frac{\Gamma(\tfrac{1}{2}-a_1-s-it)}{\Gamma(\tfrac{1}{2}+b_1+s-it)}
\Bigg)
          ds \, dt +O(1)
\end{align*}
where we recall $H_{\I,\J; \{a_1 \}, \{ b_1 \}}(s)$ is defined in \eqref{HIJa1b1}.

We know that $H_{\I,\J; \{a_1 \}, \{ b_1 \}}(s)$ has an analytic continuation to $\Re(s) \ge -\frac{1}{4}+2\delta$
with the exception of the poles \eqref{Hpoles}.
We now move this contour back to $\Re(s) =\varepsilon_1$ where we now set 
$ \varepsilon_1
 =2 \delta$.  Note that the poles $-\frac{a_{k_1}+b_{k_2}}{2}$ for $k_1 \ne 1$, $k_2 \ne 1$ lie in the region $\Re(s) \le \delta$.  
 Throughout the rest of this section we shall use this value of $\varepsilon_1$. 
 In moving the contour we  pass poles at $s =p_1 $ and $s=p_2$.  
Observe that the pole $\frac{1}{2}-\frac{a_{1}+b_{1}}{2}$ of $H_{\I,\J; \{ a_1 \}, \{ b_1 \}}(s)$  
is cancelled by a corresponding zero of  $ \frac{\Gamma(\tfrac{1}{2}-b_1-s+it)}{\Gamma(\tfrac{1}{2}+a_1+s+it)}+
  \frac{\Gamma(\tfrac{1}{2}-a_1-s-it)}{\Gamma(\tfrac{1}{2}+b_1+s-it)}$.
By Lemma \ref{Stirling} (ii) and Lemma \ref{Hbounds} (ii), we find that  for $j=1,2$  the residue
at $s=p_j$ is
\[
  -\frac{G(p_j)}{p_j} g_{\I,\J}(p_j,t)   H_{\I,\J; \{ a_1\},\{ b_1 \}}(p_j) \pi^{-3 p_j} 
  \ll |p_j|^{-1-B} 
  \cdot \Big( \frac{t}{2} \Big)^{3\Re(p_j)} \cdot \frac{|p_j|^{2}}{t^{1-3 \delta}} 
  \cdot T^{4 \delta} 
 \ll T^{\frac{3}{2}-B+10 \delta}  
\]
since  we have \eqref{sizerestrictiondelta},  $p_j \asymp T$,  $c_1T \le t \le c_2T$.
Thus the poles contribute $(\log T)^{4} T^{\frac{5}{2}-B+10 \delta}  \ll 1$  by choosing $B$ sufficiently large. 
It follows that 
\begin{equation}
\begin{split}
  \label{I11gammafactors}
 & I_{(1,1)}^{(1)} = \prod_{j_1 \ne 1} \zeta(1-a_1+a_{j_1})   \prod_{j_2 \ne 1} \zeta(1-b_1+b_{j_2})   
        \int_{-\infty}^{\infty} \omega(t) \\
     & \frac{1}{2 \pi i} \int_{(\varepsilon_1)} \frac{G(s)}{s} g_{\I,\J}(s,t)  H_{\I,\J; \{a_1 \}, \{ b_1 \}}(s)
     \pi^{-3s}  \Gamma(a_1+b_1+2s)
  \Bigg( 
   \frac{\Gamma(\tfrac{1}{2}-b_1-s+it)}{\Gamma(\tfrac{1}{2}+a_1+s+it)}+
  \frac{\Gamma(\tfrac{1}{2}-a_1-s-it)}{\Gamma(\tfrac{1}{2}+b_1+s-it)}
\Bigg)
          ds \, dt +O(1).
\end{split}
\end{equation}

We give a bound for the portion of the inner integral with $|\Im(s)| \ge t+1$.  
We see, using Lemma \ref{Stirling} (ii), \eqref{gammazbd}, and \eqref{gammabound}, that it is bounded by 
\begin{equation}
\begin{split}
  \label{innerintegral}
&  \int_{\substack{\Re(s) = \varepsilon_1 \\ |\Im(s)| \ge t+1}  }
 \frac{|G(s)|}{|s|} 
 \cdot O \Big(t^{3 \varepsilon_1} \cdot \frac{|s|^2}{t^{1-3 \delta}}  \Big)
 |H_{\I,\J; \{a_1 \}, \{ b_1 \}}(s)| \cdot |s|^{2 \varepsilon_1 + 2 \delta- \frac{1}{2}} e^{-\pi |\Im(s)|} 
 \cdot |s|^{2 \delta} e^{\pi t} |ds| \\
 & \ll t^{3 \varepsilon_1+3 \delta-1} \int_{(\varepsilon_1)} |G(s) H_{\I,\J; \{a_1 \}, \{ b_1 \}}(s)| \cdot 
 |s|^{\frac{1}{2} + 4 \delta +2 \varepsilon_1} |ds|.
\end{split}
\end{equation}
Further, by   Lemma \ref{Hbounds} (i) we find \eqref{innerintegral} is 
\[
  \ll t^{9 \delta-1} \int_{(2 \delta)} |G(s)| |s|^{1+8 \delta} |ds| \ll t^{9 \delta-1}, 
\]
since  $\varepsilon_1=2 \delta$.
It follows that the contribution to \eqref{I11gammafactors} arising from $|\Im(s)| \ge t+1$ is 
$\ll {\bf c}_{1,1} \int_{c_1T}^{c_2 T} |\omega(t)| t^{9 \delta-1} dt 
\ll (\log T)^4 T^{9 \delta} \ll T^{10 \delta}$.
By  Lemma \ref{Stirling} (ii)  and \eqref{gammastirlingB} it follows that for $|\Im(s)| \le t+1$
\begin{align*}
   &g_{\I,\J}(s,t)   \sum_{j=1}^{2}   \frac{\Gamma(p_j-s)}{\Gamma(a_1+b_1+p_j+s)} \\
   & =  \Big(2t^{-a_1-b_1-2s}  \cos 
  ( \tfrac{\pi }{2}(a_1+b_1+2s)) 
  +O \Big(  t^{-2\Re(s)}   e^{\pi |\Im(s)|}  (  \tfrac{1+|s|^2}{t^{1-4\delta}} )
  \Big) \Big) \Big(1+O \Big(
\tfrac{1+|s|^2}{t^{1-3 \delta}}  \Big)  \Big)  \Big( \frac{t}{2} \Big)^{3s} \\
& = \Big(2t^{-a_1-b_1-2s}  \cos 
  ( \tfrac{\pi }{2}(a_1+b_1+2s)) 
  +O \Big( t^{-2 \Re(s)} e^{\pi |\Im (s)|} 
  \Big(    \tfrac{1+|s|^2}{t^{1-5\delta}}
  +   \tfrac{(1+|s|^2)^2}{t^{2-7\delta}} \Big) \Big) \Big)  \Big( \frac{t}{2} \Big)^{3s} .
\end{align*}
By this, and the bound obtained just before, 
we find that 
\begin{equation}
\begin{split}
  \label{I111approx}
 & I_{(1,1)}^{(1)} = \prod_{j_1 \ne 1} \zeta(1-a_1+a_{j_1})   \prod_{j_2 \ne 1} \zeta(1-b_1+b_{j_2})  
 \int_{c_1T}^{c_2 T} \omega(t)  
     \int_{\varepsilon_1-i(t+1)}^{\varepsilon_1+i(t+1)}  H_{\I,\J; \{ a_1 \}, \{ b_1 \}}(s)   \frac{G(s)}{2 \pi i s}  \Big( \frac{t}{2 \pi} \Big)^{3s}  \Gamma(a_1+b_1+2s) \\
     &   \times \Big(
        2t^{-a_1-b_1-2s}  \cos 
  ( \tfrac{\pi }{2}(a_1+b_1+2s)) 
  + O \Big( t^{-2 \Re(s)} e^{\pi |\Im (s)|} 
  \Big(    \tfrac{1+|s|^2}{t^{1-5\delta}}
  +   \tfrac{(1+|s|^2)^2}{t^{2-7\delta}} \Big)\Big) \Big)
               ds \,  dt  + O(T^{10 \delta}).
\end{split}
\end{equation}
We now have to estimate the the contribution from the $O$ term in \eqref{I111approx}.
By Lemma \ref{Hbounds} (i) and \eqref{gammaoneline}  we find this term contributes
\begin{equation}
\begin{split}
  \label{contr1}
 O & (( \log T)^4) \int_{c_1 T}^{c_2 T} |\omega(t)| \int_{\e_1-(t+1)}^{\e_1+(t+1)} 
 O_{\varepsilon_1}( |s|^{\frac{1}{2}})  \cdot \frac{|G(s)|}{|s|} t^{\varepsilon_1}  \cdot 
 O_{\varepsilon_1} ( |s|^{2 \varepsilon_1+2 \delta-\frac{1}{2}})
 \cdot O \Big(\frac{1+|s|^2}{t^{1-5 \delta}}+ \frac{|s|^4}{t^{2-7 \delta}} \Big) |ds| dt \\
 & \ll (\log T)^4 \int_{c_1 T}^{c_2 T} |\omega(t)| t^{\varepsilon_1+5 \delta-1} \cdot O_{\varepsilon_1}(1) dt 
 \ll_{\varepsilon_1} (\log T)^4 T^{7\delta} \ll T^{8 \delta}
\end{split}
\end{equation}
where the last $O_{\varepsilon_1}(1)$ arises from 
\[
   \int_{(\varepsilon_1)} |G(s)| \cdot |s|^{2\varepsilon_1+2 \delta-1} (1+|s|)^4 |ds| 
   \ll \varepsilon_1^{-4}    
    \int_{(\varepsilon_1)} |G(s)|  |s|^{6 \delta+3} |ds| < \infty. 
\]
In the  part of the integral in \eqref{I111approx} corresponding to the term  $2t^{-a_1-b_1-2s}  \cos 
  ( \tfrac{\pi }{2}(a_1+b_1+2s)) $, we would like to extend the integration 
to all of  $\Re(s) = \varepsilon_1$. 
By \eqref{gammazbd} and \eqref{coszbd}  it follows that 
\begin{equation}
  \label{gammacos}
| \Gamma(a_1+b_1+2s) \cos 
  ( \tfrac{\pi }{2}(a_1+b_1+2s))| \ll_{\delta} |u|^{2\varepsilon_1+2 \delta-\frac{1}{2}}
\end{equation}
where $s=\varepsilon_1+iu$. 
By  Lemma \ref{Hbounds} (i)  and \eqref{gammacos}, the part of the integral with $|\Im(s)| \ge t+1$
is bounded by  
\begin{equation}
\begin{split}
  \label{contr2}
 & (\log T)^4 \int_{c_1T}^{c_2 T} |\omega(t)| 
 \Big( \int_{\e_1 +i (t+1)}^{\e_1 +i\infty}  
 +
  \int_{\e_1 -i\infty}^{\e_1 -i (t+1)}
  \Big)    O_{\varepsilon_1}( |s|^{\frac{1}{2}})   \cdot
  \frac{|G(s)|}{|s|} t^{\varepsilon_1+2 \delta}  \cdot 
 O_{\varepsilon_1} ( |s|^{2 \varepsilon_1+2 \delta-\frac{1}{2}}) |ds| dt \\
 & \ll  (\log T)^4  \int_{c_1T}^{c_2 T} |\omega(t)|  t^{4 \delta} \int_{\e_1+i(t+1)}^{\e_1+i\infty} |s|^{-3} |ds| 
 \ll 1,
\end{split}
\end{equation}
since $\delta < \frac{1}{12}$.
By the above comments we may extend the integration in \eqref{I111approx} to all of $\Re(s) = \varepsilon_1$,
and may also
remove the  terms corresponding to the big $O$ term in  \eqref{I111approx}.
By Proposition \ref{Hid} and  the functional equation
$
 \zeta(1-z)=2^{1-z} \pi^{-z}  \cos(\frac{\pi z}{2}) \Gamma(z) \zeta(z)$, one has
\begin{equation*}
\begin{split}
  2 & \cos( \tfrac{\pi}{2}(a_1+b_1+2s)) \Gamma(a_1+b_1+2s) H_{\I,\J; \{a_1 \}, \{ b_1 \}}(s) \\
  & = 2  \cos( \tfrac{\pi}{2}(a_1+b_1+2s)) \Gamma(a_1+b_1+2s)  
  \zeta(a_1+b_1+2s)\mathcal{C}_{\I,\J;a_1,b_1}(s)
  \prod_{\substack{k_1 \ne 1   \\ k_2 \ne 1}} \zeta(1+a_{k_1}+b_{k_2}+2s) \\
  & =  (2 \pi)^{a_1+b_1+2s} \zeta(1-a_1-b_1-2s) \mathcal{C}_{\I,\J;a_1,b_1}(s) 
   \prod_{\substack{k_1 \ne 1   \\ k_2 \ne 1}} \zeta(1+a_{k_1}+b_{k_2}+2s).
\end{split}
\end{equation*}
Inserting this in the extended form of the integral in \eqref{I111approx}  (the version of \eqref{I111approx} where the integration
is over $\Re(s) = \varepsilon_1$) while taking into account the bounds
\eqref{contr1} and \eqref{contr2} we find

\begin{align*}
 & I_{(1,1)}^{(1)}  = \prod_{j_1 \ne 1} \zeta(1-a_1+a_{j_1})    \prod_{j_2 \ne 1} \zeta(1-b_1+b_{j_2})  
 \int_{-\infty}^{\infty} \omega(t)  
     \frac{1}{2 \pi i} \int_{(\varepsilon_1)}    \zeta(1-a_1-b_1-2s)  \times \\
& \Big( \prod_{\substack{(k_1,k_2) \\ k_1 \ne 1, k_2 \ne 1}}  
    \zeta(1+a_{k_1}+b_{k_2}+2s) \Big)   \mathcal{C}_{\I,\J;a_1,b_1}(s)
      \frac{G(s)}{s}    \Big( \frac{t}{2 \pi} \Big)^{s-a_1-b_1}
          ds \, dt    + O(T^{10 \delta}).
\end{align*}
This simplifies to 
 \begin{align*}
     I_{(1,1)}^{(1)}  = \prod_{j_1 \ne 1}  \zeta(1-a_1+a_{j_1})  \prod_{j_2 \ne 1}
       \zeta(1-b_1+b_{j_2}) 
 \int_{-\infty}^{\infty} \omega(t) \Big( \frac{t}{2 \pi} \Big)^{-a_1-b_1}  
     \frac{1}{2 \pi i} \int_{(\varepsilon_1)}   \varphi(s)  \Big( \frac{t}{2 \pi} \Big)^{s} ds \, dt
     + O( T^{10 \delta} )
\end{align*}
where
\begin{equation*}
  \varphi(s) =  \Big(
     \prod_{\substack{(k_1,k_2) \\ k_1  \ne 1, k_2 \ne 1}} \zeta(1+a_{k_1}+b_{k_2}+2s)
      \Big)
      \zeta(1-a_1-b_1-2s)
      \mathcal{C}_{\I,\J;a_1,b_1}(s)
      \frac{G(s)}{s}.
 \end{equation*}
 
 We further evaluate $I_{(1,1)}^{(1)}$ by applying the residue theorem.  
 Note that 
 $\varphi(s)$  has poles at $s=0$, $s= -\frac{a_1+b_1}{2}$, and 
$-\frac{a_{k_1}+b_{k_2}}{2}$ for $k_1 \ne 1$ and $k_2 \ne 1$.
Observe that conditions \eqref{initialconditionA}, \eqref{initialconditionB},
and \eqref{initialconditionC} guarantee that these are distinct poles. 
Also note that the condition $\delta < \frac{1}{12}$ ensures that all of these poles are to the right 
of the line $\Re(s) = -\frac{1}{4}+2 \delta$. 
We move the $s$ contour left past $\Re(s)=0$, picking up residues 
at the various poles.   Let 
\begin{align*}
 \mathcal{R}_1(t) & =\text{Res}_{s=0} \Big(\varphi(s)  \Big( \frac{t}{2 \pi} \Big)^{s} \Big),  \\
 \mathcal{R}_2(t) & =\text{Res}_{s=-\frac{a_1+b_1}{2}} \Big( \varphi(s)  \Big( \frac{t}{2 \pi} \Big)^{s} \Big),  \\
  \mathcal{R}_3(t) & = \sum_{\substack{k_1 \ne 1 \\ k_2 \ne 1}} \text{Res}_{s=-\frac{a_{k_1}+b_{k_2}}{2}} \Big(\varphi(s) \Big( \frac{t}{2 \pi} \Big)^{s} \Big).
\end{align*}
By the residue theorem,
\[
      \frac{1}{2 \pi i} \int_{(\varepsilon_1)}   \varphi(s)  \Big( \frac{t}{2 \pi} \Big)^{s} ds = \mathcal{R}_1(t) + \mathcal{R}_2(t)+ \mathcal{R}_3(t)
      +   \frac{1}{2 \pi i} \int_{(-\frac{1}{4}+2 \delta)}   \varphi(s)   \Big( \frac{t}{2 \pi} \Big)^{s} ds.
\]
Observe that for $s=-\frac{1}{4}+2 \delta+iu$ the product of all the zeta factors in $\varphi(s)$ is of size 
$O_{\delta}\Big( \Big( (|u|+1)^{\frac{1}{4}-\delta} \log(2+|u|) \Big)^4 \Big) \ll_{\delta} |u|+1$ which follows from \eqref{zetabounds}, and that, by Proposition \ref{Hid} (ii), we have    
 $\mathcal{C}_{\I,\J;a_1,b_1}(s) \ll O(1)$ when $\Re(s)=-\frac{1}{2}+4 \delta$.  Therefore
\[
     \frac{1}{2 \pi i} \int_{(-\frac{1}{4}+2 \delta)}   \varphi(s)  \Big( \frac{t}{2 \pi} \Big)^{s} ds \ll 
     t^{-\frac{1}{4}+2 \delta} \int_{-\infty}^{\infty} (|u|+1) |-\tfrac{1}{4}+2 \delta+iu|^{-1} \min(1,|u|^{-A}) du \ll 
     T^{-\frac{1}{4}+2 \delta}
\]
and
\[
     \int_{-\infty}^{\infty} \omega(t) \Big( \frac{t}{2 \pi} \Big)^{-a_1-b_1}  
     \frac{1}{2 \pi i} \int_{(-\frac{1}{4}+2 \delta)}   \varphi(s)   \Big( \frac{t}{2 \pi} \Big)^{s} ds \, dt \ll 
     T^{1+2\delta - \frac{1}{4} + 2 \delta} = T^{\frac{3}{4} + 4 \delta}. 
\]
In addition, as all the poles are simple we have the following residues:
\begin{align*}
     \mathcal{R}_1(t)= \Big( \prod_{\substack{(k_1,k_2) \\ k_1 \ne 1, k_2 \ne 1}} \zeta(1+a_{k_1}+b_{k_2}) \Big)
      \zeta(1-a_1-b_1)
      \cC_{\I,\J;a_1,b_1}(0),
\end{align*}
\begin{align*}
    \mathcal{R}_2(t)=-\frac{1}{2}\Big( \prod_{\substack{ (k_1,k_2) \\ k_1 \ne 1, k_2 \ne 1}}
    \zeta(1+a_{k_1}+b_{k_2}-a_1-b_1) \Big)
         \cC_{\I,\J;a_1,b_1}( -\tfrac{a_1+b_1}{2})
      \frac{G( -\frac{a_1+b_1}{2})}{ (-\frac{a_1+b_1}{2})} 
       \Big( \frac{t}{2 \pi} \Big)^{ -\frac{a_1+b_1}{2}}, 
\end{align*}
and 
\begin{align*}
    \mathcal{R}_3(t) &  =   \frac{1}{2}\sum_{\substack{k_1 \ne 1 \\ k_2 \ne 1}}
 \prod_{{\begin{substack}{l_1 \ne 1, l_2 \ne 1
         \\ (l_1,l_2) \ne (k_1,k_2)}\end{substack}}} 
      \zeta(1+a_{l_1}+b_{l_2}-a_{k_1}-b_{k_2})
      \zeta(1-a_1-b_1+a_{k_1}+b_{k_2})
      \cC_{\I,\J; \{ a_1 \} , \{ b_1 \}}(-\tfrac{a_{k_1}+b_{k_2}}{2}) \\
      & \times
      \frac{G(-\frac{a_{k_1}+b_{k_2}}{2})}{(-\frac{a_{k_1}+b_{k_2}}{2})}
       \Big( \frac{t}{2 \pi} \Big)^{ 
      -\frac{a_{k_1}+b_{k_2}}{2}}.
\end{align*}
Observe that the coefficients $-\frac{1}{2}$ and $\frac{1}{2}$ in front of $\mathcal{R}_2(t)$ and 
$\mathcal{R}_3(t)$  appear since $\zeta(1+\eta \pm 2s) = \frac{ \pm \frac{1}{2}}{s \pm \frac{\eta}{2}}+O(1)$
for $|s| \le 1$ and $|\eta| \le 2 \delta$.  
Since $T^{10 \delta} \ll T^{\frac{3}{4}+\delta}$ (given that $\delta < \frac{1}{12}$), it follows that 
\begin{equation}
  \label{I11expansion}
 I_{1,1}^{(1)}  = \mathcal{S}_1+\mathcal{S}_2+\mathcal{S}_3 + O(
(\log T)^4 T^{\frac{3}{4} +  \delta}),
\end{equation}
where
\begin{equation}
\begin{split}
  \label{S1}
 \mathcal{S}_1 
= \int_{-\infty}^{\infty} \omega(t) \Big( \frac{t}{2 \pi} \Big)^{-a_1-b_1} dt 
\cdot  \cC_{\I,\J; \{ a_1 \} , \{ b_1 \} }(0)
\cdot \Bigg( \prod_{\substack{(a,b) \\ a \in \I_{\{a_1 \}}, b \in  \J_{ \{ b_1 \} } }}
\zeta(1+a+b)  \Bigg),
\end{split}
\end{equation}
\begin{equation}
\begin{split}
  \label{S2}
 \mathcal{S}_2 & =  -\frac{1}{2}\prod_{j_1 \ne 1}  \zeta(1-a_1+a_{j_1})  \prod_{j_2 \ne 1} \zeta(1-b_1+b_{j_2}) 
 \prod_{\substack{(k_1,k_2) \\ k_1 \ne 1, k_2 \ne 1}} \zeta(1+a_{k_1}+b_{k_2}-a_1-b_1) \times \\
  &    \cC_{\I,\J; \{ a_1 \}, \{b_1 \}}( -\tfrac{a_1+b_1}{2})
      \frac{G( -\frac{a_1+b_1}{2})}{ (-\frac{a_1+b_1}{2})}    
    \int_{-\infty}^{\infty} \omega(t) \Big( \frac{t}{2 \pi} \Big)^{-\frac{3(a_1+b_1)}{2}} 
    dt,
\end{split}
\end{equation}
\begin{equation}
\begin{split}
 \label{S3}
 & \mathcal{S}_3  =  \frac{1}{2}
  \prod_{j_1 \ne 1}\zeta(1-a_1+a_{j_1}) \prod_{j_2 \ne 1} \zeta(1-b_1+b_{j_2})  
   \sum_{ \substack{ (k_1,k_2) \\
 k_1 \ne 1, k_2 \ne 1}}
 \int_{-\infty}^{\infty} \omega(t) \Big( \frac{t}{2 \pi} \Big)^{-a_1-b_1 -\frac{a_{k_1}+b_{k_2}}{2}}  \times  \\
  &   \Big(
  \prod_{{\begin{substack}{l_1 \ne 1, l_2 \ne 1
         \\ (l_1,l_2) \ne (k_1,k_2)}\end{substack}}} 
      \zeta(1+a_{l_1}+b_{l_2}-a_{k_1}-b_{k_2}) \Big)
      \zeta(1-a_1-b_1+a_{k_1}+b_{k_2}) 
      \cC_{\I,\J; \{ a_1 \}, \{ b_1 \} }(-\tfrac{a_{k_1}+b_{k_2}}{2})
      \frac{G(-\frac{a_{k_1}+b_{k_2}}{2})}{(-\frac{a_{k_1}+b_{k_2}}{2})}
      dt.
\end{split}
\end{equation}

We now provide further simplification of $\mathcal{S}_1$ and $\mathcal{S}_2$. 
By Proposition \ref{ACidentities} (i) and Lemma \ref{ZIJ}, we have 
\[
     \cC_{\I,\J; \{ a_1 \} , \{ b_1 \} }(0)= \cA_{\I_{\{a_1 \}}, \J_{ \{ b_1 \} }}(0) 
     =    \mathcal{Z}_{\I_{\{a_1 \}}, \J_{ \{ b_1 \} }}(0) 
     \cdot \Bigg( \prod_{\substack{(a,b) \\ a \in \I_{\{a_1 \}}, b \in  \J_{ \{ b_1 \} } }}
\frac{1}{\zeta(1+a+b)}  \Bigg).
\]
By this and \eqref{S1}, we obtain
\begin{equation}
  \label{S1id}
  \mathcal{S}_1 = \int_{-\infty}^{\infty} \omega(t)   \Big( \frac{t}{2 \pi} \Big)^{-a_1-b_1}  
 dt \cdot  \mathcal{Z}_{\I_{\{a_1 \}}, \J_{ \{ b_1 \} }}(0).
\end{equation}
 Next, we show that 
\begin{equation}
  \label{S2id}
  \mathcal{S}_2 = -\mathrm{Res}_{s=  \frac{-a_1 -b_1}{2}} \mathcal{Z}_{\I,\J}(2s) \frac{G \Big( \frac{-a_1 -b_1}{2}  \Big)}{ (\frac{-a_1 -b_1}{2})}
  \int_{-\infty}^{\infty} \omega(t) 
  \Big( 
  \frac{t}{2 \pi}
  \Big)^{-\frac{3}{2}(a_1+b_1)}dt.
\end{equation}  
First observe that by Lemma 2.3 and the fact that $\zeta(s)$ has a simple pole at $s=1$ with residue 1, 
\[
   \mathrm{Res}_{s=  \frac{-a_1 -b_1}{2}} \mathcal{Z}_{\I,\J}(2s) 
   =\frac{1}{2} \Big( \prod_{\substack{(i,j) \\ (i,j) \ne (1,1)}} \zeta(1+a_i+b_j-a_1-b_1) \Big)
   \cA_{\I,\J}(-a_1-b_1).
\] 
Thus, in order that \eqref{S2id} may be deduced from \eqref{S2}, it suffices to prove that 
$\cA_{\I,\J}(-a_1-b_1) = $\\
$  \cC_{\I,\J; \{ a_1 \} , \{ b_1 \}}( -\tfrac{a_1+b_1}{2})$ which is Proposition \ref{ACidentities} (ii).
From \eqref{I11expansion}, \eqref{S1id}, \eqref{S2id}, and \eqref{S3} we arrive at
$I_{(i_1,i_2)}^{(1)} = J_{(i_1,i_2)}^{(1)} + O(T^{\frac{3}{4} + 2 \delta})$   in the case that $i_1=i_2=1$, 
assuming \eqref{sizerestrictiondelta} and $\delta \in (0,\frac{1}{12})$. 
The case of general $i_1, i_2$, follows from a simple permutation of variables. 
In summary, we have shown that 
\begin{equation*}
  \sum_{i_1=1}^{3} \sum_{i_2=1}^{3} I_{(i_1,i_2)}^{(1)} 
   =  \sum_{i_1=1}^{3} \sum_{i_2=1}^{3} J_{(i_1,i_2)}^{(1)}  + O(T^{\frac{3}{4}+2 \delta}), 
\end{equation*}
subject to \eqref{sizerestrictiondelta}  and $\delta \in (0,\frac{1}{12})$. 
Combining this with Proposition \ref{IO1prop} we obtain for any $\e'>0$
\[
  I_{O}^{(1)}  =  \sum_{i_1=1}^{3} \sum_{i_2=1}^{3} J_{(i_1,i_2)}^{(1)}  + 
   O\Big(  T^{\frac{3 \vartheta}{2}+\varepsilon'} \Big( \frac{T}{T_0} \Big)^{1+C} +
  T^{\frac{3}{4}+2 \delta}  \Big), 
\]
subject to the stronger condition $|a_i|, |b_i| \ll (\log T)^{-1}$. 
Now for $\e' >0$, we may assume that $\delta < \frac{\e'}{2}$. 
Furthermore, as  $\vartheta \ge \frac{1}{2}$, $C >0$, and  $T_0 \ll T$,  
it  follows that  the second error term is dominated by the first, 
and so we have established the proposition.
\end{proof}
Next, we shall derive 
Proposition \ref{I2i1i2} from  Proposition \ref{I1i1i2}.
\begin{proof}[Proof of Proposition \ref{I2i1i2}]
The expression $I_{O}^{(2)}$ as defined by \eqref{IO2} may viewed as  a special case of \eqref{IO1}, by 
the simultaneous substitutions 
\begin{equation}
  \label{substitutionA}
\I \to -\J,  \,   \J \to -\I, \ \text{ (or } a_j \to -b_j  \text{ and  }  b_j \to -a_j  \text{ for }  j=1,2,3)
\end{equation}
and
\begin{equation}
  \label{substitutionB} \omega(t) \to \omega_1(t) =  X_{\I,\J;t}  \omega(t).
\end{equation} 
By Lemma \ref{Stirling} we have the  asymptotic $X_{\I,\J;t} \sim  ( \frac{t}{2 \pi}  )^{-\sum_{k=1}^{3}(a_k+b_k)}$. 
This means that the formula in Proposition \ref{I2i1i2} can be obtained from Proposition \ref{I1i1i2} by inserting 
the factor $ (t/ 2 \pi)^{-\sum_{k=1}^{3}(a_k+b_k)}$ and permuting the variables as in \eqref{substitutionA}.
However, it must be verified that $\omega_1$ satisfies the conditions \eqref{cond1}, \eqref{cond2}, and \eqref{cond3}. 
The first two conditions are immediate.  In order to show \eqref{cond3} it suffices to show 
for all $j \in \mathbb{Z}_{\ge 0}$ and $t \in [c_1T, c_2T]$ that 
\begin{equation}
   \label{omega1partials}
   \frac{\partial^j}{\partial t^j} X_{\I,\J;t}  \ll_j T^{-j}.
\end{equation}
We obtain this bound as follows.  The function $z \to X_{\I,\J;z}$ is holomorphic on the closed disc $\{ z \in \C \ | \ 
|z-t| \le \frac{t}{2} \}$ and thus we have 
\[
  \Big| \frac{\partial^j}{\partial t^j} X_{\I,\J;t} \Big| 
  \le \frac{ j! }{ (t/2)^j } \max_{ \substack{z \in \C \\ |z-t|=\frac{t}{2}} } |  X_{\I,\J;z}|.
\]
It follows from Stirling's formula (see \cite[(C.18), p. 523]{MV}) that, when $t \in [c_1T,c_2T]$ and $|a_i|,|b_i| \ll \frac{1}{\log T}$ for $i=1,2,3$, one has 
$$\max_{ \substack{z \in \C \\ |z-t|=\frac{t}{2}} } |  X_{\I,\J;z}| \ll 1.
$$
The last two bounds combine to give \eqref{omega1partials}.  Thus, by the generalized product rule, it follows
that $\omega_1$ satisfies \eqref{cond3}.   Since we have shown that 
$I_{O}^{(1)} = \sum_{i_1=1}^{3} \sum_{i_2=1}^{3} J_{(i_1,i_2)}^{(1)} 
   + O(  T^{\frac{3 \vartheta}{2}+\varepsilon} ( \frac{T}{T_0} )^{1+C} 
 )
$, it suffices to determine how the substitutions \eqref{substitutionA} and \eqref{substitutionB} change $J_{(1,1)}^{(1)}$. 
Thus we have  $I_{O}^{(2)}= \sum_{i_1=1}^{3} \sum_{i_2=1}^{3} {\tilde J}_{(i_1,i_2)}
   + O ( T^{\frac{3}{4}+4 \delta})$ 
where ${\tilde{J}}_{(1,1)}$ is obtained from $J_{(1,1)}^{(1)}$ by the  the substitutions \eqref{substitutionA}. 
It follows from \eqref{J1i1i2id} that the first term in $J_{(1,1)}^{(1)}$ is 
$   \mathcal{Z}_{-\J_{\{-b_1 \}}, \I_{ \{ -a_1 \} }}(0)  \int_{-\infty}^{\infty} 
  \omega_1(t)
 (\tfrac{t}{2 \pi})^{b_1+a_1}dt$.
Note that $
   -\J_{\{-b_1\}}=\{a_1,-b_2,-b_3\} =\I_{\{a_2,a_3\}}$ and 
$
  -\I_{\{-a_1\}} = \{ b_1,-a_2,-a_3 \} =\J_{ \{ b_2,b_3 \}}$
 and thus by Lemma \ref{Stirling} (i) the first term in  ${\tilde{J}}_{(1,1)}$ equals 
 \begin{equation}
   \label{firsttermB}
  \int_{-\infty}^{\infty}  \omega(t)   \Big( \frac{t}{2 \pi} \Big)^{-a_2-a_3-b_2-b_3}  
  \mathcal{Z}_{\I_{\{a_2,a_3 \}}, \J_{ \{ b_2,b_3 \} }}(0)(1+O(t^{-1})) dt.
\end{equation}
By Lemma \ref{ZIJ}, \eqref{sizerestriction}, \eqref{initialconditionA}, and \eqref{initialconditionB},
the term $O(t^{-1})$ in \eqref{firsttermB} leads to an error of size
\[
    O \Big( \prod_{x \in \I_{\{a_2,a_3 \}} , y \in \J_{ \{ b_2,b_3 \}} } |x+y|^{-1}  \int_{c_1 T}^{c_2 T} t^{-1} dt 
    \Big)
    \ll (\log T)^9. 
\]
Similarly,  the second term of ${\tilde{J}}_{(1,1)}$ is
\begin{equation}
  \label{secondterm}
  -\frac{1}{2}\mathrm{Res}_{s=  \frac{a_1 +b_1}{2}} \mathcal{Z}_{-\J,-\I}(2s) \frac{G \Big( \frac{a_1 +b_1}{2}  \Big)}{ (\frac{a_1+b_1}{2})}
  \int_{-\infty}^{\infty} \omega(t)
  \Big( 
  \frac{t}{2 \pi}
  \Big)^{
  -\sum_{k=1}^{3}(a_k+b_k)
  +\frac{3(a_1+b_1)}{2}} (1+O(t^{-1}))dt 
\end{equation}
and by Lemma \ref{ZIJ}, \eqref{initialconditionA}, \eqref{initialconditionB}, and \eqref{initialconditionC} the big $O$ term leads to an error 
\[
 O \Bigg( \Big( \prod_{\substack{ x \in -\J, y \in  -\I \\ (x,y) \ne (-b_1,-a_1) }  } \frac{1}{|a_1+b_1+x+y|} 
 \Big) \frac{1}{|a_1+b_1|}    \int_{c_1 T}^{c_2 T} t^{-1} dt 
 \Bigg)
 \ll (\log T)^{9}. 
\]
The third term in ${\tilde{J}}_{(1,1)}$ is 
\begin{equation}
\begin{split}
  \label{thirdterm}
  &\frac{1}{2}
  \prod_{j_1 \ne 1}\zeta(1+b_1-b_{j_1}) \prod_{j_2 \ne 1} \zeta(1+a_1-a_{j_2})  
   \sum_{ \substack{ (k_1,k_2) \\
 k_1 \ne 1, k_2 \ne 1}}   \prod_{{\begin{substack}{l_1 \ne 1, l_2 \ne 1
         \\ (l_1,l_2) \ne (k_1,k_2)}\end{substack}}} 
      \zeta(1-b_{l_1}-a_{l_2}+b_{k_1}+a_{k_2}) \\
  & \times  
      \zeta(1+b_1+a_1-b_{k_1}-a_{k_2})  
     \cC_{-\J,-\I; \{-b_1\}, \{ -a_1 \}}(\tfrac{b_{k_1}+a_{k_2}}{2})
      \frac{G(\frac{b_{k_1}+a_{k_2}}{2})}{(\frac{b_{k_1}+a_{k_2}}{2})} \\
     & \times  \int_{-\infty}^{\infty} \omega(t) \Big( \frac{t}{2 \pi} \Big)^{-\sum_{k=1}^{3}(a_k+b_k)+a_1+b_1 +\frac{b_{k_1}+a_{k_2}}{2}}
 (1+O(t^{-1})) dt.
\end{split}
\end{equation}
By Proposition \ref{Hid} and a calculation similar to those above, the $O(t^{-1})$ term in \eqref{thirdterm}
leads to an error there that is of size $O((\log T)^9)$. Note that the main term in \eqref{thirdterm}
simplifies to
\begin{equation}
\begin{split}
  \label{thirdtermB}
  &+  \frac{1}{2}
  \prod_{j_1 \ne 1}\zeta(1+b_1-b_{j_1}) \prod_{j_2 \ne 1} \zeta(1+a_1-a_{j_2})  
   \sum_{ \substack{ (k_1,k_2) \\
 k_1 \ne 1, k_2 \ne 1}}
 \int_{-\infty}^{\infty} \omega(t) \Big( \frac{t}{2 \pi} \Big)^{-a_{r_2}-b_{r_1} -\frac{b_{k_1}+a_{k_2}}{2}} dt   \\
  & \times  \Big(
  \prod_{{\begin{substack}{l_1 \ne 1, l_2 \ne 1
         \\ (l_1,l_2) \ne (k_1,k_2)}\end{substack}}} 
      \zeta(1-b_{l_1}-a_{l_2}+b_{k_1}+a_{k_2}) \Big)
      \zeta(1+b_1+a_1-b_{k_1}-a_{k_2}) \\
   & \times   \cC_{-\J,-\I; \{-b_1\}, \{ -a_1 \}}(\tfrac{b_{k_1}+a_{k_2}}{2})
      \frac{G(\frac{b_{k_1}+a_{k_2}}{2})}{(\frac{b_{k_1}+a_{k_2}}{2})}
\end{split}
\end{equation}
where we recall that $r_1=r_1(1,k_1)$ and $r_2=r_2(1,k_2)$ are defined by \eqref{rdefinition}. 
By \eqref{firsttermB}, \eqref{secondterm}, \eqref{thirdterm}, and \eqref{thirdtermB}, we find that 
when $(i_1,i_2)=(1,1)$ one has $\tilde{J}_{(i_1,i_2)} = J_{(i_1,i_2)}^{(2)} + 
O((\log T)^9)$, where $J_{(i_1,i_2)}^{(2)}$ is defined by \eqref{I2i1i2id} and \eqref{U}.  
The general case, $(i_1,i_2) \in \{ 1, 2, 3 \}^2$, it follows by considering all permutations of the form
$a_i \to a_{\sigma(i)}$,
$b_i \to b_{\tau(i)} (i=1,2,3)$ in which $\sigma$ and $\tau$ are given elements of the subgroup of permutations of the
set $\{  1,2,3\}$ that is generated by the $3$-cycle $(2 \, 3 \, 1)$.  
\end{proof}
\begin{proof}[Proof of Proposition \ref{sumtozero}]
Recall that we are trying to prove that 
\begin{equation}
  \label{sumtozero2}
  \sum_{i_1=1}^{3} \sum_{i_2=1}^{3} \sum_{ \substack{ (k_1,k_2) \\
 k_1 \ne i_1, k_2 \ne i_2}} ( \mathcal{T}_{(i_1,i_2);(k_1,k_2)}+\mathcal{U}_{(i_1,i_2);(k_1,k_2)})=0.
\end{equation}
We aim to prove this by matching terms in the two triple sums.   First we show that 
\begin{equation}
  \label{specialcase}
\mathcal{T}_{(1,1);(2,2)} + \mathcal{U}_{(3,3);(2,2)} = 0.
\end{equation}
We begin with a few observations.    Note that 
\begin{equation*}
\begin{split}
  \label{T1122}
\mathcal{T}_{(1,1);(2,2)} & = \frac{1}{2}
   \zeta(1-a_1+a_{2})\zeta(1-a_1+a_{3})\zeta(1-b_1+b_{2}) \zeta(1-b_1+b_{3}) 
 \int_{-\infty}^{\infty} \omega(t) \Big( \frac{t}{2 \pi} \Big)^{-a_1-b_1 -\frac{a_{2}+b_{2}}{2}} dt   \\
  & \times 
      \zeta(1+a_{3}+b_{2}-a_{2}-b_{2})
         \zeta(1+a_{2}+b_{3}-a_{2}-b_{2})
            \zeta(1+a_{3}+b_{3}-a_{2}-b_{2})
      \zeta(1-a_1-b_1+a_{2}+b_{2}) \\
   & \times   \cC_{\I,\J; \{a_1 \}, \{ b_1 \} }(-\tfrac{a_{2}+b_{2}}{2})
      \frac{G(-\frac{a_{2}+b_{2}}{2})}{(-\frac{a_{2}+b_{2}}{2})} \\
&= - \frac{1}{2}
   \zeta(1-a_1+a_{2})\zeta(1-a_1+a_{3})\zeta(1-b_1+b_{2}) \zeta(1-b_1+b_{3}) 
 \int_{-\infty}^{\infty} \omega(t) \Big( \frac{t}{2 \pi} \Big)^{-a_1-b_1 -\frac{a_{2}+b_{2}}{2}} dt   \\
  & \times 
      \zeta(1+a_{3}-a_{2})
         \zeta(1+b_{3}-b_{2})
            \zeta(1+a_{3}+b_{3}-a_{2}-b_{2})
      \zeta(1-a_1-b_1+a_{2}+b_{2}) \\
   & \times   \cC_{\I,\J; \{a_1 \}, \{ b_1 \} }(-\tfrac{a_{2}+b_{2}}{2})
      \frac{G(\frac{a_{2}+b_{2}}{2})}{(\frac{a_{2}+b_{2}}{2})},
\end{split}
\end{equation*}
since $G$ is even.  
We now try to identify a term which will cancel with this.  
We shall look in the terms coming 
from the second half of the approximate functional equation.  We guess the correct term 
arises from  $I_{(3,3)}^{(2)}$ and is $ \mathcal{U}_{(3,3);(2,2)}$. 
Note that, by the definition \eqref{rdefinition}, we have $r(3,2)=1$, and so, by \eqref{U}, 
\begin{equation*}
\begin{split}
  \label{U3322}
 \mathcal{U}_{(3,3);(2,2)}   & = \frac{1}{2}
  \zeta(1+b_{3}-b_{1})\zeta(1+b_{3}-b_{2})
  \zeta(1+a_{3}-a_{1})   \zeta(1+a_{3}-a_{2})  
 \int_{-\infty}^{\infty} \omega(t) \Big( \frac{t}{2 \pi} \Big)^{-a_{1}+b_{1} -\frac{b_{2}+a_{2}}{2}} dt   \\
  & \times 
      \zeta(1-b_{1}-a_{1}+b_{2}+a_{2})  
      \zeta(1-b_{1}-a_{2}+b_{2}+a_{2}) 
       \zeta(1-b_{2}-a_{1}+b_{2}+a_{2}) 
      \zeta(1+b_{3}+a_{3}-b_{2}-a_{2}) \\
   & \times   \cC_{-\J,-\I; \{-b_{3}\}, \{ -a_{3} \}}(\tfrac{b_{2}+a_{2}}{2})
      \frac{G(\frac{b_{2}+a_{2}}{2})}{(\frac{b_{2}+a_{2}}{2})} \\
& = \frac{1}{2}
  \zeta(1+b_{3}-b_{1})\zeta(1+b_{3}-b_{2})
  \zeta(1+a_{3}-a_{1})   \zeta(1+a_{3}-a_{2})  
 \int_{-\infty}^{\infty} \omega(t) \Big( \frac{t}{2 \pi} \Big)^{-a_{1}+b_{1} -\frac{b_{2}+a_{2}}{2}} dt   \\
  & \times 
      \zeta(1-b_{1}-a_{1}+b_{2}+a_{2})  
      \zeta(1-b_{1}+b_{2}) 
       \zeta(1-a_{1}+a_{2}) 
      \zeta(1+b_{3}+a_{3}-b_{2}-a_{2}) \\
   & \times   \cC_{-\J,-\I; \{-b_{3}\}, \{ -a_{3} \}}(\tfrac{b_{2}+a_{2}}{2})
      \frac{G(\frac{b_{2}+a_{2}}{2})}{(\frac{b_{2}+a_{2}}{2})}.
\end{split}
\end{equation*}
Observe that the two expressions we are considering are negatives of each other and add to zero 
if 
\begin{equation*}
\cC_{\I,\J; \{a_1 \}, \{ b_1 \} }(-\tfrac{a_{2}+b_{2}}{2})=
     \cC_{-\J,-\I; \{-b_{3}\}, \{ -a_{3} \}}(\tfrac{b_{2}+a_{2}}{2}).
\end{equation*}
However, this identity is Proposition \eqref{ACidentities} (iii). 
Thus this establishes \eqref{specialcase}.  
More generally, we can show that for $(i_1,i_2) \in \{ 1, 2, 3 \}^2$ and $(k_1,k_2)  \in \{ 1, 2, 3 \}^2$ such that 
$k_1 \ne i_1$ and $k_2 \ne i_2$ that 
\begin{equation}
  \label{generalcaseB}
\mathcal{T}_{(i_1,i_2);(k_1,k_2)} + \mathcal{U}_{(r_2,r_1);(k_2,k_1)} = 0
\end{equation} 
where   $r_1= r(i_1,k_1)$ and $r_2= r(i_2,k_2)$ are defined by \eqref{rdefinition}. 
By a calculation similar to that seen in our proof of \eqref{specialcase}, we find that 
\eqref{generalcaseB} follows if one has
\[
   \cC_{\I,\J; \{a_{i_1} \}, \{ b_{i_1} \} }(-\tfrac{a_{k_1}+b_{k_2}}{2})=
     \cC_{-\J,-\I; \{-b_{r_2}\}, \{ -a_{r_1} \}}(\tfrac{b_{k_2}+a_{k_1}}{2}).
\]
 This is Proposition \ref{ACidentities} (iv).
Finally, summing \eqref{generalcase} over $i_1, i_2, k_1, k_2$ 
with $i_1 \ne k_1$, $i_2 \ne k_2$
leads to \eqref{sumtozero2}. 
\end{proof}

%\newpage %% AUTHOR: please comment out this line.  It serves only
%%%   to demonstrate both types of header line in daj-template.pdf

%\section{Expansion estimates}
%
% More of the body of your paper goes here~\cite{bergelson-johnson-moreira}.

%%% AUTHOR: optional appendix here
\appendix %% you may comment this out if no Appendix
\section*{Appendix}
\section{ Proof of Propositions \ref{Hid}, \ref{ACidentities}, Lemma \ref{Hbounds}} \label{appendix1}
We now establish Proposition \ref{Hid}. 
\begin{lem} \label{gmult}
Let $k \in \mathbb{N}$, $I=\{1, \ldots, k \}$, and
let $\X =\{ x_1, x_2, \ldots, x_k\}$  be distinct complex numbers. 
For $\Re(s) > -\min_{\ell=1, \ldots k} \Re(x_{\ell})$, $p$ prime, and $\alpha \ge 0$
\begin{equation}
  \label{gXspalpha}
  g_{\X}(s,p^{\alpha}) =  (1-p^{-s-x_1}) \cdots (1-p^{-s-x_k}) 
   \sum_{i=1}^{k}
  \frac{ p^{-x_i \alpha}}{1-p^{-x_i-s}}
  \prod_{\ell \in I \setminus \{ i \}} (1-p^{x_i-x_{\ell}})^{-1}.
\end{equation}
\end{lem}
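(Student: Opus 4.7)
The plan is to compute the numerator and denominator of
\[
g_{\X}(s,p^{\alpha}) = \frac{\sum_{j=0}^{\infty} \sigma_{\X}(p^{j+\alpha}) p^{-js}}{\sum_{j=0}^{\infty} \sigma_{\X}(p^{j}) p^{-js}}
\]
separately. By Definition \ref{zetaX}, the Dirichlet series $\zeta_{\X}(s) = \prod_{i=1}^{k} \zeta(s+x_i)$ has Euler product $\prod_{i=1}^{k} (1-p^{-s-x_i})^{-1}$ at the prime $p$, so the denominator equals $\prod_{i=1}^{k} (1-p^{-s-x_i})^{-1}$ immediately.

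The key step is to obtain a closed form for $\sigma_{\X}(p^{m})$. Setting $T=p^{-s}$, the generating function identity
\[
\sum_{m=0}^{\infty} \sigma_{\X}(p^{m}) T^{m} = \prod_{i=1}^{k} \frac{1}{1-p^{-x_i} T}
\]
together with the assumption that the $x_i$ are distinct allows a partial-fraction decomposition
\[
\prod_{i=1}^{k} \frac{1}{1-p^{-x_i} T} = \sum_{i=1}^{k} \frac{A_i}{1-p^{-x_i} T}, \quad A_i = \prod_{\ell \in I \setminus \{i\}} (1-p^{x_i - x_{\ell}})^{-1},
\]
where $A_i$ is found by multiplying through by $(1-p^{-x_i}T)$ and evaluating at $T=p^{x_i}$. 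Expanding each summand as a geometric series and reading off the coefficient of $T^{m}$ yields $\sigma_{\X}(p^{m}) = \sum_{i=1}^{k} A_i p^{-x_i m}$.

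Substituting this into the numerator and interchanging the finite and infinite sums gives
\[
\sum_{j=0}^{\infty} \frac{\sigma_{\X}(p^{j+\alpha})}{p^{js}} = \sum_{i=1}^{k} A_i p^{-x_i \alpha} \sum_{j=0}^{\infty} p^{-j(s+x_i)} = \sum_{i=1}^{k} \frac{A_i p^{-x_i \alpha}}{1-p^{-s-x_i}},
\]
and dividing by the Euler factor computed above produces exactly \eqref{gXspalpha}. There is no real obstacle here beyond the bookkeeping of the partial-fraction residues, which is clean because distinctness of the $x_i$ makes all poles simple; this same distinctness hypothesis will be what later allows clean manipulation of the factor $A_i$ when this lemma is plugged into the computation of $\mathcal{C}_{\I,\J;\{a_1\},\{b_1\}}(p;s)$.
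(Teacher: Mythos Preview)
Your proof is correct and follows essentially the same strategy as the paper: compute the denominator via the Euler factor, obtain $\sigma_{\X}(p^{m})=\sum_i A_i p^{-x_i m}$ by partial fractions, and then evaluate the numerator. The only minor difference is that the paper computes the numerator by writing $\sum_{j\ge 0}\sigma_{\X}(p^{j+\alpha})p^{-js}=p^{\alpha s}\bigl(\sum_{j\ge 0}-\sum_{j<\alpha}\bigr)\sigma_{\X}(p^j)p^{-js}$ and simplifying the difference, whereas you substitute the closed form for $\sigma_{\X}(p^{j+\alpha})$ directly and sum the resulting geometric series; your route is a touch more direct and also handles $\alpha=0$ without a separate check.
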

\begin{proof}  We begin by recalling that 
\begin{equation}
  \label{gXdefnB}
  g_{\X}(s,p^{\alpha}) = \frac{\sum_{j=0}^{\infty} \frac{\sigma_{\X}(p^{\alpha+j})}{p^{js}}}{ \sum_{j=0}^{\infty} \frac{\sigma_{\X}(p^j)}{p^{js}}}.
\end{equation}
We now find an expression for $\sigma_{\X}(p^{j})$ for $j \ge 0$.  By the multiplicativity of $\sigma_{X}$ and the Euler product, 
we have the identitity
\[
\zeta(s+x_1) \cdots \zeta(s+x_k) = \prod_{p} \sum_{j=0}^{\infty} \sigma_{\X}(p^j) p^{-js}
=\prod_{p}
(1-p^{-s-x_1})^{-1} \cdots (1-p^{-s-x_k})^{-1}.
\]
This is valid for $\Re(s) > 1- \min_{\ell =1, \ldots, k} \Re(x_{\ell})$, where
the Euler product is absolutely convergent. 
It follows that
\begin{equation}
  \label{local1}
 \sum_{j=0}^{\infty} \sigma_{\X}(p^j) p^{-js}=(1-p^{-s-x_1})^{-1} \cdots (1-p^{-s-x_k})^{-1} \ne 0
\end{equation}
for $\Re(s) > 1- \min_{\ell =1, \ldots, k} \Re(x_{\ell})$. In \eqref{local1} both the sum and product
are holomorphic on the half-plane  $\Re(s) > - \min_{\ell =1, \ldots, k} \Re(x_{\ell})$, and the product
is non-zero there.  We can deduce that \eqref{local1} must hold for all $s$ lying in that half-plane:
in particular, the denominator of the right hand side of \eqref{gXdefnB} is non-zero for such values of $s$.
By partial fractions and the geometric series, 
\begin{equation}
\begin{split}
  \label{parfrac}
 (1-p^{-s-x_1})^{-1} \cdots (1-p^{-s-x_k})^{-1}
 & = \sum_{i=1}^{k}  (1-p^{-s-x_i})^{-1} \prod_{\ell \in I \setminus \{ i \}} (1-p^{x_i-x_{\ell}})^{-1} \\
 & =  \sum_{i=1}^{k} \prod_{\ell \in I \setminus \{ i \}} (1-p^{x_i-x_l})^{-1}
 \sum_{j=0}^{\infty}  p^{-x_i j} p^{-js}.
\end{split}
\end{equation}
Hence for $j \ge 0$
\[
  \sigma_{\X}(p^j) = \sum_{i=1}^{k} p^{-x_i j} \prod_{\ell \in I \setminus \{ i \}} (1-p^{x_i-x_{\ell}})^{-1}
\]
and for $\alpha \ge 1$
\[
  \sum_{j=0}^{\alpha-1}  \sigma_{\X}(p^j) p^{-js}
  = \sum_{i=1}^{k}
  \frac{1- (p^{-x_i-s})^{\alpha}}{1-p^{-x_i-s}}
  \prod_{\ell \in I \setminus \{ i \}} (1-p^{x_i-x_{\ell}})^{-1}. 
\]
Thus 
\begin{equation}
\begin{split}
  \label{local2}
  & \sum_{j = 0}^{\infty} \sigma_{\X}(p^{j+\alpha}) p^{-js}
%&   = p^{\alpha s} \sum_{j \ge \alpha} \sigma_{\X}(p^j) p^{-js} \\
  =p^{\alpha s} \Big( \sum_{j \ge 0} \sigma_{\X}(p^j) p^{-js}
  - \sum_{j=0}^{\alpha-1} \sigma_{\X}(p^j) p^{-js} \Big)  \\
  &  =p^{\alpha s} 
  \sum_{i=1}^{k}
  \frac{ (p^{-x_i-s})^{\alpha}}{1-p^{-x_i-s}}
  \prod_{\ell \in I \setminus \{ i \}} (1-p^{x_i-x_{\ell}})^{-1} 
   =
  \sum_{i=1}^{k}
  \frac{ p^{-x_i \alpha}}{1-p^{-x_i-s}}
  \prod_{\ell \in I \setminus \{ i \}} (1-p^{x_i-x_{\ell}})^{-1}.
\end{split}
\end{equation}
By \eqref{gXdefnB}, \eqref{local1}, and \eqref{local2} we obtain \eqref{gXspalpha} for $\alpha \ge 1$.
In the case that $\alpha=0$, we observe that the left hand side equals 1 since $g_X(s,n)$ is multiplicative
and the right hand side also equals 1 by \eqref{parfrac}.
\end{proof}

\begin{lem} \label{Gmult}
Let $k \in \mathbb{N}$, $I=\{1, \ldots, k \}$, and
$\X =\{ x_1, x_2, \ldots, x_k\}$ be distinct complex numbers. 
For $\Re(s) > -\min_{\ell=1, \ldots k} \Re(x_{\ell})$,
$p$ prime, and $j \ge 1$
\begin{equation}
 \label{GXspj}
  G_{\X}(s,p^j) = 
(1-p^{-s-x_1}) \cdots (1-p^{-s-x_k}) \frac{1}{p-1}
   \sum_{i=1}^{k}
  \frac{ p^{1-x_i j} - p^{s-x_i (j-1)}}{1-p^{-x_i-s}}
  \prod_{\ell \in I \setminus \{ i \}} (1-p^{x_i-x_{\ell}})^{-1}.
\end{equation}
\end{lem}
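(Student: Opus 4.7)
The plan is to substitute directly into the definition \eqref{GXdefn} and exploit the fact that $\mu(d)$ vanishes unless $d$ is squarefree. For $n = p^j$ with $j \ge 1$, only $d = 1$ and $d = p$ contribute. When $d = 1$ the inner $e$-sum collapses to $g_{\X}(s, p^j)$, while for $d = p$ the sum over $e \mid p$ has the two terms $e = 1$ and $e = p$, which produce $g_{\X}(s, p^{j-1})$ and $p^{-s} g_{\X}(s, p^j)$ respectively. Collecting everything I expect to obtain
\[
 G_{\X}(s,p^j) = \frac{p}{p-1}\, g_{\X}(s,p^j) \;-\; \frac{p^s}{p-1}\, g_{\X}(s,p^{j-1}).
\]

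Next I would invoke Lemma \ref{gmult} to expand each of $g_{\X}(s, p^j)$ and $g_{\X}(s, p^{j-1})$. Writing $C = \prod_{m=1}^k (1 - p^{-s-x_m})$ and $A_i = (1 - p^{-x_i-s})^{-1} \prod_{\ell \in I \setminus \{i\}} (1-p^{x_i - x_\ell})^{-1}$, the formula of Lemma \ref{gmult} says $g_{\X}(s, p^\alpha) = C \sum_i p^{-x_i \alpha} A_i$. Substituting and factoring out $\frac{C}{p-1}$ gives
\[
G_{\X}(s, p^j) = \frac{C}{p-1} \sum_{i=1}^k A_i \Bigl( p \cdot p^{-x_i j} - p^s \cdot p^{-x_i(j-1)} \Bigr),
\]
which is exactly \eqref{GXspj}. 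The key identity $\frac{p}{p-1} = 1 + \frac{1}{p-1}$, used when combining the $d=1$ and $d=p$ contributions, is what produces the factor $\tfrac{1}{p-1}$ in the answer.

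There is no serious obstacle here: the result is essentially a one-line manipulation once one notes that only squarefree $d$ matter. The only thing to be careful about is the bookkeeping of signs and the $\phi(p) = p-1$ denominator, and the fact that we need $j \ge 1$ so that $p^{j-1}$ is a nonnegative power of $p$ (which is where Lemma \ref{gmult} applies, since it covers $\alpha \ge 0$). I would record the two intermediate contributions cleanly before combining them, so that the algebra leading to $\frac{p \cdot p^{-x_i j} - p^{s-x_i(j-1)}}{1-p^{-x_i-s}}$ in each summand is transparent.
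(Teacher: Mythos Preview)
Your proposal is correct and follows essentially the same route as the paper: reduce the defining sum \eqref{GXdefn} to the two squarefree divisors $d=1$ and $d=p$, arrive at $G_{\X}(s,p^j)=\tfrac{p}{p-1}g_{\X}(s,p^j)-\tfrac{p^s}{p-1}g_{\X}(s,p^{j-1})$, and then insert the formula of Lemma~\ref{gmult}. The only cosmetic slip is that the $e=p$ term in the inner sum carries a factor $\mu(p)/p^s=-p^{-s}$, not $+p^{-s}$, but your final combined expression is correct so this is just a wording issue.
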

\begin{proof}  
By definition \eqref{GXdefn} it follows that 
\begin{align*}
  G_{\X}(s,p^j) & = \sum_{d \mid p^j} \frac{\mu(d) d^s}{\phi(d)}
  \sum_{e \mid d} \frac{\mu(e)}{e^s} g_{\X} \Big(s, \frac{p^j e}{d}\Big) 
  = g_{\X}(s,p^j)-\frac{p^s}{p-1} g_{\X}(s,p^{j-1})+ \frac{1}{\phi(p)} g_{\X}(s,p^j) \\
  & = \frac{p}{p-1} g_{\X}(s,p^j)- \frac{p^s}{p-1}g_{\X}(s,p^{j-1}).
 \end{align*}
 Inserting \eqref{gXspalpha} in the last expression with $\alpha =j $ and $\alpha=j-1$ yields
 \eqref{GXspj}. 
\end{proof}
Observe that we may apply the preceding result in the special case $\X=\I$ and $s=1-a_1$.  
\begin{lem} \label{GmultI3}
Let $\delta \in (0,\frac{1}{2})$ and $\I = \{a_1,a_2,a_3 \}$ where $a_1,a_2,a_3$, are distinct complex numbers
with $|a_i| \le \delta$ for $i=1,2,3$.
 Let $p$ be a prime and $j \ge 1$.
  Then
\begin{equation}
\label{GI3}
  G_{\I}(1-a_1,p^j) = p^{-a_2 j} \frac{1-p^{-1+a_1-a_3}}{1-p^{a_2-a_3}}
  + p^{-a_3 j} \frac{1-p^{-1+a_1-a_2}}{1-p^{a_3-a_2}}
\end{equation}
and, in  particular, 
\[
  G_{\I}(1-a_1,p) = p^{-a_2}+p^{-a_3}-p^{-1+a_1-a_2-a_3}.
\]
\end{lem}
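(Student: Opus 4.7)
The plan is to specialize Lemma \ref{Gmult} to the case $k=3$, $\X = \I = \{a_1,a_2,a_3\}$, and $s = 1-a_1$, and then extract cancellations. Upon this substitution, the prefactor becomes
\[
(1-p^{-1})(1-p^{-1+a_1-a_2})(1-p^{-1+a_1-a_3})\,\frac{1}{p-1} \;=\; \frac{1}{p}(1-p^{-1+a_1-a_2})(1-p^{-1+a_1-a_3}),
\]
so the awkward factor $\frac{1}{p-1}$ is absorbed cleanly.

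The first key simplification I will exploit is that the $i=1$ term in the sum vanishes identically, since its numerator is $p^{1-a_1 j}-p^{1-a_1-a_1(j-1)}=0$. Thus only $i=2$ and $i=3$ contribute. For the $i=2$ summand, the numerator factors as $p\,p^{-a_2 j}(1-p^{a_2-a_1})$, and the factor $(1-p^{a_2-a_1})$ cancels against the $\ell=1$ term in the product $\prod_{\ell\ne 2}(1-p^{a_2-a_\ell})^{-1}$; similarly, the denominator $(1-p^{a_1-a_2-1})$ cancels against the same factor in the prefactor. After these cancellations, the $i=2$ contribution collapses to $p^{-a_2 j}\,\frac{1-p^{-1+a_1-a_3}}{1-p^{a_2-a_3}}$. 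The $i=3$ term is handled identically by the symmetry $a_2\leftrightarrow a_3$, and adding the two pieces yields \eqref{GI3}.

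For the ``in particular'' clause I need to simplify the expression at $j=1$. I would introduce the shorthands $A=p^{a_2}$, $B=p^{a_3}$, $C=p^{a_1-1}$ and rewrite
\[
G_\I(1-a_1,p)=\frac{B-C}{A(B-A)}+\frac{A-C}{B(A-B)}=\frac{B(B-C)-A(A-C)}{AB(B-A)}=\frac{(B-A)(A+B-C)}{AB(B-A)}=\frac{A+B-C}{AB},
\]
where the factor $B-A$ cancels. Translating back gives $p^{-a_2}+p^{-a_3}-p^{-1+a_1-a_2-a_3}$, as claimed.

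I do not expect any genuine obstacle; this is a pure algebraic computation. The only thing to be careful about is the cancellation of $(1-p^{a_2-a_1})$ against the prefactor $(1-p^{a_1-a_2-1})$, which is really an identity $1-p^{a_2-a_1} = -p^{a_2-a_1}(1-p^{a_1-a_2})$ that contributes a compensating $-p^{a_2-a_1}$; tracking these signs and powers carefully is the only place where a slip is plausible, and the verification for $j=1$ then serves as an internal consistency check on the general formula.
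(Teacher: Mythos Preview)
Your proof is correct and follows essentially the same route as the paper: specialize Lemma~\ref{Gmult} at $s=1-a_1$, observe that the $i=1$ summand vanishes, and then cancel $(1-p^{a_2-a_1})$ against the corresponding factor in $\prod_{\ell\ne 2}$ and $(1-p^{a_1-a_2-1})$ against the prefactor (and symmetrically for $i=3$). Your explicit derivation of the $j=1$ case via the substitution $A=p^{a_2}$, $B=p^{a_3}$, $C=p^{a_1-1}$ is a nice touch the paper omits; note only that your closing caveat conflates two separate cancellations---$(1-p^{a_2-a_1})$ does not cancel against the prefactor but against the $\ell=1$ factor of the product, exactly as you correctly stated earlier in the body.
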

\begin{proof}
Note that for $\X= \I=\{ a_1, a_2, a_3 \}$ and $s=1-a_1$ the condition
$\Re(s) >-\min_{\ell=1, \ldots 3} \Re(a_{\ell})$ is met since $\delta < \frac{1}{2}$.
By Lemma \ref{Gmult} it follows that 
\begin{align*}
   G_{\I}(1-a_1,p^j)
   & = (1-p^{-1})(1-p^{-1+a_1-a_2})(1-p^{-1+a_1-a_3})
   \frac{1}{p-1}  \\
   & \times
    \sum_{i=1}^{3}
  \frac{ p^{1-a_i j} - p^{(1-a_1)-a_i (j-1)}}{1-p^{-a_i-(1-a_1)}}
  \prod_{\ell \in I \setminus \{ i \}} (1-p^{a_i-a_{\ell}})^{-1}.
\end{align*}
Note that $(1-p^{-1})(p-1)^{-1}=p^{-1}$, and that for $i=1$ the exponents 
$1-a_1-a_i(j-1)$ and $1-a_i j$ are equal. 
%Note that if $i=1$, then $p^{1-a_1 j} - p^{(1-a_1)-a_1 (j-1)}=0$ and $(1-p^{-1})(p-1)^{-1} =p^{-1}$.
Therefore 
\begin{align*}
 & G_{\I}(1-a_1,p^j)
   =(1-p^{-1+a_1-a_2})(1-p^{-1+a_1-a_3}) \\
   & \times
   \Big(
   \frac{p^{-a_2j}(1-p^{a_2-a_1})}{(1-p^{-1+a_1-a_2})(1-p^{a_2-a_1})(1-p^{a_2-a_3})}
   +   \frac{p^{-a_3j}(1-p^{a_3-a_1})}{(1-p^{-1+a_1-a_3})(1-p^{a_3-a_1})(1-p^{a_3-a_2})}
   \Big).
\end{align*}
Simplifying this yields \eqref{GI3}.
\end{proof}
\begin{lem}  \label{GIbd}
Let $0< \delta < \frac{1}{2}$ and $\I = \{ a_1, a_2, a_3 \}$, where $a_1,a_2,a_3 \in C$ are distinct
and satisfy 
$|a_i| \le \delta$ for $i=1,2,3$. Then , for $\ell \in \mathbb{N}$, we have  $|G_{\I}(1-a_1,\ell)| \le \ell^{\delta} d_2(\ell^2)$. 
\end{lem}
\begin{proof}
By multiplicativity, it suffices to prove the result for $\ell=p^j$ with $j \ge 1$.  
By \eqref{GI3} we have 
\begin{equation*}
\begin{split}
  G_{\I}(1-a_1,p^j) & = \frac{p^{-(j+1)a_2}-p^{-(j+1)a_3}}{p^{-a_2}-p^{-a_3} }
  - \Big( 
  \frac{ p^{-ja_2}-p^{-ja_3} }{p^{-a_2}-p^{-a_3} } 
  \Big) p^{-1+a_1-a_2-a_3}  \\
  & =  \sum_{\ell=0}^{j} p^{-(j-\ell) a_2} p^{-\ell a_3}
  -p^{-1+a_1-a_2-a_3} \sum_{\ell=0}^{j-1} p^{-(j-1-\ell)a_2} p^{-\ell a_3}. 
\end{split}
\end{equation*}
Since $|a_i| \le \delta$ for $i=1,2,3$, it follows that 
\begin{equation*}
\begin{split}
   |G_{\I}(1-a_1,p^j)| & \le 
    \sum_{\ell=0}^{j} p^{(j-\ell) \delta} p^{\ell \delta}
  +p^{-1+3 \delta} \sum_{\ell=0}^{j-1} p^{(j-1-\ell) \delta} p^{\ell \delta}
  = (j+1) p^{j \delta} + j p^{j \delta +2 \delta-1}  \le p^{j \delta} d_2(p^{2j})
\end{split}
\end{equation*} 
as long as $\delta < \frac{1}{2}$.  
\end{proof}

With Lemma \ref{GmultI3} in hand we can proceed with the proof of Proposition \ref{Hid}. 

\begin{proof}[Proof  of Proposition \ref{Hid}]  (i) Throughout the proof of this proposition we let $\sigma =\Re(s)$. 
Observe that by the bound $|c_{\ell}(r)|  \le (\ell,r)$ and Lemma \ref{GIbd}, we can check that the series 
for $H_{\I,\J;\{ a_1 \} , \{ b_1 \} }(s)$
is absolutely convergent 
for $\Re(s) > \frac{1}{2} +2 \delta$ and $\delta < \frac{1}{4}$. Furthermore, 
since $c_{\ell}(r) = \sum_{d \mid (\ell,r)} d \mu(\tfrac{\ell}{d})$ we have
\begin{align*}
 H_{\I,\J;\{ a_1 \} , \{ b_1 \} }(s) & =\sum_{r=1}^{\infty}
      \sum_{\ell=1}^{\infty} \frac{G_{\I}(1-a_1,\ell)G_{\J}(1-b_1,\ell)  }{\ell^{2-a_1-b_1} r^{a_1+b_1+2s}}
      \sum_{d \mid (l,r)} d \mu(\tfrac{\ell}{d}) 
        =  \sum_{\ell=1}^{\infty} \alpha_{\ell} \sum_{r=1}^{\infty} \frac{1}{r^c} 
   \sum_{d \mid l, d \mid r} d \mu(\tfrac{\ell}{d}) 
\end{align*} 
where 
$\alpha_{\ell} =  \frac{G_{\I}(1-a_1,\ell)G_{\J}(1-b_1,\ell)  }{\ell^{2-a_1-b_1}}$ and $c=a_1+b_1+2s$.
Thus 
\begin{align*}
    H_{\I,\J;\{ a_1 \} , \{ b_1 \} }(s) & =\sum_{\ell=1}^{\infty} \alpha_{\ell} \sum_{d \mid \ell} d 
    \mu(\tfrac{\ell}{d}) \sum_{r \ge 1, d \mid r} \frac{1}{r^c} 
  = \sum_{\ell=1}^{\infty} \alpha_{\ell} 
  \sum_{d \mid l}  \frac{d \mu(\tfrac{\ell}{d})}{d^c} \zeta(c) \\
  & = \zeta(c) \sum_{\ell=1}^{\infty} \alpha_{\ell} \sum_{d \mid \ell} d^{1-c} \mu(\tfrac{\ell}{d}) 
   = \zeta(c) \sum_{\ell=1}^{\infty} \alpha_{\ell}  \ell^{1-c} \sum_{d \mid \ell} \frac{\mu(d)}{d^{1-c}}.
 \end{align*}
 For $p$ prime and $j \ge 1$ we have $
    \sum_{d \mid p^j} \frac{\mu(d)}{d^{1-c}}
    = 1- \frac{1}{p^{1-c}}$.  By multiplicativity
 \begin{equation}
 \begin{split}
  \label{ellsum} 
  \sum_{\ell=1}^{\infty} \alpha_{\ell}  \ell^{1-c} \sum_{d \mid \ell} \frac{\mu(d)}{d^{1-c}}
  & = \prod_{p} \Big(
  1 + \sum_{j=1}^{\infty} 
  \frac{G_{\I}(1-a_1,p^j)G_{\J}(1-b_1,p^j)}{(p^j)^{2-a_1-b_1}}
  (p^j)^{1-a_1-b_1-2s} ( 1- p^{a_1+b_1+2s-1})
  \Big)  \\
&   = \prod_{p} \Big( 
  1 + \sum_{j=1}^{\infty} G_{\I}(1-a_1,p^j)G_{\J}(1-b_1,p^j)
        \cdot \frac{  1-p^{a_1+b_1+2s-1} }{ (p^j)^{1+2s}}
  \Big).
\end{split}
\end{equation}

We aim to simplify the expression with the brackets. 
At this point it will be convenient to introduce some notation.  Let 
\begin{equation}
  \label{variables}
U=p^{-1},  \ W = p^{-1-2s}, \ X_i=p^{-a_i},  \ Y_i = p^{-b_i},  \text{ for } i=1,2,3. 
\end{equation}
Observe that we have the bounds 
\begin{align}
  \label{variableboundsa}
  |W| & \le p^{-1-2 \sigma}, \\
  \label{variableboundsb}
   p^{-\delta} \le  |X_i|, |Y_i| & \le p^{\delta} \text{ for }  i=1,2,3.
\end{align}
Also set
\begin{equation}
  \label{Tjfj}
  T_j  = G_{\I}(1-a_1,p^j)G_{\J}(1-b_1,p^j)  \text{ and }  f_j  = \frac{  1-p^{a_1+b_1+2s-1} }{ (p^j)^{1+2s}}
  \text{ for } j \in \mathbb{Z}_{\ge 0}. 
\end{equation}
Observe that by Lemma  \ref{GmultI3} 
\begin{equation}
\begin{split}
  \label{T1}
  T_1 =    G_{\I}(1-a_1,p)G_{\J}(1-b_1,p)
  =  (X_2 + X_3- U X_2 X_3 X_{1}^{-1})(Y_2 + Y_3- U Y_2 Y_3 Y_{1}^{-1}) 
\end{split}
\end{equation}
and from \eqref{variables} and  \eqref{Tjfj} we have 
\begin{equation}
  \label{fjformula}
  f_j =   W^j - X_{1}^{-1} Y_{1}^{-1} U^2 W^{j-1} \text{ for } j \in \mathbb{N}. 
\end{equation}
With these observations in hand, we see that the first two terms within the brackets in \eqref{ellsum} 
sum to
\begin{equation}
\begin{split}
  \label{expansion}
 1+T_1 f_1& = 1 + (X_2 + X_3- U X_2 X_3 X_{1}^{-1}) ( Y_2 + Y_3- U Y_2 Y_3 Y_{1}^{-1}) W (1- X_{1}^{-1} Y_{1}^{-1}  U^2W^{-1})  \\
 & = 1 + (X_2 Y_2 + X_2 Y_3 + Y_2 X_3 + X_3 Y_3)W + 
 b_{20} U^2 +b_{30}U^3 +b_{40} U^4 +b_{11} UW +b_{21} U^2 W
\end{split}
\end{equation}
where the $b_{uv}$ is a polynomial in $X_{1}^{-1}, X_2, X_3, Y_{1}^{-1},Y_2, Y_3$
that is homogeneous of degree $2(u+v)$, so that, by \eqref{variableboundsb}, one has
\begin{equation*}
  \label{buvbounds}
  |b_{uv}| \ll p^{2(u+v) \delta}  
\end{equation*}
for all $u,v$. 
It follows that 
\begin{equation}
\begin{split}
   \label{errbound}
   & b_{20} U^2 +b_{30}U^3 +b_{40} U^4 +b_{11} UW +b_{21} U^2 W 
    \ll \sum_{j=2}^{4}  p^{2 j\delta} p^{-j} + p^{-1-2 \sigma} (p^{4 \delta} p^{-1} + p^{6 \delta} p^{-2}) \\
    & \ll p^{4 \delta -2} ( 1 + p^{2 \delta -1} + p^{4 \delta-2})
    + p^{4 \delta-2-2 \sigma} ( 1+ p^{2 \delta-1}) 
    \ll p^{4 \delta} (p^{-2-2\sigma}+p^{-2}), 
\end{split}
\end{equation}
since $\delta < \frac{1}{2}$.   Combining \eqref{expansion} and \eqref{errbound} it follows that the 
sum of the first two terms within the brackets in \eqref{ellsum} equal
\begin{equation*}
\begin{split}
  \label{first2terms}
 & 1 +  p^{-1-a_2-b_2-2s}+p^{-1-a_2-b_3-2s} + p^{-1-a_3-b_2-2s}+p^{-1-a_3-b_3-2s} 
 +O((p^{-2-2\sigma}+p^{-2})p^{4 \delta}).
\end{split}
\end{equation*}
This leads us to write the sum over $\ell$ in \eqref{ellsum}   as
\begin{align*}
  \zeta(1+a_2+b_2+2s) \zeta(1+a_2+b_3+2s) \zeta(1+a_3+b_2+2s) \zeta(1+a_3+b_3+2s)  
 \mathcal{C}_{\I,\J; \{ a_1 \} , \{ b_1 \}}(s), 
\end{align*} 
valid for $\Re(s) \ge 2\delta$, 
where
\begin{equation*}
    \mathcal{C}_{\I,\J; \{ a_1 \} , \{ b_1 \}}(s)
  = \prod_{p} \mathcal{C}_{\I,\J; \{ a_1 \} , \{ b_1 \}}(p;s) 
\end{equation*}
and 
\begin{equation}
\begin{split}
  \label{CIJa1b1ps}
   \mathcal{C}_{\I,\J; \{ a_1 \} , \{ b_1 \}}(p;s) 
&  =   \Big(
 1 + \sum_{j=1}^{\infty} \frac{G_{\I}(1-a_1,p^j)G_{\J}(1-b_1,p^j)}{(p^j)^{1+2s}}
    (1-p^{a_1+b_1+2s-1})
\Big) \times \\
&   \Big(1- \frac{1}{p^{1+a_2+b_2+2s}} \Big) 
  \Big(1- \frac{1}{p^{1+a_2+b_3+2s}}\Big)
\Big(1- \frac{1}{p^{1+a_3+b_2+2s}}\Big)
\Big(1- \frac{1}{p^{1+a_3+b_3+2s}}\Big).
\end{split}
\end{equation}
Hence
\begin{align*}
   & H_{\I,\J;\{ a_1 \} , \{ b_1 \} }(s) = 
      \Bigg( \prod_{k=2}^{3} \prod_{\ell=2}^{3} 
     \zeta(1+a_k+b_{\ell}+2s) \Bigg)  \zeta(a_1+b_1+2s)  \mathcal{C}_{\I,\J; \{ a_1 \} , \{ b_1 \}}(s).
\end{align*}
The next step is to derive an explicit formula for $\mathcal{C}_{\I,\J; \{ a_1 \} , \{ b_1 \}}(s)$, namely \eqref{lps}. 
By \eqref{GI3} 
It follows that 
\begin{align*}
\frac{G_{\I}(1-a_1,p^j) G_{\J}(1-b_1,p^j)}{(p^j)^{1+2s}}  
 & =
p^{-(1+a_2+b_2+2s) j} \frac{1-p^{-1+a_1-a_3}}{1-p^{a_2-a_3}} \frac{1-p^{-1+b_1-b_3}}{1-p^{b_2-b_3}}  \\
& +  p^{-(1+a_2+b_3+2s) j} \frac{1-p^{-1+a_1-a_3}}{1-p^{a_2-a_3}}\frac{1-p^{-1+b_1-b_2}}{1-p^{b_3-b_2}}  \\
& +  p^{-(1+a_3+b_2+2s) j} \frac{1-p^{-1+a_1-a_2}}{1-p^{a_3-a_2}}\frac{1-p^{-1+b_1-b_3}}{1-p^{b_2-b_3}} \\
& +  p^{-(1+a_3+b_3+2s) j} \frac{1-p^{-1+a_1-a_2}}{1-p^{a_3-a_2}} \frac{1-p^{-1+b_1-b_2}}{1-p^{b_3-b_2}}.   
\end{align*}
Since $
  \sum_{j=1}^{\infty} p^{-j \kappa} = \frac{p^{-\kappa}}{1-p^{-\kappa}}$, 
it follows from this and \eqref{CIJa1b1ps}  that
\begin{equation*}
\begin{split}
&  \mathcal{C}_{\I,\J; \{ a_1 \}, \{ b_1 \}}(p;s)
  =  \\
  &  \Bigg( 1 + \Big(
\frac{p^{-(1+a_2+b_2+2s)}}{1-p^{-(1+a_2+b_2+2s)}}
 \frac{1-p^{-1+a_1-a_3}}{1-p^{a_2-a_3}} \frac{1-p^{-1+b_1-b_3}}{1-p^{b_2-b_3}}  \\
& +  \frac{p^{-(1+a_2+b_3+2s) }}{1-p^{-(1+a_2+b_3+2s) }}
 \frac{1-p^{-1+a_1-a_3}}{1-p^{a_2-a_3}}\frac{1-p^{-1+b_1-b_2}}{1-p^{b_3-b_2}}  \\
& +  \frac{p^{-(1+a_3+b_2+2s) }}{1-p^{-(1+a_3+b_2+2s) }}
 \frac{1-p^{-1+a_1-a_2}}{1-p^{a_3-a_2}}\frac{1-p^{-1+b_1-b_3}}{1-p^{b_2-b_3}} \\
& + \frac{ p^{-(1+a_3+b_3+2s) }}{1-p^{-(1+a_3+b_3+2s) }}
 \frac{1-p^{-1+a_1-a_2}}{1-p^{a_3-a_2}} \frac{1-p^{-1+b_1-b_2}}{1-p^{b_3-b_2}}
\Big)   (1-p^{a_1+b_1+2s-1}) \Bigg) \\
& \times
  \Big(1- \frac{1}{p^{1+a_2+b_2+2s}} \Big)
   \Big(1- \frac{1}{p^{1+a_2+b_3+2s}} \Big)
 \Big(1- \frac{1}{p^{1+a_3+b_2+2s}} \Big)
 \Big(1- \frac{1}{p^{1+a_3+b_3+2s}} \Big)
\end{split}
\end{equation*}
and thus $
  \cC_{\I,\J; \{ a_1 \} , \{ b_1 \}}(s) = \prod_{p}  \mathcal{C}_{\I,\J; \{ a_1 \}, \{ b_1 \}}(p;s)$
where $ \mathcal{C}_{\I,\J; \{ a_1 \}, \{ b_1 \}}(p;s)$ is defined by \eqref{lps} and \eqref{Q}.  \\
(ii) We now establish \eqref{CIJexpansion}.
It is convenient to set
\begin{equation}
   \label{Pi}
    \Pi = (1 - X_2 Y_2 W)(1-X_2 Y_3 W)(1-X_3 Y_2 W)(1-X_3 Y_3 W).
\end{equation}
From \eqref{CIJa1b1ps} it follows that 
\begin{equation}
  \label{CIJTjfjPi}
  \mathcal{C}_{\I,\J; \{ a_1 \} , \{ b_1 \}}(p;s) = \Big( 1+\sum_{j=1}^{\infty} T_j f_j \Big) \Pi. 
\end{equation}
Observe that by Lemma \ref{GmultI3} we have 
\begin{equation}
\begin{split}
  \label{T2}
 T_2  =  & \Big( (X_2^2+X_2 X_3 + X_3^2)-( X_2^2 X_3 X_1^{-1}+X_3^2 X_2 X_1^{-1}  )U  \Big) 
    \cdot \Big( (Y_2^2+Y_2 Y_3 + Y_3^2)-( Y_2^2 Y_3 Y_1^{-1}+Y_3^2 Y_2 Y_1^{-1}  )U  \Big).
\end{split}
\end{equation}
Expanding out  $T_1$ \eqref{T1}, $T_2$ \eqref{T2}, $\Pi$ \eqref{Pi}, and
$f_1$ and $f_2$ \eqref{fjformula}, we find with the help of Maple  \footnote{ Maple file  available upon request.} that 
\begin{equation*}
\begin{split}
  (1+ T_1 f_1 +T_2 f_2) \Pi =  \sum_{(u,w) \in S} c_{uw}  U^{u} W^w  
\end{split}
\end{equation*}
where  $S \subset \mathbb{Z}_{\ge 0} \times \mathbb{Z}_{\ge 0}$ is finite, and $c_{uv}$ is a
polynomial in $X_{1}^{-1}, X_2, X_3, Y_1^{-1}, Y_2, Y_3$ that is homogeneous
of degree $2(u+w)$,
so that, by \eqref{variableboundsb}, one has
\begin{equation}
  \label{cuvbounds}
  |c_{uw}| \ll  p^{2(u+w)\delta},
\end{equation}
for all $(u,w) \in S$. 
 It may be checked that 
\begin{align}
  \label{cuvvalues}
 &  c_{00} = 1, \ c_{10}=c_{01} = 0, \  c_{02}  = -X_2X_3 Y_2 Y_3 = -p^{-a_2-a_3 -b_2-b_3}, \text{ and } \\
 \label{cuvvalues2}
  & c_{uw}=0 \text{ if either } u \ge 5 \text{ or } w \ge 7. 
\end{align}
Thus we have 
\begin{equation}
\begin{split}
  \label{jle2bounds}
    & (1+ T_1 f_1 +T_2 f_2) \Pi  =1 + c_{02}W^2 + \sum_{\substack{u \ge 2 \\ (u,0) \in S}} c_{u0} U^u + 
    \sum_{\substack{u \ge 1 \\ (u,0) \in S}} c_{u1} U^u W + \sum_{\substack{(u,w) \in S \\  w \ge 2, (u,w) \ne (0,2) }} c_{uw} U^u W^w.
\end{split}
\end{equation}
Observe that by \eqref{variables}, \eqref{variableboundsa}, and \eqref{cuvbounds} one has
\begin{equation}
   \label{uwterms}
    c_{uw} U^u W^w \ll p^{2(u+w) \delta-u-(1+2\sigma)w}
    = p^{-(1-2 \delta) u - (1+2 \sigma-2 \delta)w}
    \text{ for } u,w \ge 0. 
\end{equation}
Given that $S$ is finite, that $\sigma > -\frac{1}{2}+\delta$, and that we certainly have $\delta < \frac{1}{2}$, it follows from \eqref{cuvvalues}, \eqref{cuvvalues2}, \eqref{jle2bounds}, \eqref{uwterms},
and \eqref{variables} that
\begin{equation}
\begin{split}
   \label{firstparteq}
    (1+ T_1 f_1 +T_2 f_2) \Pi  & =1 + c_{02}W^2 + O \Big(   p^{-2(1-2 \delta)}   +    
    p^{-(1-2 \delta)-(1+2 \sigma-2\delta)}  
    +p^{-3(1+2\sigma-2 \delta)}  \Big)  \\
    & =1+c_{02} W^2 + O \Big( p^{6 \delta+ \max(-2,-2-2\sigma,-3-6 \sigma)} \Big) \\
     & =1- p^{-a_2-a_3 -b_2-b_3-2-4s} + O \Big( p^{6 \delta +\vartheta(\sigma)} \Big) 
\end{split}
\end{equation}    
where $\vartheta(\sigma)$ is defined by \eqref{thetasigma}.  Finally, we bound the contribution from $j \ge 3$ to 
\eqref{CIJTjfjPi}.  We make use of $|T_j| \le (p^{j \delta} d_2(p^{2j}))^2 =(2j+1)^2 p^{2 j \delta}
$, \eqref{fjformula}, and $|\Pi| \le (1+p^{-(1+2 \sigma-2 \delta)}  )^4 < 2^4  $  (since $\sigma > -\frac{1}{2}+\delta$)  to obtain 
\begin{equation}
\begin{split}
  \label{jg3bounds}
 &  \Big( \sum_{j=3}^{\infty}  T_j f_j  \Big) \Pi   \ll 
  \sum_{j=3}^{\infty}  (j+1)^2 p^{2j \delta} \Big(  \Big( \frac{1}{p^{1+2 \sigma}} \Big)^{j} +   \frac{p^{2 \delta}}{p^2} 
   \Big( \frac{1}{p^{1+2 \sigma}} \Big)^{j-1} \Big)  \\
   & \ll  p^{6 \delta } \Big(  \Big( \frac{1}{p^{1+2 \sigma}} \Big)^{3} +   \frac{p^{2 \delta}}{p^2} 
   \Big( \frac{1}{p^{1+2 \sigma}} \Big)^{2} \Big)  
       \le  p^{8 \delta} \Big(   \frac{1}{p^{3+6 \sigma}}   +  
    \frac{1}{p^{4+4 \sigma}} \Big)  
     \ll p^{8 \delta} p^{-\vartheta(\sigma)}
\end{split}
\end{equation}
for $\sigma \ge -\frac{1}{2}+\delta +\e$. 
Combining \eqref{CIJTjfjPi}, \eqref{firstparteq}, and \eqref{jg3bounds} we establish 
\eqref{CIJexpansion} along with \eqref{thetasigma}.  Note that from \eqref{thetasigma}, it follows that 
if $\sigma=\Re(s) = -\frac{1}{4}+\delta+\e$ with $\e >0$, then $ \mathcal{C}_{\I,\J; \{ a_1 \} , \{ b_1 \}}(p;s) 
= 1 + O(p^{-1-4 \e})$. Therefore $\mathcal{C}_{\I,\J; \{ a_1 \} , \{ b_1 \}}(s) $ is holomorphic and 
absolutely convergent for $\Re(s) > - \frac{1}{4} + \delta$.  Furthermore, we see that the poles listed 
in \eqref{Hpoles} in arise from the zeta factors in \eqref{Hfactorization}.
\end{proof}

\begin{proof}[Proof of Proposition \ref{ACidentities}]
\noindent (i)   In this proof we set $ x_{i}=p^{-a_i}$, $y_i=p^{-b_i}$ for $1 \le i \le 3$, and $u=p^{-1}$. 
We aim to show $
\cA_{\I_{\{a_1 \}}, \J_{ \{ b_1 \} }}(0) =  
  \cC_{\I,\J; \{ a_1 \} , \{ b_1 \} }(0)$. 
 Indeed, this will follow from 
 \begin{equation}
  \label{ApCplocal}
   \cA_{p;\I_{\{ a_1 \}}, \J_{\{ b_1 \}}}(0) =   \mathcal{C}_{p;\I,\J; \{ a_1 \} , \{ b_1 \}}(0)
\end{equation}
for $p$ prime. 
Note that $\I_{\{a_1 \}} = ( -b_1,a_2,a_3)$ and $\J_{ \{ b_1 \} } = ( -a_1,b_2,b_3)$.
We observe that
 $\cA_{\I_{\{a_1 \}}, \J_{ \{ b_1 \} }}(0)$ is obtained from
 $\cA_{\I,\J}(0)$ by the transformation
 $a_1 \to -b_1 \text{ and } b_1 \to -a_1. 
 $ 
 Therefore by \eqref{AIJ} and \eqref{ApIJ}
 \begin{align*}
   \cA_{\I_{\{a_1 \}}, \J_{ \{ b_1 \} }}(0) 
& = \prod_{p}
   P(p^{b_1},p^{-a_2},p^{-a_3},p^{a_1},p^{-b_2},p^{-b_3},p^{-1}) 
 = \prod_{p} P(y_{1}^{-1},x_2,x_3,x_{1}^{-1},y_2,y_3,u).
\end{align*}
On the other hand, by \eqref{CIJa1b1sdefn} and \eqref{lps}
\begin{align*}
    \mathcal{C}_{\I,\J; \{ a_1 \} , \{ b_1 \}}(0)
&  = \prod_{p} Q(p^{-a_2},p^{-a_3},p^{-b_2},p^{-b_3}; p^{-a_1},p^{-b_1};p^{-1},1) 
 = \prod_{p} Q(x_2,x_3,y_2,y_3;x_1,y_1;u,1).
\end{align*}
Therefore we see that \eqref{ApCplocal} is true 
 if $
  P(y_1^{-1},x_2,x_3,x_{1}^{-1},y_2,y_3,u) = Q(x_2,x_3,y_2,y_3;x_1,y_1;u,1)$ is true.
From the definitions \eqref{P} and \eqref{Q} this identity is equivalent to an identity of the form
$P_1(X_1,\ldots, X_7)=Q_1(X_1, \ldots, X_7)$ in which $P_1$ and $Q_1$ are certain polynomials
(the degrees and coefficients are computable).  Its verification is therefore a matter of 
routine (but lengthy) algebraic calculations: we find, by  a Maple calculation\footnote{Maple file available upon request.
  }, that it is indeed a valid identity. 
\\
(ii)  We will derive this from \eqref{ApCplocal}. 
An inspection of \eqref{P} and \eqref{Q} reveals that one has both 
\begin{equation*}
  P(X_1,X_2,X_3, Y_1, Y_2,Y_3; \Lambda^2 U) =
  P(\Lambda X_1, \Lambda X_2, \Lambda X_3, \Lambda Y_1, \Lambda Y_2, \Lambda Y_3; U)
\end{equation*}
and 
\begin{equation*}
   Q(X_2,X_3,Y_2,Y_3;X_1, Y_1; U, \Lambda^2 V) = 
   Q(\Lambda X_2, \Lambda X_3, \Lambda Y_2, \Lambda Y_3; \Lambda X_1, \Lambda Y_1; U,V),
\end{equation*}
whenever $\Lambda \ne 0$. We shall take $\Lambda=p^{\eta}$ where $p$ is an
arbitrary prime and $\eta$ some complex number (to be specified where needed), one deduces
via \eqref{ApIJ} and \eqref{lps}  that 
\begin{equation}
  \label{Aideta}
  \mathcal{A}_{p;\I,\J}(-2 \eta) = \mathcal{A}_{p;\I+\{-\eta\},\J+ \{ -\eta \}}(0),
\end{equation}
\begin{equation}
    \label{Cideta}
     \mathcal{C}_{\I,\J; \{a_1 \}, \{ b_1 \}}(p;- \tfrac{a_1+b_1}{2}) = 
      \mathcal{C}_{\I',\J'; \{a_1-\eta \}, \{ b_1-\eta \}}(p;0)
\end{equation}
(where for sets $U,V$, $U+V = \{ u+v \ : \ u \in U, v \in V \}$).  To deduce part (ii) of the 
Proposition, we set $\eta= \frac{a_1+b_1}{2}$ in \eqref{Aideta} and \eqref{Cideta}.
This gives us
\[
    \mathcal{A}_{p;\I,\J}(-a_1-b_1) = \mathcal{A}_{p;\I',\J'}(0)
    \text{ and } 
      \mathcal{C}_{\I,\J; \{a_1 \}, \{ b_1 \}}(p;-\eta) = 
      \mathcal{C}_{\I+\{-\eta\},\J+\{-\eta\}; \{a_1-\eta \}, \{ b_1-\eta \}}(p;0)
\]
where $\I'= \I + \{ -\eta \}$, $\J' = \J + \{ -\eta \}$, $a_1'=a_1-\eta= \frac{a_1-b_1}{2}$, $b_1'=b_1-\eta=\frac{b_1-a_1}{2}$. By these identities, along with \eqref{AIJ} and \eqref{CIJa1b1sdefn},
part (ii) requires that we establish $\mathcal{A}_{p;\I',\J'}(0)=
\mathcal{C}_{\I',\J'; \{ a_1' \}, \{ b_1' \}}(p;0)$.  Since $a_1'=-b_1'$, it follows that 
$( \I'_{ \{ a_1' \}}, \J'_{ \{ b_1' \}})=(\I', \J')$ and thus we must show 
 $\mathcal{A}_{p; \I'_{ \{ a_1' \}}, \J'_{ \{ b_1' \}}}(0)=
\mathcal{C}_{\I',\J'; \{ a_1' \}, \{ b_1' \}}(p;0)$. However, this is precisely
\eqref{ApCplocal}. \\
\noindent (iii)  For part (iii) we now set $\eta = \frac{a_2+b_2}{2}$.  We then have by \eqref{Cideta}
and \eqref{ApCplocal}
\begin{equation*}
  \mathcal{C}_{\I,\J; \{a_1 \}, \{ b_1 \}}(p;-\eta)= \mathcal{C}_{\I'',\J''; \{a_1'' \}, \{ b_1'' \}}(p;0)
  = \mathcal{A}_{p; \I''_{ \{ a_1'' \}}, \J''_{ \{ b_1'' \}}}(0),
\end{equation*}
where $\I'' = \I+ \{ - \eta \}$, $\J''= \J+ \{ - \eta \}$, $a_1''= a_1 - \eta$, and $b_1''=b_1-\eta$;
\begin{equation*}
   \mathcal{C}_{-\J,-\I; \{ - b_3 \}, \{ -a_3 \}}(p;\eta)
   = \mathcal{C}_{\I^{*}, \J^{*}; \{ a_3^{*} \}, \{ b_3^{*} \}}(p;0) =
   \mathcal{A}_{p; \I^{*}_{\{ a_3^{*} \}}, \J^{*}_{\{ b_3^{*} \}}}(0),
\end{equation*}
where $\I^{*} = (-\J)+ \{ \eta \}$, $\J^{*} = (-\J)+ \{ \eta \}$, $a_3^{*}=-b_3 + \eta$
and $b_3^{*}= - a_3 +\eta$.   By the last two identities, and \eqref{CIJa1b1sdefn}, we see 
that part (iii) follows if $(\I''_{ \{ a_1'' \}}, \J''_{ \{ b_1'' \}})= ( \I^{*}_{\{ a_3^{*} \}}, \J^{*}_{\{ b_3^{*} \}})$. 
However, observe that 
\begin{equation*}
\begin{split}
  (\I''_{ \{ a_1'' \}}, \J''_{ \{ b_1'' \}})& = \Big( \{-(b_1-\eta),a_2-\eta,a_3 -\eta \} ,  \{ -(a_1-\eta),b_2-\eta,
  b_3-\eta \} \Big), \\
  ( \I^{*}_{\{ a_3^{*} \}}, \J^{*}_{\{ b_3^{*} \}}) & = 
  \Big( \{ -b_1+\eta,-b_2+\eta,- (-a_3+\eta)\}, \{ -a_1+\eta,-a_2+\eta,- (-b_3+\eta) \} \Big).
\end{split}
\end{equation*}
Since $\eta = \frac{a_2+b_2}{2}$ we have both $a_2-\eta =-b_2+\eta$ and $b_2-\eta=-a_2+\eta$
and thus these two expressions are equal and the proof is complete. \\
(iv)  This follows from part (iii) by a permutation of variables. 
\end{proof}

\begin{proof}[Proof of Lemma \ref{Hbounds}]
(i)
By Proposition \ref{Hid}, we have for $\Re(s)=2 \delta$
\begin{equation*}
\begin{split}
  H_{\I,\J; \{ a_1\}, \{ b_1 \}}(s) &  \ll |\zeta(a_1+b_1+4 \delta +2iu)| \prod_{\substack{k_1 \ne 1 \\ k_2 \ne 1}} 
  |\zeta(1+a_{k_1}+b_{k_2}+4 \delta+2iu)|  \\
  & \ll  |\zeta( a_1+b_1+ 4 \delta +2iu)| \zeta^4(1+2 \delta)
\end{split}
\end{equation*}
for all real $u$.  However, it follows from \cite[Theorem 1.9]{Iv} that for $s=x+iy$ with $0 \le x \le 0.99$ 
and $y \in \mathbb{R}$
\[
  |\zeta(x+iy)| \ll (1+|y|)^{\frac{1-x}{2}} \log(2+|y|).
\]
Since $2 \delta \le \Re(a_1+b_1)+4 \delta \le 6\delta \le 0.99$ for $\delta \in (0, \tfrac{1}{7})$
and $2u -2 \delta \le 2u+\Im(a_1+b_1) \le 2u+2 \delta$ , it follows that 
\begin{equation*}
\begin{split}
 H_{\I,\J; \{ a_1\}, \{ b_1 \}}(s)   \ll  \delta^{-4} |s|^{\frac{1-2\delta}{2}} \log(1+|s|) \ll |s|^{\frac{1}{2}}. 
\end{split}
\end{equation*}
(ii) Since $\Re(s) \ge \frac{1}{2}-2 \delta$, it follows that $\Re(1+a_{k_1}+b_{k_2}+2s) \ge 2-6 \delta \ge 1.1$ as $\delta \in(0,\frac{1}{7})$ and thus 
$\prod_{\substack{k_1 \ne 1 \\ k_2 \ne 1}} \zeta(1+a_{k_1}+b_{k_2}+2s) \ll 1$. 
In addition, we have $\Im(s) \asymp T$ and thus by \eqref{zetabounds}
\[
  |\zeta(a_1+b_1+2s)| \ll T^{\frac{ 1-(1-6 \delta) }{2}} \log T \ll  T^{4 \delta}.
\]
Finally $ \mathcal{C}_{\I,\J; \{ a_1 \}, \{ b_1 \}}(s)  \ll 1$ in this region, and thus we 
establish the result.
\end{proof}

\section{Proofs of technical lemmas} \label{appendix2}

In this section we present the proofs of Parts (ii) and (iii) of Lemma \ref{Stirling}, 
and the proof of Lemma \ref{fpartials}.  The first of these proofs makes extensive use of 
Stirling's formula. 
\subsection{Proof of Lemma \ref{Stirling}(ii)}
\begin{proof}
Throughout this proof we assume that $|a_i|, |b_i| \le \delta$ for $i=1,2,3$ with $\delta \in (0,\frac{1}{6})$.  
This argument follows closely \cite[pp.390-391]{HB}. 
Let $\log z$ denote the principal branch of the logarithm, so that $-\pi < \Im(\log z) < \pi$ for 
$z \in \C \backslash (-\infty,0]$.  For each fixed $\eta >0$ we have 
\begin{equation}
  \label{Stirlingexpansion}
  \log \Gamma(z) =(z-\tfrac{1}{2}) \log z - z +\frac{1}{2} \log(2 \pi) + O(|z|^{-1})
\end{equation}
in the sector $|\arg z | \le \pi -\eta$. 
Throughout this argument we shall assume that
\begin{equation*}
  \label{alphabetaa}
 \alpha = \frac{1}{2} \Big(\frac{1}{2}+a+it \Big), \ 
 \beta=\frac{s}{2}, \text{ and }
 |a| \le \delta.
\end{equation*}
 We begin by supposing that $|\Im(s)| \le t^{\frac{1}{2}}$. 
Note that \eqref{Stirlingexpansion} implies
\begin{align*}
  \log \Gamma(\alpha+\beta) -\log \Gamma(\alpha) 
  =\beta \log(\alpha) + (\alpha+\beta-\tfrac{1}{2}) \log(1+\beta/\alpha)-\beta +O(t^{-1}). 
\end{align*}
Also
\[ 
   (\alpha+\beta-\tfrac{1}{2}) \log(1+\beta/\alpha)-\beta
   = (\alpha+\beta-\tfrac{1}{2})  \Big( \frac{\beta}{\alpha} + O \Big(  \Big(\frac{|\beta|}{|\alpha|}\Big)^2 \Big) \Big) - \beta
   = -\frac{\beta}{2 \alpha} +O \Big( \frac{|\beta|^2}{|\alpha|} \Big)
\]
and thus $\log \Gamma(\alpha+\beta) -\log \Gamma(\alpha) 
  = \beta \log(\alpha)+O ( \frac{|s|^2+1}{t} )$.   It follows that 
\begin{equation}
   \label{loggamma1}
    \log \Gamma \Big( \frac{1}{2} \Big(\frac{1}{2}+a+it +s \Big) \Big)
    -\log \Gamma \Big( \frac{1}{2} \Big(\frac{1}{2}+a+it \Big)  \Big)= \frac{s}{2} \log (  \tfrac{it}{2} ) + O \Big( \frac{|s|^2+1}{t} \Big). 
\end{equation}
Conjugating the above equation and replacing $a$ by $\overline{a}$, and $s$
by $\overline{s}$, yields
\begin{equation}
     \label{loggammaconjugate}
    \log \Gamma \Big( \frac{1}{2} \Big(\frac{1}{2}+a-it +s \Big) \Big)
    -\log \Gamma \Big( \frac{1}{2} \Big(\frac{1}{2}+a-it \Big)  \Big)= \frac{s}{2} \log (  -\tfrac{it}{2} ) + O \Big( \frac{|s|^2+1}{t} \Big). 
\end{equation}
Taking $a=a_j$ in \eqref{loggamma1} and $a=b_j$ in \eqref{loggammaconjugate} we find 
\begin{equation}
\begin{split}
    & \log \Gamma \Big( \frac{1}{2} \Big(\frac{1}{2}+a_j+it +s \Big) \Big)
    -\log \Gamma \Big( \frac{1}{2} \Big(\frac{1}{2}+a_j+it \Big)  \Big)+
     \log \Gamma \Big( \frac{1}{2} \Big(\frac{1}{2}+b_j-it +s \Big) \Big)
    -\log \Gamma \Big( \frac{1}{2} \Big(\frac{1}{2}+b_j-it \Big)  \Big) \\
    & =  s \log \Big(  \frac{t}{2}   \Big) + O \Big( \frac{|s|^2+1}{t} \Big). 
\end{split}
\end{equation}
Exponentiating and taking the product over $j =1,2,3$ yields 
\[
   g_{\I,\J}(s,t) = \prod_{j=1}^{3} \Big( \frac{t}{2} \Big)^{s} \Big(1 + O \Big( \frac{|s|^2+1}{t} \Big)\Big)
   = \Big( \frac{t}{2} \Big)^{3s} \Big(1 + O \Big( \frac{|s|^2+1}{t} \Big)\Big),
\]
since $0 \le \Re(s) \le A$ and $|\Im(s)| \le t^{\frac{1}{2}}$. 

Next we deal with the case $|\Im(s)| > t^{\frac{1}{2}}$.  In this range,  note that the $O$ term becomes larger
than 1.  Thus it suffices to establish that, for  $s=\sigma+iy$, $0 \le \sigma \le A$, and $|y| > t^{\frac{1}{2}}$
one has
\begin{equation}
  \label{gIJlargebd}
    g_{\I,\J}(s,t)  \ll_{A} y^{2} t^{3 \sigma-1+3\delta}. 
\end{equation}
We shall use repeatedly the Stirling estimate:  for $0 \le x \ll 1$ and $|y| \ge 1$, 
\begin{equation}
  \label{absoluteStirling}
  |\Gamma(x+iy)|  = (2 \pi)^{\frac{1}{2}}  |y|^{x-\frac{1}{2}} e^{-\frac{\pi |y|}{2}} ( 1 + O(|y|^{-1})). 
\end{equation}
Thus if $|y+t| \ge 1$, then, when $a \in \{ a_1, a_2, a_3 \}$,
\begin{align*}
   \Bigg| \frac{\Gamma ( \frac{1}{2} (\frac{1}{2}+a+it +s ) )}{ \Gamma ( \frac{1}{2} (\frac{1}{2}+a+it )  ) } \Bigg| &  \asymp
   \frac{ | \frac{t+a''+y}{2}|^{\frac{a'+\sigma}{2}-\frac{1}{4} }
   e^{-\frac{\pi}{4}|t+y+a''|  } }{|\frac{t+a''}{2}|^{-\frac{1}{4}} e^{-\frac{\pi (t+a'')}{4}  }} 
   \asymp 
    \frac{ |t+a''+y|^{\frac{a'+\sigma}{2}-\frac{1}{4} }
   e^{-\frac{\pi}{4}|t+y+a''|  } }{t^{-\frac{1}{4}} e^{-\frac{\pi t}{4}  }} 
\end{align*}
where $a=a'+i a''$.   Similarly, if $|y-t| \ge 1$ and $b \in \{ b_1, b_2, b_3 \}$, then 
\begin{equation*}
\begin{split}
      \Bigg| \frac{\Gamma ( \frac{1}{2} (\frac{1}{2}+b-it +s ) )}{ \Gamma ( \frac{1}{2} (\frac{1}{2}+b-it ) ) } 
       \Bigg| & \asymp
        \frac{  | b''+y-t |^{\frac{b'+\sigma}{2}-\frac{1}{4} }
   e^{-\frac{\pi}{4}|b''+y-t|  }}{|t|^{-\frac{1}{4}} e^{-\frac{\pi t}{4}  }}.
\end{split}
\end{equation*}
Thus if $|y-t| \ge 1$ and $|y+t| \ge 1$, these combine to give 
\begin{equation*}
\begin{split}
      \Bigg| \frac{\Gamma ( \frac{1}{2} (\frac{1}{2}+a+it +s ) )\Gamma ( \frac{1}{2} (\frac{1}{2}+b-it +s ) )}{ \Gamma ( \frac{1}{2} (\frac{1}{2}+a+it )  ) \Gamma ( \frac{1}{2} (\frac{1}{2}+b-it )  ) } 
       \Bigg| & \ll
        \frac{ |t+a''+y|^{\frac{a'+\sigma}{2}-\frac{1}{4} }  | b''+y-t |^{\frac{b'+\sigma}{2}-\frac{1}{4} }
      e^{-\frac{\pi}{4}( |t+y+a''|+|b''+y-t|)  } }{(|t|^{-\frac{1}{4}} e^{-\frac{\pi t}{4}  })^2} \\
      &  \ll
        \frac{ |t+y|^{\frac{a'+\sigma}{2}-\frac{1}{4} }  | y-t |^{\frac{b'+\sigma}{2}-\frac{1}{4} }
      e^{-\frac{\pi}{4}( |t+y|+|y-t|)  } }{|t|^{-\frac{1}{2}} e^{-\frac{\pi t}{2}  }}.
\end{split}
\end{equation*} 
In the case that $|y-t| \ge 1$ and $|y+t| \ge 1$, we apply this with 
 $a=a_j=a_j'+ia_j''$ and $b=b_j=b_j'+ib_j''$ to find that
\begin{equation}
\begin{split}
 |g_{\I,\J}(s,t)| & \ll 
  \prod_{j=1}^{3}
    \frac{ |y+t|^{\frac{a_j'+\sigma}{2}-\frac{1}{4}} |y-t|^{\frac{b_j'+\sigma}{2}-\frac{1}{4}} e^{-\frac{\pi}{4}( |t+y|+|y-t|)  } }{ t^{-\frac{1}{2}} 
   e^{-\frac{\pi t}{2}} }  \\
 \label{gIJbd2}
   & \ll  \prod_{j=1}^{3}
    \frac{ |y+t|^{\frac{\sigma}{2}-\frac{1}{4}+\frac{\delta}{2}} |y-t|^{\frac{\sigma}{2}-\frac{1}{4}+\frac{\delta}{2}} e^{-\frac{\pi}{4}( |t+y|+|y-t|)  } }{ t^{-\frac{1}{2}} 
   e^{-\frac{\pi t}{2}} }, 
\end{split}
\end{equation}
since we are assuming the bound \eqref{sizerestrictiondelta} for $\I$ and $\J$.
It follows (given that $\delta$ is positive) that if $y \ge t+1$ then
\begin{align*}
     |g_{\I,\J}(s,t)| & \ll (t^{\frac{1}{2}} (y+t)^{\frac{\sigma}{2}-\frac{1}{4}+\frac{\delta}{2}} 
     (y-t)^{\frac{\sigma}{2}-\frac{1}{4}+\frac{\delta}{2}} e^{-\frac{\pi}{2}(y-t)} )^3
     \ll t^{\frac{3}{2}} (r+t)^{\frac{3\sigma}{2}-\frac{3}{4}+\frac{3 \delta}{2}} 
     r^{\frac{3\sigma}{2}-\frac{3}{4}+ \frac{3\delta}{2}} e^{-\frac{3\pi r}{2}},
\end{align*}
where we have used that $y+t \asymp y$, and that $0 \le \sigma \le A$, and have made the variable
change $r=y-t$. By considering separately, the cases $1 \le r \le t$ and $r > t$, 
we obtain that $|g_{\I,\J}(s,t)| \ll (r+t)^2 t^{3\sigma-1}$
which establishes  \eqref{gIJlargebd} in the case $y \ge t+1$. 
For $y \in [\sqrt{t},t-1]$ and $\frac{1}{2} \le \sigma  \le A$, we have by \eqref{gIJbd2}
\begin{align*}
  |g_{\I,\J}(s,t)| & \ll 
   \Big( \frac{ (y+t)^{\frac{\sigma}{2}-\frac{1}{4}+\frac{\delta}{2}} (t-y)^{\frac{\sigma}{2}-\frac{1}{4}+\frac{\delta}{2}}e^{-\frac{\pi t}{2}} }{ t^{-\frac{1}{2}} 
   e^{-\frac{\pi t}{2}} } \Big)^3
   \ll \Big( \frac{t^{\sigma-\frac{1}{2}+\delta}}{t^{-\frac{1}{2}}} \Big)^3 = t^{3(\sigma+\delta)} \ll y^2 t^{3\sigma-1+3 \delta},
\end{align*}
since $y > \sqrt{t}$.  
 For $y \in [\sqrt{t},t-1]$ and $\sigma \in [0,\frac{1}{2})$, we obtain 
\begin{align} 
 \nonumber |g_{\I,\J}(s,t)| & \ll 
   \Big( \frac{ (y+t)^{\frac{\sigma}{2}-\frac{1}{4}+ \frac{\delta}{2}} (t-y)^{\frac{\sigma}{2}-\frac{1}{4}+\frac{\delta}{2}}e^{-\frac{\pi t}{2}} }{ t^{-\frac{1}{2}} 
   e^{-\frac{\pi t}{2}} } \Big)^3 \ll 
   \Big( \frac{t^{\frac{\sigma}{2}-\frac{1}{4}+\frac{\delta}{2}}(t-y)^{\frac{\sigma}{2}-\frac{1}{4}+\frac{\delta}{2}}}{t^{-\frac{1}{2}}} \Big)^3   \\
  \label{lastinequality} & \ll t^{\frac{3\sigma}{2}+\frac{3}{4}+\frac{3 \delta}{2}} (t-y)^{\frac{3 \sigma}{2}-\frac{3}{4}+\frac{3 \delta}{2}}
   \ll y^{2} t^{3\sigma-1+3 \delta}. 
\end{align} 
In the case that $\sigma \in [0,\frac{1}{2})$ this can be checked by considering the function $h(y) =y^2 (t-y)^{\frac{3}{4}-\frac{3\sigma}{2}}$
on the interval $[\sqrt{t},t-1]$.  
Elementary calculus shows that the minimum of $h$ on this interval is $\gg t^{\frac{7}{4}-\frac{3\sigma}{2}}$
and therefore \eqref{lastinequality} follows.  
Now, if $y \in [t-1,t+1]$ then, for $|b| \le \delta$, one has 
$| \frac{\Gamma ( \frac{1}{2} (\frac{1}{2}+b-it +s ) )}{ \Gamma ( \frac{1}{2} (\frac{1}{2}+b-it ) ) }| 
\ll_{A} \frac{1}{ |\Gamma ( \frac{1}{2} (\frac{1}{2}+b-it ) )| }  
\ll t^{\frac{1}{4}+\frac{\delta}{2}} e^{\frac{\pi t}{4}} $ 
(by \eqref{absoluteStirling}) and thus 
\begin{equation*}
\begin{split}
      \Bigg| \frac{\Gamma ( \frac{1}{2} (\frac{1}{2}+a+it +s ) )\Gamma ( \frac{1}{2} (\frac{1}{2}+b-it +s ) )}{ \Gamma ( \frac{1}{2} (\frac{1}{2}+a+it )  ) \Gamma ( \frac{1}{2} (\frac{1}{2}+b-it )  ) } 
       \Bigg| & \ll
        \frac{ | t+a''+y|^{\frac{\sigma}{2}-\frac{1}{4} +\frac{\delta}{2} }
   e^{-\frac{\pi}{2}|\frac{t+y+a''}{2}|    }}{ |t|^{-\frac{1}{4}} e^{-\frac{\pi t}{4}  }} 
   \cdot (t^{\frac{1}{4}+\frac{\delta}{2}} e^{\frac{\pi t}{4}})
   \ll t^{\frac{\sigma}{2} +\frac{1}{4}+\delta}
\end{split}
\end{equation*} 
for $|a|, |b| \le \delta$. 
Therefore $ |g_{\I,\J}(s,t)|  \ll (t^{\frac{\sigma}{2}+\frac{1}{4}+\delta})^3\ll y^2 t^{3 \sigma-1+3 \delta}$, since
$y \in [t-1,t+1]$ and $\sigma \ge 0$.  The cases for $y \le -\sqrt{t}$ are proven in a similar fashion. 
 \end{proof}

\subsection{Partial derivative bounds. Proof of Lemma \ref{Stirling} (iii)}

\begin{proof}
For $1 \le j \le 3$,  let $p_j(s,t)=\frac{\Gamma ( \frac{1}{2} (\frac{1}{2}+a_j+it +s ) )\Gamma ( \frac{1}{2} (\frac{1}{2}+b_j-it +s ) )}{ \Gamma ( \frac{1}{2} (\frac{1}{2}+a_j+it )  ) \Gamma ( \frac{1}{2} (\frac{1}{2}+b_j-it )  )}$, 
$  \theta_j(s,t) = \frac{d}{dt} \log p_j(s,t)$, and 
$\Theta(s,t) = \sum_{j=1}^{3} \theta_j(s,t)$.
Observe that 
\begin{equation}
    \label{gstderivative}
     \frac{d}{dt} g_{\I,\J}(s,t) = g_{\I,\J}(s,t) \Theta(s,t) 
\end{equation}
and more generally, for $i \ge 1$, 
\begin{equation}
   \label{gstithderivative}
   \frac{d^i}{dt^i} g_{\I,\J}(s,t) = \sum_{u+v=i-1} \binom{i-1}{u}   
   \frac{d^u}{dt^u} g_{\I,\J}(s,t) 
    \frac{d^v}{dt^v} \Theta(s,t).
\end{equation}
We shall now demonstrate that 
\begin{equation}
   \label{thetajderivatives}
      \frac{d^v}{dt^v} \theta_j(s,t)   \ll |s|t^{-v-1} \text{ for } j=1,2,3. 
\end{equation}
From this it follows directly that one has 
\begin{equation}
   \label{Thetaderivatives} 
        \frac{d^v}{dt^v} \Theta(s,t)   \ll |s|t^{-v-1}.
\end{equation}
Using these facts, we can prove the Lemma by induction. 
Observe that Lemma \ref{Stirling} (ii),  \eqref{Thetaderivatives} with $v=0$, and \eqref{gstderivative} imply
$\frac{d}{dt} g_{\I,\J}(s,t) \ll |s| T^{3 \varepsilon+3 \delta-1}$.  
This, together with Lemma  \ref{Stirling} (ii), establishes the Lemma in the cases $i=0$ and $i=1$.  
Now assume the inductive hypothesis that, for some $j \in \mathbb{N}$ one has $\frac{d^u}{dt^u} g_{\I,\J}(s,t) \ll |s|^u T^{3 \varepsilon+3 \delta-u}$ for $u \le j-1$. 
Combining this with \eqref{Thetaderivatives}  and \eqref{gstithderivative}, 
we obtain the case $i=j$ of the bound in \eqref{gstiderivatives}, and so (by induction) we find that 
\eqref{gstiderivatives} holds for
for all $i \ge 0$, $\Re(s)=\varepsilon$, and $|\Im(s)| \le \sqrt{T}$.  
Thus to complete our proof of Lemma \ref{Stirling} (iii) we must establish \eqref{thetajderivatives}. 
Note that 
\begin{align*}
  \theta_j(s,t)  = \frac{i}{2} & \Bigg(  
  \frac{  \Gamma' ( \frac{1}{2} (\frac{1}{2}+a_j+it +s ) )}{ \Gamma ( \frac{1}{2} (\frac{1}{2}+a_j+it+s )  )}
  -    \frac{  \Gamma' ( \frac{1}{2} (\frac{1}{2}+a_j+it  ) )}{ \Gamma ( \frac{1}{2} (\frac{1}{2}+a_j+it )  )}
  -  
  \frac{  \Gamma' ( \frac{1}{2} (\frac{1}{2}+b_j-it +s ) )}{ \Gamma ( \frac{1}{2} (\frac{1}{2}+b_j-it+s )  )}
  +    \frac{  \Gamma' ( \frac{1}{2} (\frac{1}{2}+b_j-it  ) )}{ \Gamma ( \frac{1}{2} (\frac{1}{2}+b_j-it )  )}
  \Bigg).
\end{align*}
However, we have the asymptotic expansion 
$\frac{\Gamma'}{\Gamma}(z) = \log z + O ( |z|^{-1} )$
and thus, when $y=\Im(s)$, 
\begin{align*}
    \theta_j(s,t) &  =  \frac{i}{2} \Big(- \log \Big(  \frac{1}{2} \Big(\frac{1}{2}+a_j+it \Big)  \Big)
    + \log \Big(  \frac{1}{2} \Big(\frac{1}{2}+a_j+it+s \Big) \Big)  \\
    & - \log \Big(  \frac{1}{2} \Big(\frac{1}{2}+b_j-it +s \Big)  \Big)
    + \log \Big(  \frac{1}{2} \Big(\frac{1}{2}+b_j-it \Big) \Big) \Big) +
     O(t^{-1})  \\
    & = \frac{i}{2} ( \log(t+y) - \log(t) - \log |y-t| + \log(t))+ O(t^{-1})  \\
   &   =\frac{i}{2} \Big( \log \Big(1+\frac{y}{t} \Big)  -\log \Big( 1- \frac{y}{t} \Big) \Big)+  O(t^{-1}) 
    \ll  \frac{1+|y|}{t}  \ll \frac{|s|}{t}
\end{align*}
since $|y| \le \sqrt{T}$ and $\Re(s) = \e>0$. We now study the derivatives of $\theta_j$. 
We have 
\begin{align*}
  \frac{d^v}{dt^v} \theta_j(s,t) =  \Big( \frac{i}{2} \Big)^{v+1}  & \Big( 
   \Big( \frac{\Gamma'}{\Gamma}\Big)^{(v)} \Big( \frac{1}{2} \Big(\frac{1}{2}+a_j+it +s \Big) \Big)  -    
    \Big( \frac{\Gamma'}{\Gamma}\Big)^{(v)} \Big( \frac{1}{2} \Big(\frac{1}{2}+a_j+it \Big) \Big) 
    \Big)
       \\
     & +  \Big( \Big( \frac{\Gamma'}{\Gamma}\Big)^{(v)}  \Big( \frac{1}{2} \Big(\frac{1}{2}+b_j-it +s \Big) \Big)  -    
    \Big( \frac{\Gamma'}{\Gamma}\Big)^{(v)} \Big( \frac{1}{2} \Big(\frac{1}{2}+b_j-it  \Big) \Big)  \Big)
     \Big( \frac{-i}{2} \Big)^{v+1}
    . 
\end{align*} 
It is known  (see \cite{AS})  that
$
  ( \frac{\Gamma'}{\Gamma} )^{(v)} (z) = \frac{(-1)^{v-1} (v-1)!}{z^v} + O 
  (\frac{1}{|z|^{v+1}} )$ for $\Re(z) > 0$ 
and thus
\begin{equation}
\begin{split}
   \label{dnuthetaj}
    \frac{d^v}{dt^v} \theta_j(s,t) = (-1)^{v-1} (v-1)! \Big( \frac{i}{2} \Big)^{v+1}  & \Bigg( 
    \frac{1}{ (\frac{1}{2} (\frac{1}{2}+a_j+it +s ) )^v} - \frac{1}{( \frac{1}{2} (\frac{1}{2}+a_j+it ) )^v} \\
    & + \frac{1}{( \frac{1}{2} (\frac{1}{2}+b_j-it +s ) )^v} - \frac{1}{( \frac{1}{2} (\frac{1}{2}+b_j-it  ) )^v}
    \Bigg) + O(t^{-v-1}). 
\end{split}
\end{equation}
Note that  $\alpha_j = \frac{\frac{1}{2} +a_j+it }{\frac{1}{2} +a_j+it +s }$ satisfies 
\[
  |\alpha_j^{-1} -1| = \frac{|s|}{|\frac{1}{2}+a_j+it|}
  < \frac{|s|}{t-\delta} \le \frac{ \sqrt{\varepsilon^2+T}}{t-\delta}
  < \frac{c_1}{ \sqrt{T}} < \frac{1}{2},
\]
where $c_1 >0$ and the last inequality holds 
when $T$ is sufficiently large.  This implies that $|\alpha_j^{-1}| > \frac{1}{2}$ 
and also $|1-\alpha_j| < \frac{4|s|}{t} < 1$, for $T$ sufficiently large.  From these inequalities and the binomial theorem
it follows that $\alpha_{j}^{\nu} -1 \ll_{\nu} \frac{|s|}{t}$ for $\nu \in \mathbb{N}$.   
Multiplying this last inequality by $ 2^{\nu}(\frac{1}{2}+a_j+it)^{-\nu} $ and using 
$|\frac{1}{2}+a_j+it| > t/2$, we obtain that the first two terms in the
brackets in  \eqref{dnuthetaj} sum to $\ll_{\nu} \frac{|s|}{t^{\nu+1}}$.  The final two terms in the brackets
may be treated similarly.  We may therefore deduce from  \eqref{dnuthetaj}  that 
$ \frac{d^v}{dt^v} \theta_j(s,t) \ll_{\nu}  \frac{|s|+1}{t^{\nu+1}}$. Since this shows that 
\eqref{Thetaderivatives} holds (given that we have $\Re(s) =\e >0$), the proof is now complete.
\end{proof}
\subsection{Proof of Lemma \ref{fpartials}}
Next we provide a proof of Lemma \ref{fpartials}. 
We aim to establish the following result: 
Let  $M \ll T^{\frac{3}{2}+\varepsilon}$, $N \asymp M$, $0 \ne r \ll \frac{M}{T_0} T^{\varepsilon}$, 
and $(x,y) \in [M,2M] \times [N,2N]$.  
Then 
\begin{equation}
   \label{fpartialsbd}
  x^m y^n f_{r}^{(m,n)}(x,y) \ll  T^{\frac{3}{2} \varepsilon} P^{n}
   \text{ where }
     P = \Big( \frac{T}{T_0} \Big) T^{\varepsilon}.
\end{equation}
\begin{proof} Recall that $f_r$ is defined by \eqref{fMN}.  That is,  $f_r(x,y) = W( \frac{x}{M} )
W ( \frac{y}{N} ) \phi(x,y)$
where
\[
 \phi(x,y) = 
\frac{1}{2 \pi i} \int_{(\varepsilon)} \frac{G(s)}{s} 
\Big( \frac{1}{\pi^3 xy} \Big)^s 
\frac{1}{T} \int_{-\infty}^{\infty} \Big( 
1+\frac{r}{y} \Big)^{-it} g_{\I, \J}(s,t) w(t) dt \, ds
\]
when $x,y>0$ (and is otherwise equal to $0$). 
In order to prove this lemma it suffices to establish
\begin{equation}
    \label{phipartialsbd}
      x^m y^n \phi^{(m,n)}(x,y) \ll  T^{\frac{3}{2} \varepsilon} P^{n} \text{ for } x \asymp M, y \asymp N \text{ where, }
     P = \Big( \frac{T}{T_0} \Big) T^{\varepsilon}.
\end{equation}
Indeed, it follows from \eqref{phipartialsbd} (via two applications of the generalized product rule) that one has:
\begin{equation*}
\begin{split}
  |f_{r}^{(m,n)}(x,y)| & =  \Big| \sum_{i_1+i_2=m} \binom{m}{i_1} W^{(i_1)}\Big( \frac{x}{M} \Big) M^{-i_1}
  \sum_{j_1+j_2=n}  \binom{n}{j_1} W^{(j_1)}\Big( \frac{y}{N} \Big) N^{-j_1} \phi^{(i_2,j_2)}(x,y) \Big| \\
  & \le  \sum_{i_1+i_2=m} 2^m O_{i_1}(1)  \Big( \frac{x}{2} \Big)^{-i_1} 
  \sum_{j_1+j_2=n} 2^n O_{j_1}(1)  \Big( \frac{y}{2} \Big)^{-j_1} \cdot x^{-i_2} y^{-j_2} \cdot
  O_{i_2,j_2}(T^{\frac{3}{2} \varepsilon} P^{j_2}) 
 \\                                                                                                                                                                                                              & =                                                                                                                                                                                                              \Big(  \sum_{i_1=0}^{m} \sum_{j_1=0}^{n} O_{i_1,j_1,m,n}(1) \cdot  P^{-j_1} \Big)
 T^{\frac{3}{2} \varepsilon}  P^{n}    x^{-m} y^{-n}
\end{split}
\end{equation*} 
where we have made use of $W(u)$ being zero when $u \ge 2$.
Since $P \ge 1$ (as follows by \eqref{cond3}, for all sufficiently large $T$),  this bound for $f_{r}^{(m,n)}(x,y)$ implies that in \eqref{fpartialsbd}.
We have now reduced the proof of the lemma to establishing \eqref{phipartialsbd}. 
We begin with 
\begin{equation}
  \label{phipartials}
  \phi^{(m,n)}(x,y) =
  \frac{1}{2 \pi i} \int_{(\varepsilon)} \frac{G(s)}{s} 
\Big( \frac{1}{\pi^3} \Big)^s 
\frac{1}{T} \int_{-\infty}^{\infty} 
   \frac{\partial^m}{\partial x^m} \frac{\partial^n}{\partial y^n}  \Big(x^{-s} y^{-s} \Big( 
1+\frac{r}{y} \Big)^{-it} \Big)  g_{\I,\J}(s,t) \omega(t) dt \, ds. 
\end{equation}
It suffices to compute the partial derivatives in the last inner integral. 
Observe that
\begin{equation}
  \label{Pks}
   \frac{d^k}{dx^k} x^{-s} = P_k(s) x^{-s-k} \text{ where }
  P_k(s) = 
  \begin{cases}
  1 & \text{if } k=0,  \\
  \prod_{j=0}^{k-1}(-s-j) & \text{if } k \in \mathbb{N}. 
  \end{cases}
\end{equation}
It suffices to determine $\frac{d^n}{dy^n}  y^{-s} ( 
1+\frac{r}{y} )^{-it}$ for $n \ge 0$.
It may be shown by a straightforward induction proof that 
\[
   \frac{d^k}{dy^k} \Big( 1 + \frac{r}{y} \Big)^{-it}
   = \Big( 1 + \frac{r}{y} \Big)^{-it} (y+r)^{-k} \sum_{m=1}^{k} 
   \binom{k}{m} P_{k-m}(m) P_m(-it) \Big( \frac{r}{y} \Big)^{m}.
\]
For $k \ge 1$, one gets (when  $y \asymp N \asymp M$, $t \asymp T$, and 
$1 \le |r|  \ll \frac{M}{T_0} T^{\varepsilon} =o(M)$ and $P = ( \frac{T}{T_0} ) T^{\varepsilon}
\ge 1$):
\begin{equation}
\begin{split}
  \label{derivativeoneplusryit}
\Big| \frac{d^k}{dy^k} \Big( 1 + \frac{r}{y} \Big)^{-it} \Big| & 
\le |y(1+o(1)|^{-k} \sum_{m=1}^{k} 2^k ((k-1)!) \cdot O_m (T^m) \cdot 
O_m \Big( \Big( \frac{P}{T} \Big)^m \Big)  \\
& \ll_k y^{-k} P^k \cdot \sum_{m=1}^{k} O_m(1) \ll_{k} \Big( \frac{P}{y} \Big)^k. 
\end{split}
\end{equation}
In the case $k=0$ we have the same estimate, since $|u^{-it}|=1 = (P/y)^{0}$ for $u >0$.  
By the generalized product rule,  \eqref{Pks}, and \eqref{derivativeoneplusryit}
\begin{equation*}
\begin{split}
 \frac{d^n}{dy^n} y^{-s} \Big( 1 + \frac{r}{y} \Big)^{-it}
 & = \sum_{j+k=n} \binom{n}{j} y^{-s-j} P_j(s)   \frac{d^k}{dy^k} \Big( 1 + \frac{r}{y} \Big)^{-it} 
 \ll \sum_{j+k=n}  \binom{n}{j}  N^{-\varepsilon-j} |s|^{j}  \Big( \frac{P}{y} \Big)^k \\
 & = N^{-\varepsilon} \Big( \frac{|s|}{N} + \frac{P}{y}  \Big)^n \ll N^{-n-\varepsilon}
 (|s|+P)^n
\end{split}
\end{equation*}
when $\Re(s) = \varepsilon$, $y \asymp N$, $t \asymp T$,  
$1 \le |r|  \ll \frac{M}{T_0} T^{\varepsilon} =o(M)$ and $P = ( \frac{T}{T_0} ) T^{\varepsilon}
\ge 1$, by \eqref{cond3}. 
By \eqref{Pks} it follows that 
\begin{equation*}
\begin{split}
   \frac{\partial^m}{\partial x^m} \frac{\partial^n}{\partial y^n}  \Big(x^{-s} y^{-s} \Big( 
1+\frac{r}{y} \Big)^{-it} \Big)  & \ll 
x^{-\varepsilon-m} |s|^m \cdot N^{-n-\varepsilon} (P+|s|)^n 
\ll M^{-2 \varepsilon} x^{-m} y^{-n} |s|^m(P+|s|)^n  \\
& \ll M^{-2 \varepsilon} (1+|s|)^{m+n} P^n x^{-m} y^{-n}. 
\end{split}
\end{equation*}
Using this,  along with $\Big|\frac{G(s)}{s}  \Big| \le \frac{|G(s)|}{\varepsilon}$ for $\Re(s)=
\varepsilon$, and 
\[
  |g_{\I,\J}(s,t)| \ll \Big( \frac{t}{2} \Big)^{3 \varepsilon} (1 + O(|s|^2+1))
  \ll (1+|s|)^2  t^{3 \varepsilon} \text{ for } \Re(s) = \varepsilon \text{ and }
  t \asymp T > 1
\]
(which follows trivially from Lemma \ref{Stirling} (ii))
we may bound \eqref{phipartials}.  The three bounds,
and \eqref{cond2}, \eqref{cond3},
 combine to give
\begin{equation*}
\begin{split}
      x^m y^n \phi^{(m,n)}(x,y) &  \ll_{\varepsilon}
      P^n  M^{-2 \varepsilon}
      \int_{(\varepsilon)} |G(s)|  \Big( \frac{1}{T} 
      \int_{-\infty}^{\infty}  (1+|s|)^{m+n+2}  T^{3 \varepsilon} |\omega(t)| dt \Big)   |ds| \\
      & \ll \Big( \frac{T^3}{M^2} \Big)^{\varepsilon} P^n 
      \text{ for } x \asymp M, y \asymp N \text{ and } m,n \in \mathbb{N} \cup \{ 0 \}. 
\end{split}
\end{equation*}
Finally, observe that $M \gg \frac{T_0}{T^{\varepsilon}} |r| \ge \frac{T_0}{T^{\varepsilon}} 
> T^{\frac{3}{4}}$ (by \eqref{cond1}). Hence $\frac{T^3}{M^2} \ll T^{3-\frac{3}{2}}
=T^{\frac{3}{2}}$ and we establish Lemma \ref{fpartials}. 
\end{proof}

%\section*{Appendix 2}
%\section{Improving the constants}
%Material is placed here as needed.

%%% AUTHOR: optional acknowledgments here
\section*{Acknowledgments} %%  you may comment this out if no Ackno
 I wish to express my deepest gratitude to the referee for  their 
 very  careful and meticulous reading of this manuscript.    
 Most significantly, the referee corrected Proposition 6.1
 and provided alternate proofs of Lemma \ref{fpartials} and Proposition \ref{ACidentities} (ii), (iii).  
 Furthermore, they provided  extensive comments, simplifications, and improvements.  Thank-you to Alia Hamieh,
 Quanli Shen, and Peng-Jie Wong for helpful discussions and for corrections. 
Also, thank-you to Olga Balkanova, Brian Conrey, Aleksandar Ivi\'{c}, Jon Keating, and Kannan Soundararajan for  
comments and references. 

%%% AUTHOR:
%%% Bibliography goes here. Note that the arXiv cannot process bibtex
%%% or biber bibliographies.  Example of acceptable bibliograpy format:
\bibliographystyle{amsplain}

%% AUTHOR: You can generate such a bibliography from a .bib file by 
%% running pdflatex/bibtex/pdflatex/pdflatex and then pasting the .bbl file
%% between \begin{thebibliography} and \end{bibliography}

%%% AUTHOR: Include a short description of each author following the
%%% structure below. Use the same short tags used previously.  
%%% Use \imageat{} and \imagedot{} instead of "@" and "." in
%%% email addresses-this replaces the symbols with graphics to avoid 
%%% e-mail address harvesting from the .pdf file
\begin{dajauthors}
\begin{authorinfo}[ng]
  Nathan Ng\\
  University of Lethbridge\\
  Lethbridge, Canada\\
  nathan.ng\imageat{}uleth\imagedot{}ca \\
  \url{https://www.cs.uleth.ca/~nathanng/}
\end{authorinfo}
%\begin{authorinfo}[johan]
%  Johan H{\aa}stad\\
%  Professor\\
%  KTH - Royal Institute of Technology\\
%  Stockholm, Sweden\\
%  johanh\imageat{}ktth\imagedot{}se \\
%  \url{http://www.csc.kth.se/~johanh}
%\end{authorinfo}
%\begin{authorinfo}[laci]
%  L\'aszl\'o Lov\'asz\\
%  Professor\\
%  E\"otv\"os Lor\'and University\\
%  Budapest, Hungary\\
%  laci\imageat{}comb\imagedot{}elte\imagedot{}hu\\
%  \url{http://www.cs.elte.hu/~lovasz}
%\end{authorinfo}
%\begin{authorinfo}[andy]
%  Andrew Chi-Chih Yao\\
%  Professor\\
%  etc.
%\end{authorinfo}
\end{dajauthors}

\end{document}